\definecolor{red}{rgb}{0.7,0,0}
\newtheorem{theorem}{Theorem}[section]
\newtheorem{lemma}[theorem]{Lemma}
\newtheorem{corollary}[theorem]{Corollary}
\newtheorem{proposition}[theorem]{Proposition}
\numberwithin{equation}{section}
\theoremstyle{definition}
\newtheorem{definition}[theorem]{Definition}
\newtheorem{remark}[theorem]{Remark}
\theoremstyle{remark}
\renewcommand{\eqref}[1]{(\ref{#1})}
\renewcommand{\bigskip}{\vspace{0.2cm}}
\begin{document}

\title{Local Muckenhoupt class for variable exponents}

\author{Toru Nogayama\footnote{Department of Mathematics, Tokyo Metropolitan University, 
Hachioji 192-0397, Tokyo, Japan Email:toru.nogayama@gmail.com}
\,and
Yoshihiro Sawano\footnote{Email:yoshihiro-sawano@celery.ocn.ne.jp  
}}

\date{}

\maketitle

\begin{abstract}
We define $A_{p(\cdot)}^{\rm loc}$ and show 
that the weighted inequality for local Hardy--Littlewood maximal operator 
on the Lebesgue spaces with variable exponent. 
This work will extend the theory 
of Rychkov, who developed the theory
of $A_p^{\rm loc}$ weights.
It will also extend the work
by 
Cruz-Uribe. SFO, Fiorenza
and Neugebaucer,
who considered the Muckenhoupt class
for Lebesgue spaces with variable exponents.
Due to the setting of variable exponents,
a new method of extension of weights
will be needed;
the extension method is different from
the one by Rychkov.
A passage to the vector-valued inequality is also done
by means of the extrapolation technique.
This technique is an adaptation of the work by
Cruz-Uribe and Wang.
We develop the theory of extrapolation adapted to our class of weights.
\end{abstract}

{\bf Key words} variable exponent Lebesgue spaces, Muckenhoupt class, extrapolation, vector-valued maximal inequality

{\bf 2010 Classification} 42B25, 42B35

\section{Introduction}

The goal of this paper is to develop
the theory of local Muckenhoupt weights
in the setting of variable exponents.
This paper will mix the results
obtained in \cite{CFN12,Rychkov01}.
However, it will turn out that we can not directly use the idea
of Rychkov \cite{Rychkov01}
due to the setting of variable exponents.

In this paper we use the following notation of variable exponents.
Let $p(\cdot)\,:\,\mathbb{R}^n\to[1,\infty)$ be a measurable function,
and let $w$ be a weight, that is, a measurable function which is positive almost everywhere.
Then, 
the weighted variable Lebesgue space $L^{p(\cdot)}(w)$
collects
all measurable functions $f$ such that
$\displaystyle
\int_{\mathbb{R}^n}
\left(\frac{|f(x)|}{\lambda}\right)^{p(x)}w(x)
{\rm d}x<\infty
$
for some $\lambda>0$.
For $f \in L^{p(\cdot)}(w)$,
the norm is defined by  
\[
\|f\|_{L^{p(\cdot)}(w)}
\equiv
\inf\left\{
\lambda>0\,:\,
\int_{\mathbb{R}^n}
\left(\frac{|f(x)|}{\lambda}\right)^{p(x)}w(x)
{\rm d}x\le1
\right\}.
\]
If $w\equiv1$, we write $\|\cdot\|_{L^{p(\cdot)}(1)}=\|\cdot\|_{p(\cdot)}$ and $L^{p(\cdot)}(1)=L^{p(\cdot)}$,
and we have the ordinary variable Lebesgue space $L^{p(\cdot)}$.

The definition of $L^{p(\cdot)}(w)$ 
slightly differs from
the one in \cite{CFN12},
where the authors considered
the theory of Muckenhoupt weights for
the Hardy--Littlewood maximal operator $M$
for Lebesgue spaces with variable exponents.
We also recall that Rychkov established
the theory of local Muckenhoupt class \cite{Rychkov01}.
Here $\mathcal{Q}$ denotes the set of all cubes
whose edges are parallel to coordinate axes.
We will mix the notions considered 
in \cite{CFN12,Rychkov01}
to define the local Muckenhoupt class as follows:
\begin{definition}
Given an exponent $p(\cdot)\,:\,\mathbb{R}^n \to [1,\infty)$ and a weight $w$, 
we say that $w \in A^{\rm loc}_{p(\cdot)}$ if 
$
[w]_{A^{\rm loc}_{p(\cdot)}} 
\equiv 
\sup\limits_{|Q|\le1} |Q|^{-1} \|\chi_Q\|_{L^{p(\cdot)}(w)}\|\chi_Q\|_{L^{p'(\cdot)}(\sigma)}<\infty,
$
where 
$\sigma \equiv w^{-\frac{1}{p(\cdot)-1}}$
and the supremum is taken over all cubes $Q \in \mathcal{Q}$.
When we are given a cube $Q$,
we analogously define
$A^{\rm loc}_{p(\cdot)}(Q)$
by restricting the cubes $R$ to the ones
contained in $Q$.

\end{definition}
If $p(\cdot)\equiv p$ is a constant exponent,
then $A_{p(\cdot)}^{\rm loc}$ coincides with
the class $A_p^{\rm loc}$ defined in \cite{Rychkov01}.
Using a different method from \cite{Rychkov01},
we seek to establish
that the local analogue of the result in \cite{CFN12}
is available:
Let $f$ be a measurable function.
We consider 
the local maximal operator given by
\[
M^{\rm loc}f(x)
\equiv
\sup_{Q \in \mathcal{Q}, |Q|\le1} \frac{\chi_{Q}(x)}{|Q|}\int_{Q}|f(y)| {\rm d}y
\quad (x \in {\mathbb R}^n).
\]
Needless to say, this is an analogue of the maximal operator given by
\[
Mf(x)
\equiv
\sup_{Q \in \mathcal{Q}} \frac{\chi_{Q}(x)}{|Q|}\int_{Q}|f(y)| {\rm d}y
\quad (x \in {\mathbb R}^n).
\]

For the boundedness of
$M$,
the following two conditions seem standard.
\begin{itemize}
\item[$(1)$]
The local log-H\"older continuity condition:
\begin{equation}\label{eq:190613-1}
{\rm LH}_0\,:\,|p(x)-p(y)|\le \frac{C}{-\log|x-y|}, 
\quad x,y \in \mathbb{R}^n, \quad |x-y|\le \frac12,
\end{equation}
\item[$(2)$]
The log-H\"older continuity condition at infinity:
there exists $p_\infty \in [0,\infty)$ such that
\begin{equation}\label{eq:190613-2}
{\rm LH}_\infty\,:\,|p(x)-p_\infty|\le \frac{C}{\log(e+|x|)}, 
\quad x \in \mathbb{R}^n.
\end{equation}
\end{itemize}

Keeping this in mind, we state
the main result in this paper. 

\begin{theorem}\label{thm:main}
Let $p(\cdot)\,:\,\mathbb{R}^n \to [1,\infty)$
 satisfy conditions 
$(\ref{eq:190613-1})$ and 
$(\ref{eq:190613-2})$
and $1<p_-
\equiv
{\rm essinf}_{x \in {\mathbb R}^n} p(x) \le p_+
\equiv
{\rm esssup}_{x \in {\mathbb R}^n} p(x) <\infty$.
Then given any $w \in A_{p(\cdot)}^{\rm loc}$,
\[
\|M^{\rm loc}f\|_{L^{p(\cdot)}(w)} \le C \|f\|_{L^{p(\cdot)}(w)}.
\]
\end{theorem}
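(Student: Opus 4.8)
The plan is to deduce the global estimate from a family of estimates on fixed bounded cubes, to which the weighted maximal inequality on variable Lebesgue spaces of Cruz-Uribe, Fiorenza and Neugebauer \cite{CFN12} can be brought to bear once the weight and the exponent have been suitably extended off the cube. By homogeneity it is enough to show that $\int_{\mathbb{R}^n}(M^{\rm loc}f(x))^{p(x)}w(x)\,dx\le C$ whenever $\int_{\mathbb{R}^n}|f(x)|^{p(x)}w(x)\,dx\le 1$, with $C$ depending only on $n$, on the structural data of $p(\cdot)$ (the constants in $(\ref{eq:190613-1})$, $(\ref{eq:190613-2})$, and $p_-$, $p_+$), and on $[w]_{A_{p(\cdot)}^{\rm loc}}$. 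We will also use freely the symmetry $w\in A_{p(\cdot)}^{\rm loc}\iff\sigma\in A_{p'(\cdot)}^{\rm loc}$, with comparable constants, which is immediate from the definition, together with its analogue on a fixed cube.

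First I would localize. Write $\mathbb{R}^n=\bigcup_{k\in\mathbb{Z}^n}Q_k$ with the pairwise disjoint unit cubes $Q_k=k+[0,1)^n$ and set $Q_k^{*}=3Q_k$ (the concentric triple). If $x\in Q_k$ and $Q\in\mathcal{Q}$ contains $x$ with $|Q|\le1$, then $Q\subseteq Q_k^{*}$, hence $M^{\rm loc}f(x)\le M(\chi_{Q_k^{*}}f)(x)$ for $x\in Q_k$. Since the $Q_k$ partition $\mathbb{R}^n$ and $\sum_k\chi_{Q_k^{*}}\equiv 3^n$,
\[
\int_{\mathbb{R}^n}(M^{\rm loc}f)^{p(x)}w\,dx\ \le\ \sum_{k\in\mathbb{Z}^n}\int_{Q_k}\big(M(\chi_{Q_k^{*}}f)\big)^{p(x)}w\,dx , \qquad \sum_{k\in\mathbb{Z}^n}\int_{Q_k^{*}}|f|^{p(x)}w\,dx\ =\ 3^n\int_{\mathbb{R}^n}|f|^{p(x)}w\,dx\ \le\ 3^n .
\]
Thus the theorem follows once we establish, \emph{uniformly in} $k$, the bounded-cube modular maximal inequality
\[
\int_{Q_k}\big(M(\chi_{Q_k^{*}}g)\big)^{p(x)}w\,dx\ \le\ C\int_{Q_k^{*}}|g|^{p(x)}w\,dx\ +\ C\int_{Q_k^{*}}(e+|x|)^{-np_-}\,dx ,
\]
because the last term summed over $k$ equals $3^n C\int_{\mathbb{R}^n}(e+|x|)^{-np_-}\,dx<\infty$ (here $np_->n$). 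Note $[w]_{A_{p(\cdot)}^{\rm loc}(Q_k^{*})}\le[w]_{A_{p(\cdot)}^{\rm loc}}$, so the Muckenhoupt constant is under uniform control cube by cube.

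The core step --- and the one where the variable-exponent setting forces a departure from Rychkov \cite{Rychkov01} --- is to \emph{extend} the pair $(w,p(\cdot))$ from $Q_k^{*}$ to all of $\mathbb{R}^n$: I want $W_k$ and $P_k(\cdot)$ on $\mathbb{R}^n$ with $W_k=w$ and $P_k(\cdot)=p(\cdot)$ on $Q_k^{*}$; with $P_k(\cdot)$ satisfying $(\ref{eq:190613-1})$ and $(\ref{eq:190613-2})$ and $1<(P_k)_-\le(P_k)_+<\infty$, all with constants independent of $k$; and with $W_k\in A_{P_k(\cdot)}$ in the global sense, with $[W_k]_{A_{P_k(\cdot)}}$ bounded independently of $k$. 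Granting this, the machinery behind the weighted maximal inequality of \cite{CFN12} --- run with $W_k$, $P_k(\cdot)$ in place of the global data and with the conclusion restricted to the sub-cube $Q_k\subseteq Q_k^{*}$, so that the $LH_\infty$ correction is the \emph{localized} quantity $\int_{Q_k^{*}}(e+|x|)^{-np_-}\,dx$ rather than a fixed constant --- produces precisely the bounded-cube estimate above (here the hypothesis $p_->1$ enters, being necessary for the global theory). In contrast to the constant-exponent reflection argument, the extension here must move the exponent as well: $LH_0$ has to be preserved across $\partial Q_k^{*}$, and both $W_k$ and $P_k(\cdot)$ must be glued to admissible global data far from $Q_k^{*}$. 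Condition $(\ref{eq:190613-2})$ is what makes this possible uniformly: when $|k|$ is large, $p(\cdot)$ is already within $C/\log(e+|k|)$ of the constant $p_\infty$ on $Q_k^{*}$, so $P_k(\cdot)$ may be taken equal to $p_\infty$ outside a fixed dilate of $Q_k^{*}$ at a cost of only $O(1/\log(e+|k|))$ to the $LH_0$ constant; the finitely many cubes near the origin are handled directly. Crucially, $W_k$ cannot simply be glued to the constant $1$ --- this would blow up the Muckenhoupt balance of test cubes straddling $\partial Q_k^{*}$ whenever $w$ is large there --- so $W_k$ is glued to a constant $c_k$ calibrated to the ``typical size'' of $w$ on $Q_k^{*}$, an expression built from $\|\chi_{Q_k^{*}}\|_{L^{p(\cdot)}(w)}$ rather than a naive average so as to respect the variable exponent, using the local reverse-H\"older property of $w\in A_{p(\cdot)}^{\rm loc}(Q_k^{*})$ and its dual counterpart for $\sigma$; this is the point at which our extension procedure differs from Rychkov's. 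I expect the verification that $W_k\in A_{P_k(\cdot)}$ with $[W_k]_{A_{P_k(\cdot)}}$ uniform in $k$ --- in particular the control of test cubes straddling the transition region and of test cubes of intermediate size inside $Q_k^{*}$ (those of measure between $1$ and $3^n$, not directly reached by the restriction $|Q|\le1$ in the definition of $A_{p(\cdot)}^{\rm loc}$) --- to be the main obstacle and the technical heart of the argument.

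Finally I would assemble the pieces: feeding the uniform bounded-cube modular estimate into the sum over $k$, together with $\sum_k\int_{Q_k^{*}}|f|^{p(x)}w\le 3^n$ and the summability of the tail, yields $\int_{\mathbb{R}^n}(M^{\rm loc}f)^{p(x)}w\,dx\le C$, whence the claimed norm inequality $\|M^{\rm loc}f\|_{L^{p(\cdot)}(w)}\le C'\|f\|_{L^{p(\cdot)}(w)}$ by the usual scaling argument. (An alternative organization, which the paper develops in any case for the vector-valued statement, is to first record the constant-exponent case $w\in A_{p_0}^{\rm loc}$ --- Rychkov's theorem --- and then run a \emph{local} Rubio de Francia extrapolation adapted to the class $A_{p(\cdot)}^{\rm loc}$; that route rests on the same weight-extension lemma.)
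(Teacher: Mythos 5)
Your overall architecture is genuinely close in spirit to the paper's: unit-scale localization, extension of the weight off the local cube by a ``constant'' calibrated from $\|\chi_{Q}\|_{L^{p(\cdot)}(w)}$ (this is essentially the paper's Lemma~\ref{lem:190626-1}, which sets $\overline{w}(x)=\|\chi_I\|_{L^{p(\cdot)}(w)}^{p(x)}$ off $I$ --- a choice that even spares one from modifying the exponent, since then $\|\chi_Q\|_{L^{p(\cdot)}(\overline{w})}=\|\chi_I\|_{L^{p(\cdot)}(w)}\|\chi_Q\|_{L^{p(\cdot)}}$ for $Q$ disjoint from $I$), followed by a CFN-type global weighted maximal theorem. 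However, the step you defer as ``the technical heart'' --- that the glued pair satisfies $W_k\in A_{P_k(\cdot)}$ with uniform constant --- is not merely delicate: it is false in general for ordinary cubes $Q_k^{*}$, for \emph{every} choice of the gluing constant $c_k$. Take $n=1$ and a weight $w\in A_2^{\rm loc}$ with $w(t)=|t-b|^{-1/2}$ near a boundary point $b$ of $Q_k^{*}$ (such a $w$ is fine for the local class, since on straddling intervals the singularity balances its own dual weight). For the glued weight, the test intervals $(b-\eps,b+\eps)$ give an $A_2$ ratio at least of order $(c_k\sqrt{\eps}\,)^{-1}\to\infty$: no constant profile on the outside can match the singular behaviour of $w$ just inside the boundary. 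This is exactly the phenomenon recorded in Remark~\ref{rem:190627-1}, and it is why the paper emphasizes that this extension technique works \emph{only} for dyadic maximal operators. So the central claim your proof rests on cannot be salvaged in the stated form; acknowledging the straddling cubes as an ``obstacle'' does not close the gap, because the obstacle is an actual counterexample rather than a technical nuisance.

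The missing ingredient is the paper's dyadic reduction, which removes straddling cubes altogether: one first dominates $M^{\rm loc}$ by $(M^{\rm loc}_{6^{-1}})^7$ and then by the finitely many dyadic local operators $M^{\rm loc}_{{\mathcal D}_{\bf a}}$ via the $3^n$-lattice trick; one localizes over \emph{unit dyadic} cubes using H\"asto's localization principle (Lemma~\ref{lem:190627-11}), noting $(M^{\rm loc}_{\mathfrak D}f)\chi_Q=(M_{\mathfrak D}[f\chi_Q])\chi_Q$; and since a dyadic cube is contained in, contains, or is disjoint from the unit dyadic cube $I$, the extension of Lemma~\ref{lem:190626-1} does land in $A_{p(\cdot)}({\mathfrak D})$, after which the global dyadic Theorem~\ref{thm:main2} (the CFN-type argument of Section~\ref{s3}) finishes the proof. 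Two further points. First, your assertion that running the argument of \cite{CFN12} with $(W_k,P_k)$ and restricting the conclusion to $Q_k$ produces the \emph{localized} error term $\int_{Q_k^{*}}(e+|x|)^{-np_-}\,dx$ would itself require reworking that proof: as it stands, its error terms are the global integrals $\int_{{\mathbb R}^n}w(x)(e+|x|)^{-K}\,dx$ and $\int_{{\mathbb R}^n}\sigma(x)(e+|x|)^{-K}\,dx$, which summed over $k$ are not obviously finite without a new argument. Second, the alternative route you mention (constant-exponent case plus a local Rubio de Francia extrapolation for $A^{\rm loc}_{p(\cdot)}$) is circular within this paper: Theorem~\ref{thm 191031-2} is proved \emph{using} the boundedness of $M^{\rm loc}$ on $L^{p(\cdot)}(w)$, i.e.\ using Theorem~\ref{thm:main} itself.
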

It is not difficult to show that
$w \in A_{p(\cdot)}^{\rm loc}$ is necessary
for the boundedness of $M^{\rm loc}$,
since
$M^{\rm loc}f(x) \ge \frac{1}{|Q|}\int_Q|f(y)|{\rm d}x$
for all cubes $Q$ with volume less than or equal to $1$ containing $x$.

We also remark that the matters are reduced to the estimate
of the following maximal function.
We consider 
the local maximal operator given by
\[
M^{\rm loc}_{6^{-1}}f(x)
\equiv
\sup_{Q \in \mathcal{Q}, |Q|\le6^{-n}} \frac{\chi_{Q}(x)}{|Q|}\int_{Q}|f(y)| {\rm d}y
\quad (x \in {\mathbb R}^n).
\]
for a measurable function $f$.
In fact, if we denote by $(M^{\rm loc}_{6^{-1}})^7$
the $7$-fold composition of $M^{\rm loc}_{6^{-1}}$,
then
$M^{\rm loc}f \le C(M^{\rm loc}_{6^{-1}})^7f$.
Before we go further,
we offer some words on the technique of the proof.
At first glance,
the proof of
Theorem \ref{thm:main}
also seems to be a reexamination
of the orginal theorem
\cite[Theorem 1.1]{CFN12}, which we recall below.

\begin{proposition} {\rm \cite[Theorem 1.1]{CFN12}} 
\label{prop:190627-1}
Suppose that we have a variable exponent $p(\cdot)\,:\,\mathbb{R}^n\to[1,\infty)$ such that $1\le p_{-}\le p_{+}<\infty$ and that $p(\cdot)$ satisfies
$(\ref{eq:190613-1})$
and
$(\ref{eq:190613-2})$. 
Then, the Hardy--Littlewood maximal operator $M$ satisfies the weak-type inequality
\[
\left\|t\chi_{\{x \in \mathbb{R}^n\,:\,Mf(x)>t\}}\right\|_{p(\cdot)} 
\le C\|f\|_{p(\cdot)}, \quad t>0.
\] 
If $p_{-}>1$, then it is bounded on $L^{p(\cdot)}$:
\[
\|Mf\|_{p(\cdot)} \le C\|f\|_{p(\cdot)}.
\]
\end{proposition}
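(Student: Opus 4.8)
The plan, which essentially reproduces the argument of \cite{CFN12}, is to pass to a modular formulation, isolate a pointwise averaging estimate encoding both log-H\"older conditions, and then run a Calder\'on--Zygmund covering argument. By homogeneity of both inequalities I would normalize $\|f\|_{p(\cdot)}\le 1$; by the unit-ball property of the norm $\|\cdot\|_{p(\cdot)}$ this is equivalent to $\int_{\R^n}|f(y)|^{p(y)}\,{\rm d}y\le 1$. It then suffices to prove $\int_{\{Mf>t\}}t^{p(x)}\,{\rm d}x\le C$ uniformly in $t>0$ for the weak-type inequality, and $\int_{\R^n}Mf(x)^{p(x)}\,{\rm d}x\le C$ for the strong-type inequality. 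No reduction of the operator is needed here, $M$ being already an uncentered maximal operator over cubes; the passage to $M^{\rm loc}_{6^{-1}}$-type operators will only matter later, in the proof of Theorem \ref{thm:main}.

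The heart of the matter is a \emph{key averaging estimate}: there exist a constant $C$, depending only on $n$, $p_\pm$ and the constants in \eqref{eq:190613-1}--\eqref{eq:190613-2}, and an exponent $m>n$, such that for every cube $Q$, every $x\in Q$ and every $f$ with $\int_{\R^n}|f(y)|^{p(y)}\,{\rm d}y\le 1$,
\[
\left(\frac{1}{|Q|}\int_Q|f(y)|\,{\rm d}y\right)^{p(x)}
\le C\left(\frac{1}{|Q|}\int_Q|f(y)|^{p(y)}\,{\rm d}y
+\frac{1}{|Q|}\int_Q\frac{{\rm d}y}{(e+|y|)^{m}}
+\frac{1}{(e+|x|)^{m}}\right).
\]
To prove it I would split $f=f\chi_{\{|f|\le 1\}}+f\chi_{\{|f|>1\}}$. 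On $\{|f|>1\}$ one has $|f(y)|\le|f(y)|^{p(y)}$, which produces a term of the first type. On $\{|f|\le 1\}$ one uses Jensen's inequality together with the two continuity conditions: \eqref{eq:190613-1} renders the oscillation of $p$ on any cube with $\ell(Q)\le 1$ harmless, through the boundedness of $|Q|^{p_-(Q)-p_+(Q)}$, while \eqref{eq:190613-2} allows one, on cubes with $\ell(Q)\ge 1$, to replace both $p(x)$ and $p(y)$ by $p_\infty$ up to an error absorbed into the majorant $(e+|\cdot|)^{-m}$. The constraint $m>n$, which keeps this majorant integrable, forces $p_\infty>1$; this holds since $p_\infty\ge p_->1$.

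Granting the key estimate, the weak-type bound follows by a Vitali covering argument. Splitting $f$ at height $t$ gives $\{Mf>t\}\subset\{M(f\chi_{\{|f|>t/2\}})>t/2\}$; one covers this set by cubes $Q_j$ of bounded overlap with $|Q_j|^{-1}\int_{Q_j}|f|\gtrsim t$, applies the key estimate on each $Q_j$ and integrates in $x$ over $Q_j$, and sums, obtaining
\[
\int_{\{Mf>t\}}t^{p(x)}\,{\rm d}x
\lesssim\sum_j\left(\int_{Q_j}|f(y)|^{p(y)}\,{\rm d}y+\int_{Q_j}\frac{{\rm d}y}{(e+|y|)^{m}}\right)
\lesssim\int_{\R^n}|f(y)|^{p(y)}\,{\rm d}y+\|(e+|\cdot|)^{-m}\|_{1}\lesssim 1,
\]
uniformly in $t$, after which the unit-ball property yields $\|t\chi_{\{Mf>t\}}\|_{p(\cdot)}\le C$. (In the borderline case $p_-=1$ one cannot take $m>n$, and a separate argument, closer to the classical weak $(1,1)$ bound for $M$, is needed.) Finally, for $p_->1$, I would obtain the strong-type inequality from the weak-type inequality applied to the exponent $p(\cdot)/s$ --- which still satisfies \eqref{eq:190613-1}--\eqref{eq:190613-2} and has lower bound $p_-/s>1$ --- for a fixed constant $s\in(1,p_-)$, interpolated with the trivial bound $\|Mf\|_\infty\le\|f\|_\infty$ via the Marcinkiewicz interpolation theorem in variable Lebesgue spaces, the interpolation parameter $\theta=1-1/s$ being constant.

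The step I expect to be the main obstacle is the key averaging estimate, and within it the behavior at infinity: one must make the local condition \eqref{eq:190613-1} and the condition at infinity \eqref{eq:190613-2} cooperate on a single cube $Q$ that need be neither small nor contained in a fixed ball, and choose the decaying majorant so that it is simultaneously integrable and compatible with the covering argument. This is precisely where $p_->1$ enters for the strong-type conclusion, while the weak-type conclusion survives down to $p_-=1$ only at the price of the separate argument just mentioned.
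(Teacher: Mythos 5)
You should first note that the paper never proves Proposition \ref{prop:190627-1}: it is quoted verbatim from \cite[Theorem 1.1]{CFN12}, and the closest argument actually written out in the paper is the proof of Theorem \ref{thm:main2}, which establishes the (weighted, dyadic) strong-type bound directly as a modular estimate via a sparse decomposition. Measured against the cited source, your treatment of the weak-type inequality is the standard one: normalize so that $\int_{\mathbb{R}^n}|f(y)|^{p(y)}\,{\rm d}y\le 1$, prove a pointwise averaging estimate whose error terms are $(e+|x|)^{-m}$ and the average of $(e+|y|)^{-m}$ (this is exactly the role played by Lemma \ref{lem:190411-11} in this paper), and run a covering argument. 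That part is fine in outline. One slip there: the assertion that ``$m>n$ forces $p_\infty>1$'' is not correct --- as in Lemma \ref{lem:190411-11}, the decay order of the majorant can be taken arbitrarily large independently of $p_\infty$ (the parameter $t\ge1$ is free), so integrability of the majorant is never the obstruction; consequently your stated weak-type argument should not need $p_->1$, and in any case the proposition claims the weak-type bound for $p_-=1$ as well, which you defer to an unspecified ``separate argument.'' As written, the full weak-type statement is therefore not proved.

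The more serious gap is the final step for $p_->1$: you deduce the strong-type bound from the weak-type bound at the exponent $p(\cdot)/s$ by invoking ``the Marcinkiewicz interpolation theorem in variable Lebesgue spaces.'' No such theorem is available off the shelf in the form you need: $L^{p(\cdot)}$ is not rearrangement invariant, its norm has no layer-cake representation in terms of the level sets $\{Mf>t\}$, and a weak-type hypothesis of the form $\|t\chi_{\{Mf>t\}}\|_{q(\cdot)}\le C\|f\|_{q(\cdot)}$ at a single variable exponent does not feed into the classical real-interpolation machinery (real interpolation of variable Lebesgue spaces does not reproduce $L^{p_\theta(\cdot)}$ in general; only complex interpolation behaves well). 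The proofs in \cite{CFN12,CFMP06}, like the proof of Theorem \ref{thm:main2} in this paper, avoid interpolation altogether: after splitting $f$ at height $1$ (resp.\ at height $\sigma$ in the weighted case), one bounds the modular $\int_{\mathbb{R}^n}Mf(x)^{p(x)}\,{\rm d}x$ directly, applying the key pointwise estimate with the exponent $p(\cdot)/s$ to the large part of $f$ and then using the boundedness of $M$ on the \emph{constant}-exponent space $L^{s}$, $1<s\le p_-$ (and on $L^{p_\infty}$ for the small part); this is precisely where $p_->1$ enters. Either replace your interpolation step by this direct modular argument, or state and prove the interpolation lemma you are relying on --- as it stands, that step is a genuine gap.
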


However, as
the example of  $w(x)=\exp(|x|)$ shows,
inequality
\[
\int_{{\mathbb R}^n}\frac{w(x)dx}{(e+|x|)^K},
\int_{{\mathbb R}^n}\frac{\sigma(x)dx}{(e+|x|)^K}<\infty,
\]
which is used in the proof of \cite[Theorem 1.1]{CFN12},
fails
for local Muckenhoupt weights for variable exponents.
So, we can not use the proof of \cite{CFN12} naively
for the local Muckenhoupt class.
This observation will lead us to
the technique of Rychkov,
who gave a method of creating global weights
from a given local weights.

Next, we consider
why the technique
employed by Rychkov \cite{Rychkov01}
does not work directly.
To simplify the matters,
we work in ${\mathbb R}$.
In \cite{Rychkov01},
Rychkov considered a symmetric extension
of weights.
More precisely,
given an interval $I$ and a weight $w$ on $I$,
Rychkov defined a weight $w_I$ on an interval $J$ adjacent to $I$
mirror-symmetrically with respect to the contact point
in $I \cap J$.
We repeat this procedure to define a weight $w_I$
on ${\mathbb R}$.
We can not employ this method since
we can not extend the variable exponents
mirror-symmetrically.
For example, if the weight $w$ satisfies
$w(t)=|t|^{-\frac{1}{3}}$ on $(-2,2)$
and
the expoenent $p(\cdot)$ satisfies
$p(t)=2$ on $(-1,1)$ and
$p(t)=\frac43$ on $(3,5)$,
then
the weight $w_{(-2,2)}$ defined
mirror-symmetrically from $w|_{(-2,2)}$
does not satisfy $\sigma=w_{(-2,2)}{}^{-\frac{1}{p(\cdot)-1}} \in L^{1}_{\rm loc}(2,6)$. 

To overcome these issues,
we will need two devices.
One device is well known.
We will fix a dyadic grid
${\mathcal D}_{k,{\bf a}}$,
$k \in {\mathbb Z}$ and ${\bf a} \in \{0,1,2\}^n$.
More precisely,
we let
\[
{\mathcal D}_{k,a}^0
\equiv
\begin{cases}
\{[m \cdot 2^k+3^{-1}(a-2^{k+1}),
m \cdot2^k+3^{-1}(a+2^k)\,:\,m \in {\mathbb Z}\}&(k\mbox{ is even}),\\
\{[m \cdot 2^k+3^{-1}(a-2^k),m \cdot 2^k+3^{-1}(a+2^{k+1}))\,:\,
m \in {\mathbb Z}\}&(k\mbox{ is odd})
\end{cases}
\]
for
$k \in {\mathbb Z}$ and $a=0,1,2$,
and consider
\[
{\mathcal D}_{k,{\bf a}}
\equiv
\{Q_1 \times Q_2 \times \cdots \times Q_n\,:\,
Q_j \in {\mathcal D}_{k,a_j}^0\}
\]
for
$k \in {\mathbb Z}$ and ${\bf a}=(a_1,a_2,\ldots,a_n) \in \{0,1,2\}^n$.
A dyadic grid in this paper is the family
$\mathcal{D}_{\bf a} \equiv \bigcup\limits_{k \in {\mathbb Z}}{\mathcal D}_{k,{\bf a}}$
for ${\bf a} \in \{0,1,2\}^n$.
Thanks to the $3^n$ lattice theorem
\cite{LeNa18},
we can reduce the matters to
the local maximal operator generated
by ${\mathcal D}$ given by
\[
M^{\rm loc}_{\mathcal D}f(x)
\equiv
\sup_{Q \in {\mathcal D}, |Q|\le1} \frac{\chi_{Q}(x)}{|Q|}\int_{Q}|f(y)| {\rm d}y
\quad (x \in {\mathbb R}^n)
\]
for a measurable function $f$
and a dyadic grid
${\mathcal D} \in \{{\mathcal D}_{{\bf a}}\,:\, {\bf a} \in \{0,1,2\}^n\}$.
In fact, we have
\[
M^{\rm loc}_{6^{-1}}f(x)\le C
\sum_{{\bf a} \in \{0,1,2\}^n}
M^{\rm loc}_{{\mathcal D}_{\bf a}}f(x) \quad (x \in \mathbb{R}^n).
\]
Here and below,
due to the similarity,
we suppose
${\bf a}=(1,1,\ldots,1)$.
We abbreviate
${\mathcal D}_{(1,1,\ldots,1)}$ to ${\mathfrak D}$.
Other values of ${\bf a}$ can be handled similarly.

Since we reduced the matters to
a dyadic grid ${\mathfrak D}$,
it is natural to define the class
$A_{p(\cdot)}^{\rm loc}({\mathfrak D})$.
\begin{definition}
Given an exponent $p(\cdot)\,:\,\mathbb{R}^n \to [1,\infty)$ and a weight $w$, 
we say that $w \in A^{\rm loc}_{p(\cdot)}({\mathfrak D})$ if 
\[
[w]_{A^{\rm loc}_{p(\cdot)}({\mathfrak D})} 
\equiv 
\sup_{Q \in {\mathfrak D}, |Q|\le1} 
|Q|^{-1} \|\chi_Q\|_{L^{p(\cdot)}(w)}\|\chi_Q\|_{L^{p'(\cdot)}(\sigma)}<\infty,
\]
where $\sigma \equiv w^{-\frac{1}{p(\cdot)-1}}$
and the supremum is taken over all cubes $Q \in {\mathfrak D}$
with $|Q|\le1$.
\end{definition}
In analogy to Theorem \ref{thm:main},
we can prove the following theorem:
\begin{theorem}\label{thm:main3}
Let $p(\cdot)\,:\,\mathbb{R}^n \to [1,\infty)$
 satisfy conditions 
$(\ref{eq:190613-1})$ and 
$(\ref{eq:190613-2})$.
If $1<p_- \le p_+ <\infty$, then,
for any $w \in A_{p(\cdot)}^{\rm loc}({\mathfrak D})$,
there exists a constant $C>0$ such that
\[
\|M^{\rm loc}_{{\mathfrak D}}f\|_{L^{p(\cdot)}(w)} \le C \|f\|_{L^{p(\cdot)}(w)}
\]
for any $f \in L^{p(\cdot)}(w)$.
\end{theorem}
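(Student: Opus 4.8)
The plan is to exploit the dyadic structure to reduce the bound to a uniform family of estimates, one on each unit cube, to prove these by extending the given \emph{local} weight to a genuine \emph{global} Muckenhoupt weight (this is where Rychkov's mirror reflection is replaced), and finally to reassemble --- the reassembly being possible exactly because $(\ref{eq:190613-2})$ forces $p(\cdot)$ to be almost constant at infinity. Since $\mathfrak D$ is a dyadic grid, the cubes $Q^\nu\in\mathfrak D$ of volume $1$ partition $\mathbb R^n$ and every $Q\in\mathfrak D$ with $|Q|\le1$ lies in exactly one $Q^\nu$; writing $M^{\mathfrak D}_{Q^\nu}$ for the maximal operator built from the members of $\mathfrak D$ contained in $Q^\nu$, we get $\chi_{Q^\nu}M^{\rm loc}_{\mathfrak D}f=\chi_{Q^\nu}M^{\mathfrak D}_{Q^\nu}(f\chi_{Q^\nu})$ for every $\nu$. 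It therefore suffices to establish (i) $\|\chi_{Q^\nu}M^{\mathfrak D}_{Q^\nu}(f\chi_{Q^\nu})\|_{L^{p(\cdot)}(w)}\le C\|f\chi_{Q^\nu}\|_{L^{p(\cdot)}(w)}$ with $C$ independent of $\nu$, and (ii) that these local estimates sum to the global one. (As usual one first reduces to $f$ bounded and compactly supported so that the averages defining the operator are finite.)

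For (i), I would attach to each unit cube $Q^\nu$ a global exponent $\bar p_\nu$ and a global weight $\bar w_\nu$ with: $\bar p_\nu=p(\cdot)$ and $\bar w_\nu=w$ on $Q^\nu$; $\bar p_\nu$ satisfying $(\ref{eq:190613-1})$ and $(\ref{eq:190613-2})$ with limit $p_\infty$ and with constants depending only on those of $p(\cdot)$; $p_-\le(\bar p_\nu)_-\le(\bar p_\nu)_+\le p_+$; and $\bar w_\nu$ lying in the global dyadic Muckenhoupt class $A_{\bar p_\nu(\cdot)}(\mathfrak D)$ with $[\bar w_\nu]_{A_{\bar p_\nu(\cdot)}(\mathfrak D)}\le C\,[w]_{A^{\rm loc}_{p(\cdot)}(\mathfrak D)}$, $C$ independent of $\nu$. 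Concretely, keep $p(\cdot)$ and $w$ on $Q^\nu$, let $\bar p_\nu$ interpolate from $p(\cdot)$ to the \emph{constant} $p_\infty$ across $3Q^\nu\setminus Q^\nu$ and equal $p_\infty$ outside $3Q^\nu$ --- on $3Q^\nu$ the interpolated values stay within $C/\log(e+|x|)$ of $p_\infty$ by $(\ref{eq:190613-2})$, and the transition is slow enough to keep $(\ref{eq:190613-1})$ --- and continue $\bar w_\nu$ from $w|_{Q^\nu}$, through a controlled transition on a dilate of $Q^\nu$, to a suitable constant $c_\nu$ far out, $c_\nu$ being a logarithmic average of $w$ over $Q^\nu$ chosen so that $\|\chi_{Q^\nu}\|_{L^{\bar p_\nu(\cdot)}(\bar w_\nu)}\|\chi_{Q^\nu}\|_{L^{\bar p_\nu'(\cdot)}(\bar\sigma_\nu)}\approx1$. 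I expect the uniform verification of $\bar w_\nu\in A_{\bar p_\nu(\cdot)}(\mathfrak D)$ to be \emph{the main obstacle}: for dyadic $Q\subseteq Q^\nu$ the Muckenhoupt ratio is controlled directly by the hypothesis (together with $w,\sigma\in L^1_{\rm loc}$, which it forces); for dyadic $Q\supsetneq Q^\nu$ one uses that $\bar w_\nu$ is essentially the constant $c_\nu$ and $\bar p_\nu$ essentially $p_\infty$ on the bulk of $Q$, together with the $Q=Q^\nu$ instance of the hypothesis to compare $c_\nu$ with $w$ on $Q^\nu$; the finitely many intermediate scales are handled using that log-H\"older exponents are locally almost constant and doubling. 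Granting this, the weighted variable-exponent maximal inequality for global Muckenhoupt weights \cite{CFN12} (applied to the dyadic operator $M^{\mathfrak D}$, for which the dyadic Muckenhoupt condition suffices), whose constant depends only on $n$, $p_\pm$, the log-H\"older constants and $[\bar w_\nu]_{A_{\bar p_\nu(\cdot)}(\mathfrak D)}$, bounds $M^{\mathfrak D}$, hence $M^{\mathfrak D}_{Q^\nu}$, on $L^{\bar p_\nu(\cdot)}(\bar w_\nu)$ uniformly in $\nu$; restricting to $Q^\nu$, where $\bar p_\nu$ and $\bar w_\nu$ coincide with $p(\cdot)$ and $w$, gives (i).

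For (ii), normalize $\|f\|_{L^{p(\cdot)}(w)}\le1$ and set $\rho_\nu(h)=\int_{Q^\nu}|h(x)|^{p(x)}w(x)\,dx$, so that $\sum_\nu\rho_\nu(f)\le1$. Split the indices into a finite set $F$ --- those $\nu$ with $|x_{Q^\nu}|$ below a threshold determined by $p_+-p_-$ and the constant in $(\ref{eq:190613-2})$ --- and its complement. Over $\bigcup_{\nu\in F}Q^\nu$ there are finitely many pieces, so the triangle inequality in $L^{p(\cdot)}(w)$ and (i) bound that part of $M^{\rm loc}_{\mathfrak D}f$ by a constant times $\|f\|_{L^{p(\cdot)}(w)}$. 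If $\nu\notin F$, then by $(\ref{eq:190613-2})$ the oscillation of $p(\cdot)$ on $Q^\nu$ is at most $C/\log(e+|x_{Q^\nu}|)$, hence small; consequently on $Q^\nu$ the norm $\|\cdot\|_{L^{p(\cdot)}(w)}$ is comparable, with constant tending to $1$, to a fixed power of $\rho_\nu(\cdot)$, and (i) upgrades to the modular estimate $\rho_\nu(M^{\mathfrak D}_{Q^\nu}(f\chi_{Q^\nu}))\le C\,\rho_\nu(f)$ with $C$ independent of $\nu$. Summing over $\nu\notin F$ and using $\sum_\nu\rho_\nu(f)\le1$ shows the remaining part of $M^{\rm loc}_{\mathfrak D}f$ has modular at most $C$, hence norm at most a constant. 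Combining the two parts yields $\|M^{\rm loc}_{\mathfrak D}f\|_{L^{p(\cdot)}(w)}\le C\|f\|_{L^{p(\cdot)}(w)}$. This reassembly step, far from a formality, is precisely where $(\ref{eq:190613-2})$ is indispensable: without it $p(\cdot)$ could oscillate by a fixed amount on infinitely many unit cubes, and $M^{\rm loc}_{\mathfrak D}$ would fail to be bounded even for $w\equiv1$.
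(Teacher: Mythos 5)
Your skeleton is the paper's: restrict to the unit cubes $Q^\nu$ of the grid, use $\chi_{Q^\nu}M^{\rm loc}_{\mathfrak D}f=\chi_{Q^\nu}M_{\mathfrak D}(f\chi_{Q^\nu})$, get a uniform bound on each $Q^\nu$ by extending the local weight to a global dyadic Muckenhoupt weight, then reassemble. But both key steps are left at the level of expectation, and as sketched they do not go through. For step (i): you extend $w$ to a genuine constant $c_\nu$ far out and morph the exponent to $p_\infty$ across $3Q^\nu\setminus Q^\nu$. For $w\in A^{\rm loc}_{p(\cdot)}(\mathfrak D)$ the size $\|\chi_{Q^\nu}\|_{L^{p(\cdot)}(w)}$, hence your $c_\nu$, can grow exponentially in $|x_{Q^\nu}|$ (the paper's own example $w(x)=e^{|x|}$), and on dyadic cubes meeting the transition zone the exponent still oscillates by about $1/\log(e+|x_{Q^\nu}|)$; a weight of size $c_\nu$ on a region where $1/\bar p_\nu(\cdot)$ oscillates by $\delta$ contributes a factor of order $c_\nu^{\,\delta}\sim\exp(C|x_{Q^\nu}|/\log(e+|x_{Q^\nu}|))$ to the Muckenhoupt product, which is not bounded uniformly in $\nu$. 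So the ``main obstacle'' you flag is a genuine one for your construction. The paper's Lemma \ref{lem:190626-1} avoids it by leaving $p(\cdot)$ untouched and extending by $\overline w(x)=(\|\chi_{Q^\nu}\|_{L^{p(\cdot)}(w)})^{p(x)}$ off $Q^\nu$: for a weight of the form $c^{p(x)}$ one has $\|\chi_E\|_{L^{p(\cdot)}(c^{p(\cdot)})}=c\,\|\chi_E\|_{L^{p(\cdot)}}$ and $\overline\sigma=c^{-p'(x)}$ gives $\|\chi_E\|_{L^{p'(\cdot)}(\overline\sigma)}=c^{-1}\|\chi_E\|_{L^{p'(\cdot)}}$, so the large constant cancels exactly and a three-case check yields $[\overline w]_{A_{p(\cdot)}(\mathfrak D)}\lesssim[w]_{A^{\rm loc}_{p(\cdot)}(\mathfrak D)}$ uniformly in $\nu$.

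For step (ii), the claimed upgrade of the uniform norm bound to a uniform modular bound on far cubes is false. Write $t_\nu=\|f\chi_{Q^\nu}\|_{L^{p(\cdot)}(w)}\le1$ and $\rho_\nu$ for the modular on $Q^\nu$. The norm bound only gives $\rho_\nu\bigl(M_{\mathfrak D}(f\chi_{Q^\nu})\bigr)\le (A t_\nu)^{p_-(Q^\nu)}$ when $At_\nu\le1$, while $\rho_\nu(f)$ may be as small as $t_\nu^{p_+(Q^\nu)}$; the quotient then contains $t_\nu^{-(p_+(Q^\nu)-p_-(Q^\nu))}$, and although $p_+(Q^\nu)-p_-(Q^\nu)\lesssim 1/\log(e+|x_{Q^\nu}|)$, nothing prevents $t_\nu$ from being as small as $\exp(-\log^2(e+|x_{Q^\nu}|))$, which makes this factor at least $(e+|x_{Q^\nu}|)^{c}$ and hence unbounded over $\nu$. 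In other words, small oscillation does not make the norm comparable ``with constant tending to $1$'' to a fixed power of the modular; the comparison degrades exactly when the local modular is tiny, which is the typical situation since $\sum_\nu\rho_\nu(f)\le1$. Repairing this requires either the $(e+|x|)^{-K}$ error-term mechanism of Lemma \ref{lem:190411-11}, or what the paper actually does: apply H\"{a}st\"{o}'s localization principle (Lemma \ref{lem:190627-11}) to $(M^{\rm loc}_{\mathfrak D}f)\,w^{1/p(\cdot)}$ and to $f\,w^{1/p(\cdot)}$, which converts the uniform per-cube norm bounds directly into the global bound through the $\ell^{p_\infty}$-sum of local norms, with no per-cube modular estimate at all.
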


The second device is a new local/global strategy.
To prove Theorem \ref{thm:main3}
dealing with local dyadic Muckenhoupt weights,
we consider dyadic Muckenhoupt weights.
\begin{definition}
Given an exponent $p(\cdot)\,:\,\mathbb{R}^n \to [1,\infty)$ and a weight $w$, 
we say that $w \in A_{p(\cdot)}({\mathfrak D})$ if 
\[
[w]_{A_{p(\cdot)}({\mathfrak D})} 
\equiv 
\sup_{Q \in {\mathfrak D}} |Q|^{-1} \|\chi_Q\|_{L^{p(\cdot)}(w)}\|\chi_Q\|_{L^{p'(\cdot)}(\sigma)}<\infty,
\]
where $p'(\cdot)$ is the conjugate exponent of $p(\cdot)$, 
$\sigma \equiv w^{-\frac{1}{p(\cdot)-1}}$
and the supremum is taken over all cubes $Q \in {\mathfrak D}$.
\end{definition}
In this paper,
we propose a new method of creating globally regular weight
$w_Q \in A_{p(\cdot)}({\mathfrak D}), Q \in {\mathfrak D}$
from a weight $w \in A_{p(\cdot)}^{\rm loc}({\mathfrak D})$
in Lemma \ref{lem:190626-1}.
As we will see, this technique is valid
only for the dyadic maximal operator;
see Remark \ref{rem:190627-1}.
In addition to the local/global strategy
different from the one Rychkov,
we will use  the localization principle
due to H\"{a}sto
\cite[Theorem~2.4]{Hasto2009}.
In analogy to Theorem \ref{thm:main},
we can prove the following theorem:
\begin{theorem}\label{thm:main2}
Let $p(\cdot)\,:\,\mathbb{R}^n \to [1,\infty)$
 satisfy conditions 
$(\ref{eq:190613-1})$ and 
$(\ref{eq:190613-2})$.
If $1<p_- \le p_+ <\infty$, then given any $w \in A_{p(\cdot)}({\mathfrak D})$,
there exists a constant $C>0$ such that
\[
\|M_{{\mathfrak D}}f\|_{L^{p(\cdot)}(w)} \le C \|f\|_{L^{p(\cdot)}(w)}
\]
for any measurable function $f$.
\end{theorem}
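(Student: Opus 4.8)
The plan is to adapt the proof of Proposition~\ref{prop:190627-1} from \cite{CFN12} to the dyadic weighted setting. Recall that the obstruction pointed out after Proposition~\ref{prop:190627-1} — the possible divergence of $\int_{\mathbb{R}^n}(e+|x|)^{-K}w\,{\rm d}x$ and $\int_{\mathbb{R}^n}(e+|x|)^{-K}\sigma\,{\rm d}x$ — was an obstruction \emph{only} for the local class; the decisive point here is that a weight in the \emph{global} dyadic class $A_{p(\cdot)}({\mathfrak D})$ is forced to have at most polynomial growth, so that those two integrals converge for $K$ large, and the argument of \cite{CFN12} can then be run with the necessary modifications.

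First I would make the standard reductions. By homogeneity of $\|\cdot\|_{L^{p(\cdot)}(w)}$ and the usual equivalence between norm and modular inequalities, it suffices to produce $C>0$ such that $\int_{\mathbb{R}^n}\bigl(M_{{\mathfrak D}}f(x)/C\bigr)^{p(x)}w(x)\,{\rm d}x\le1$ for every $f\ge0$ with $\int_{\mathbb{R}^n}f(x)^{p(x)}w(x)\,{\rm d}x\le1$; a density argument then lets us assume $f$ is bounded with bounded support, so that every quantity below is finite. It is also useful to record the variable-exponent H\"older inequality in the form $|Q|^{-1}\int_Q f\le C\,|Q|^{-1}\|f\chi_Q\|_{L^{p(\cdot)}(w)}\|\chi_Q\|_{L^{p'(\cdot)}(\sigma)}$, valid because $\sigma^{1/p'(\cdot)}=w^{-1/p(\cdot)}$; thus the $A_{p(\cdot)}({\mathfrak D})$ condition directly controls every dyadic average $|Q|^{-1}\int_Q f$ by the $L^{p(\cdot)}(w)$-averaged size of $f$ over $Q$.

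The new ingredient is the growth estimate. Testing the $A_{p(\cdot)}({\mathfrak D})$ condition on a large dyadic cube $Q$ and bounding $\|\chi_Q\|_{L^{p'(\cdot)}(\sigma)}$ from below by a fixed unit dyadic subcube gives $\|\chi_Q\|_{L^{p(\cdot)}(w)}\le C|Q|$; since $1<p_-\le p_+<\infty$, comparing the norm and the modular on the single cube $Q$ turns this into $w(Q)\le C|Q|^{p_+}$, and the symmetric computation (exchanging $w,p(\cdot)$ with $\sigma,p'(\cdot)$, which is legitimate because $p_->1$ forces $(p')_+<\infty$) gives $\sigma(Q)\le C|Q|^{(p')_+}$. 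Summing over dyadic shells then yields the two tail integrability statements. With these in hand the remainder proceeds as in \cite[Theorem~1.1]{CFN12}: one compares $p(\cdot)$ to the constant $p_\infty$ at infinity via $\eqref{eq:190613-2}$ and to its local values via $\eqref{eq:190613-1}$, invokes the pointwise estimates from \cite{CFN12} together with the classical boundedness of the dyadic maximal operator on the constant-exponent spaces $L^{p_0}(\mathbb{R}^n)$ for suitable $p_0\in(1,\infty)$, and uses H\"{a}sto's localization principle \cite[Theorem~2.4]{Hasto2009} to pass from the unit-scale estimates, where $p(\cdot)$ is comparable to a constant by $\eqref{eq:190613-1}$, to the global bound; the averages produced along the way are handled by the H\"older inequality recorded above.

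The hard part is the interface between the variable exponent and the weight. In the constant-exponent theory one passes freely between $\|\chi_Q\|_{L^{p}(w)}$ and $w(Q)^{1/p}$ and one may simply freeze $p(\cdot)$ to a constant $p_0$ and invoke $w\in A_{p_0}$; here neither is available, since $w\in A_{p_0}({\mathfrak D})$ is in general false and the norm is not a power of the modular. Making the passage between norm and modular quantitative and uniform over \emph{all} dyadic cubes — small cubes being controlled through $\eqref{eq:190613-1}$, large cubes through $\eqref{eq:190613-2}$ together with the polynomial growth established above — and dovetailing this with the localization principle, is where the real work lies.
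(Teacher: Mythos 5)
Your opening reduction and your growth estimate are sound, and the latter is exactly the point the paper makes in Corollary \ref{cor:190411-1}: for $w \in A_{p(\cdot)}({\mathfrak D})$ one gets $w(Q)\lesssim |Q|^{p_+}$ and $\sigma(Q)\lesssim|Q|^{(p')_+}$ on large dyadic cubes, hence the convergence of $\int_{{\mathbb R}^n}(e+|x|)^{-K}w$ and $\int_{{\mathbb R}^n}(e+|x|)^{-K}\sigma$ for large $K$, which is precisely why the CFN12 scheme becomes available for the global class even though it fails for the local one. But the proposal stops where the actual proof begins. What carries the paper's argument is: the splitting $f=f_1+f_2$ according to whether $f>\sigma$ or $f\le\sigma$; the sparse (Calder\'on--Zygmund) decomposition of $M_{\mathfrak D}f_l$ with nutshells (Lemma \ref{lem:191117-1}); the case analysis of the sparse cubes relative to a fixed cube $P$ at the origin (contained in $P$, containing $P$, disjoint from $P$, and in the last case $\sigma(Q_j^k)\le 1$ versus $>1$); the exponent-freezing estimates of Lemma \ref{lem:190411-11} which produce exactly the $(e+|x|)^{-K}$ error terms your tail bound controls; and, crucially, the weighted Carleson-type summation of Lemma \ref{lem:190619-1}, which rests on the \emph{universal} boundedness of the weighted dyadic maximal operator $M_{W,{\mathfrak D}}$ on $L^r(W)$ (Lemma \ref{lem:190724-2}) together with the $A_\infty({\mathfrak D})$-type property $\sigma(Q_j^k)\lesssim\sigma(E_j^k)$ (Lemma \ref{lem:190618-1}, via Lemma \ref{lem:190411-2}). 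None of this is supplied or even named in your sketch, and you yourself concede in the final paragraph that "the real work" of making the norm--modular passage uniform over all cubes remains to be done; deferring it to "as in \cite[Theorem 1.1]{CFN12}" is not enough, because the dyadic weighted adaptation of each of these steps is the content of Section \ref{s3}.

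Moreover, the two tools you do name for closing the argument would not do the job. The unweighted boundedness of the dyadic maximal operator on $L^{p_0}({\mathbb R}^n)$ never interacts with $w$; what is needed after freezing the exponent is boundedness with respect to the measures $\sigma\,dx$ and $w\,dx$, which is why the paper invokes the Hyt\"onen--P\'erez bound for $M_{W,{\mathfrak D}}$ rather than any Lebesgue-measure estimate. And H\"ast\"o's localization principle (Lemma \ref{lem:190627-11}) cannot be used to pass from unit-scale estimates to the global bound for $M_{\mathfrak D}$: the global dyadic maximal operator does not localize, i.e.\ $(M_{\mathfrak D}f)\chi_Q\neq (M_{\mathfrak D}[f\chi_Q])\chi_Q$ because cubes of volume larger than one contribute, and handling those large cubes is exactly the content of the families ${\mathcal G}$, ${\mathcal H}_1$, ${\mathcal H}_2$ in the paper's proof. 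The localization identity holds only for the local operator $M^{\rm loc}_{\mathfrak D}$, which is why the paper uses H\"ast\"o's principle to deduce Theorem \ref{thm:main3} \emph{from} Theorem \ref{thm:main2}, not to prove Theorem \ref{thm:main2} itself. So the proposal has the right headline observation but a genuine gap: the weighted sparse summation machinery and the large-cube analysis are missing, and the substitutes offered for them would fail.
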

As we explained,
Theorem \ref{thm:main} will have been proved
once we prove  Theorem \ref{thm:main3},
whose proof in turn uses
Theorem \ref{thm:main2}.
We note that
unlike the proof of Theorem \ref{thm:main},
the one of
Theorem \ref{thm:main2}
is an analogue of \cite[Theorem 1.1]{CFN12}.
However we include its whole proof
for the sake of completeness.

Finally,
as an application of our results, 
we will prove the Rubio de Francia extrapolation theorem in our setting of weights.
Furthermore, using this theorem, we obtain the weighted vector-valued maximal inequality.
The theory of extrapolation is a powerful tool in harmonic analysis to extend
many results
starting from a weighted inequality.
Cruz-Uribe and Wang \cite{CW17} and Ho \cite{Ho16} extended the extrapolation theorem on weighted Lebesgue spaces with variable exponent, respectively.
We can show the extrapolation theorem for $A^{\rm loc}_{p(\cdot)}$ by applying the boundedness of the local maximal operator.

\begin{theorem} \label{thm 191031-2}
Let $N:[1,\infty)\to[1,\infty)$ be an increasing function.
Suppose that for some $p_0, 1<p_0<\infty$, and every $w_0 \in A^{\rm loc}_{p_0}$,
\[
\int_{\mathbb{R}^n} f(x)^{p_0} w_0(x) {\rm d}x 
\le
N([w_0]_{A^{\rm loc}_{p_0}})
\int_{\mathbb{R}^n} g(x)^{p_0} w_0(x) {\rm d}x 
\]
for pairs of functions $(f,g)$ contained in some family $\mathcal{F}$ 
of non-negative measurable functions.
Let $p(\cdot)$ satisfy conditions $(\ref{eq:190613-1})$ and 
$(\ref{eq:190613-2})$ and $1<p_-\le p_+<\infty$,
and $w \in A^{\rm loc}_{p(\cdot)}$.
Then, 
\[
\|f\|_{L^{p(\cdot)}(w)} \le C \|g\|_{L^{p(\cdot)}(w)}
\]
for $(f, g) \in \mathcal{F}$.
\end{theorem}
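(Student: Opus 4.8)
The plan is to run a Rubio de Francia iteration tailored to the local class $A^{\rm loc}_{p(\cdot)}$, in the spirit of Cruz-Uribe and Wang \cite{CW17}, with Theorem \ref{thm:main} as the driving estimate. First I would record two preliminaries. Since the quantity defining $A^{\rm loc}_{p(\cdot)}$ is symmetric under $(w,p(\cdot))\leftrightarrow(\sigma,p'(\cdot))$ (indeed $\sigma^{-1/(p'(\cdot)-1)}=w$), the dual weight $\sigma:=w^{-1/(p(\cdot)-1)}$ lies in $A^{\rm loc}_{p'(\cdot)}$ with $[\sigma]_{A^{\rm loc}_{p'(\cdot)}}=[w]_{A^{\rm loc}_{p(\cdot)}}$, and $p'(\cdot)$ again satisfies $(\ref{eq:190613-1})$, $(\ref{eq:190613-2})$ and $1<(p')_-\le(p')_+<\infty$; hence Theorem \ref{thm:main} yields the boundedness of $M^{\rm loc}$ both on $L^{p(\cdot)}(w)$ and on $L^{p'(\cdot)}(\sigma)$, with operator norms $A,B$ depending only on $p(\cdot)$ and $[w]_{A^{\rm loc}_{p(\cdot)}}$. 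Second, by the standard truncation device — replace $\mathcal{F}$ by $\{(\min(f,k)\chi_{B(0,k)},g):(f,g)\in\mathcal{F},\,k\in\N\}$, which still satisfies the hypothesis, prove the conclusion there, and let $k\to\infty$ — I may assume $\|f\|_{L^{p(\cdot)}(w)}<\infty$; I may also assume $\|g\|_{L^{p(\cdot)}(w)}<\infty$ and $g\not\equiv0$, since otherwise there is nothing to prove.

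Fix such a pair $(f,g)$. By the duality of weighted variable Lebesgue spaces (valid since $1<p_-\le p_+<\infty$: $L^{p'(\cdot)}(\sigma)$ is the associate space of $L^{p(\cdot)}(w)$ under the unweighted pairing) I would choose a nonnegative $g_1$ with $\|g_1\|_{L^{p'(\cdot)}(\sigma)}\le1$ and $\|f\|_{L^{p(\cdot)}(w)}\le C_1\int_{\R^n}fg_1\,{\rm d}x$, and then form the two Rubio de Francia iterates
\[
F:=\sum_{j=0}^\infty\frac{(M^{\rm loc})^jg}{(2A)^j},\qquad G_1:=\sum_{j=0}^\infty\frac{(M^{\rm loc})^jg_1}{(2B)^j},
\]
where the $j=0$ terms are $g$ and $g_1$. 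The usual reasoning (sublinearity of $M^{\rm loc}$ and the two boundedness statements) shows $F\ge g$, $G_1\ge g_1$, $\|F\|_{L^{p(\cdot)}(w)}\le2\|g\|_{L^{p(\cdot)}(w)}$, $\|G_1\|_{L^{p'(\cdot)}(\sigma)}\le2$, that $F$ and $G_1$ are positive and finite a.e.\ (positivity because iterating $M^{\rm loc}$ spreads the support of $g\not\equiv0$ over all of $\R^n$), and that $M^{\rm loc}F\le2A\,F$, $M^{\rm loc}G_1\le2B\,G_1$, i.e.\ $F,G_1$ are $A^{\rm loc}_1$ weights. Setting $W:=G_1F^{1-p_0}$, the elementary (local) Jones-type factorization $A^{\rm loc}_1\cdot(A^{\rm loc}_1)^{1-p_0}\subset A^{\rm loc}_{p_0}$, with $[u_0u_1^{1-p_0}]_{A^{\rm loc}_{p_0}}\le[u_0]_{A^{\rm loc}_1}[u_1]_{A^{\rm loc}_1}^{p_0-1}$, gives $W\in A^{\rm loc}_{p_0}$ with $[W]_{A^{\rm loc}_{p_0}}\le(2B)(2A)^{p_0-1}=:C_0$, a bound depending only on $p(\cdot),p_0,[w]_{A^{\rm loc}_{p(\cdot)}}$. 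Applying the hypothesis with $w_0=W$ yields $\int_{\R^n}f^{p_0}W\,{\rm d}x\le N(C_0)\int_{\R^n}g^{p_0}W\,{\rm d}x$.

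To finish I would unwind. Since $0\le g\le F$, $0\le g_1\le G_1$, $p_0\ge1$, and $W^{-1/(p_0-1)}=G_1^{1-p_0'}F$, one gets the pointwise bounds $g^{p_0}W\le FG_1$ and $g_1^{p_0'}W^{-1/(p_0-1)}\le FG_1$, while the generalized Hölder inequality gives $\int_{\R^n}FG_1\,{\rm d}x\le C_2\|F\|_{L^{p(\cdot)}(w)}\|G_1\|_{L^{p'(\cdot)}(\sigma)}\le4C_2\|g\|_{L^{p(\cdot)}(w)}$. Combining Hölder with exponents $p_0,p_0'$, the hypothesis, and these bounds,
\[
\int_{\R^n}fg_1\,{\rm d}x\le\Bigl(\int_{\R^n}f^{p_0}W\,{\rm d}x\Bigr)^{1/p_0}\Bigl(\int_{\R^n}g_1^{p_0'}W^{-\frac{1}{p_0-1}}\,{\rm d}x\Bigr)^{1/p_0'}\le N(C_0)^{1/p_0}\int_{\R^n}FG_1\,{\rm d}x,
\]
so $\|f\|_{L^{p(\cdot)}(w)}\le 4C_1C_2N(C_0)^{1/p_0}\|g\|_{L^{p(\cdot)}(w)}$, which is the claim (after undoing the truncation).

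I expect the main obstacle to be the choice and analysis of the weight $W$: one must combine the iterate of $g$ (from the $L^{p(\cdot)}(w)$-iteration) with the iterate of the dual function $g_1$ (from the $L^{p'(\cdot)}(\sigma)$-iteration) in exactly the proportion $G_1F^{1-p_0}$, so that \emph{simultaneously} $W\in A^{\rm loc}_{p_0}$ with characteristic controlled only by $[w]_{A^{\rm loc}_{p(\cdot)}}$ — which is where the local Jones factorization and the second use of Theorem \ref{thm:main} (applied to $\sigma$) enter — and, after invoking the $p_0$-hypothesis, both $g^{p_0}W$ and $g_1^{p_0'}W^{-1/(p_0-1)}$ collapse to $FG_1$, closing the estimate; note that this route is full‑range and needs no relation between $p_0$ and $p(\cdot)$. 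The supporting ingredients — the duality theory for weighted variable $L^{p(\cdot)}$, the fact that iterates of the \emph{local} operator $M^{\rm loc}$ are genuine $A^{\rm loc}_1$ weights (in particular positive a.e.), and the truncation reduction — are routine but must be set up carefully.
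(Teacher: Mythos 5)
Your proposal is correct and follows essentially the same route as the paper: boundedness of $M^{\rm loc}$ on $L^{p(\cdot)}(w)$ and, via the symmetry $w\leftrightarrow\sigma$, on $L^{p'(\cdot)}(\sigma)$ (Theorem \ref{thm:main}), two Rubio de Francia iterations, the local factorization $A^{\rm loc}_1\cdot(A^{\rm loc}_1)^{1-p_0}\subset A^{\rm loc}_{p_0}$ (Lemma \ref{lem 191023-1}), and then duality plus H\"older with exponents $p_0,p_0'$. The only cosmetic differences are that the paper iterates $h_1=f/\|f\|_{L^{p(\cdot)}(w)}+g/\|g\|_{L^{p(\cdot)}(w)}$ together with the dual operator $M'h=w^{-1}M^{\rm loc}(hw)$ on $L^{p'(\cdot)}(w)$, whereas you iterate $g$ and the dual test function directly and close the estimate through the symmetric collapse $g^{p_0}W,\ g_1^{p_0'}W^{-1/(p_0-1)}\le FG_1$.
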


Rychkov \cite[Lemma 2.11]{Rychkov01} proved the weighted vector-valued inequality
for $M^{\rm loc}$ and $w \in A^{\rm loc}_p$
as an extension of the results in \cite{AnJo80}.

\begin{proposition} \label{lem 191031-1}
Let $1<p<\infty, 1< q \le \infty$, and $w \in A^{\rm loc}_p$.
Then
 for any sequence of measurable functins $\{f_j\}_{j \in \mathbb{N}}$, 
we have
\[
\left\|
\left(
\sum_{j=1}^\infty
\left[
M^{\rm loc}{}f_j
\right]^q
\right)^{\frac1q}
\right\|_{L^p(w)}
\le C
\left\|
\left(
\sum_{j=1}^\infty
|f_j|^q
\right)^{\frac1q}
\right\|_{L^p(w)}.
\]
\end{proposition}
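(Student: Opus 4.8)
\emph{Proof plan.} The plan is to bootstrap the scalar weighted bound for $M^{\rm loc}$ into the vector-valued one by means of the Rubio de Francia extrapolation theorem already established as Theorem~\ref{thm 191031-2}. First I would dispose of the endpoint $q=\infty$ by hand: since $|f_j|\le\sup_k|f_k|$ pointwise for every $j$, monotonicity of $M^{\rm loc}$ gives $\sup_jM^{\rm loc}f_j\le M^{\rm loc}\big(\sup_k|f_k|\big)$, and Theorem~\ref{thm:main}, applied with the constant exponent $p$ (which trivially satisfies $\eqref{eq:190613-1}$ and $\eqref{eq:190613-2}$, and for which $A^{\rm loc}_{p(\cdot)}=A^{\rm loc}_p$), yields $\|\sup_jM^{\rm loc}f_j\|_{L^p(w)}\le C\|\sup_k|f_k|\|_{L^p(w)}$. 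So I would henceforth assume $1<q<\infty$.

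For $1<q<\infty$ the key preliminary step is to record the \emph{quantitative} scalar inequality: tracing the constants through the proofs of Theorems~\ref{thm:main2}, \ref{thm:main3} and \ref{thm:main}, one sees that every constant depends on the weight only through its characteristic, so for the fixed constant exponent $q$ there is an increasing function $N_q:[1,\infty)\to[1,\infty)$, depending only on $q$ and $n$, with
\[
\int_{\mathbb{R}^n}\big(M^{\rm loc}h(x)\big)^q w_0(x)\,{\rm d}x\le N_q\big([w_0]_{A^{\rm loc}_q}\big)\int_{\mathbb{R}^n}|h(x)|^q w_0(x)\,{\rm d}x
\]
for all $w_0\in A^{\rm loc}_q$ and all measurable $h$. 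Granting this, I would fix $K\in\mathbb{N}$, set $F_K\equiv\big(\sum_{j=1}^K[M^{\rm loc}f_j]^q\big)^{1/q}$ and $G_K\equiv\big(\sum_{j=1}^K|f_j|^q\big)^{1/q}$, and take $\mathcal{F}$ to be the family of all such pairs $(F_K,G_K)$ as $K$ and $\{f_j\}_j$ vary. Summing the displayed inequality over $j=1,\dots,K$ gives
\[
\int_{\mathbb{R}^n}F_K(x)^q w_0(x)\,{\rm d}x\le N_q\big([w_0]_{A^{\rm loc}_q}\big)\int_{\mathbb{R}^n}G_K(x)^q w_0(x)\,{\rm d}x
\]
for every $w_0\in A^{\rm loc}_q$, which is exactly the hypothesis of Theorem~\ref{thm 191031-2} with $p_0=q$ and $N=N_q$. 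Applying that theorem with the constant exponent $p(\cdot)\equiv p\in(1,\infty)$ and the weight $w\in A^{\rm loc}_p=A^{\rm loc}_{p(\cdot)}$ produces $\|F_K\|_{L^p(w)}\le C\|G_K\|_{L^p(w)}$ with $C$ independent of $K$; letting $K\to\infty$ and invoking the monotone convergence theorem ($F_K$ and $G_K$ increase pointwise to the full $\ell^q$-sums) gives the assertion.

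The hard part will be the quantitative bookkeeping in the preliminary step: Theorem~\ref{thm:main} is stated only with ``there exists a constant $C$'', whereas the extrapolation theorem requires $C=N_q([w_0]_{A^{\rm loc}_q})$ with $N_q$ increasing. This is not deep, but it must be extracted by following the dependence of constants through the weight extension of Lemma~\ref{lem:190626-1}, the H\"{a}sto localization, and the $3^n$-lattice reduction; in each of these the constant is controlled by $[w]_{A^{\rm loc}_{p(\cdot)}}$, $p_\pm$, the ${\rm LH}$-constants and $n$, and this collapses to an increasing function of $[w_0]_{A^{\rm loc}_q}$ once the exponent is the constant $q$. Everything else is the standard Rubio de Francia ``vector-valued from scalar'' mechanism and poses no real difficulty.
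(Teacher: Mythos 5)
Your route is genuinely different from the paper's treatment of this statement: the paper does not prove Proposition \ref{lem 191031-1} at all, but quotes it from Rychkov \cite[Lemma 2.11]{Rychkov01}, whose own proof is direct (extend $w|_Q$ to a global $A_p$ weight with comparable characteristic and invoke the Andersen--John/Fefferman--Stein vector-valued maximal theorem). You instead derive it from the paper's extrapolation theorem (Theorem \ref{thm 191031-2}) applied with $p_0=q$ and constant exponent $p(\cdot)\equiv p$, the hypothesis for the pairs $(F_K,G_K)$ being obtained by summing the scalar weighted bound over $j$. This mechanism is sound and not circular: the proof of Theorem \ref{thm 191031-2} uses only the scalar boundedness of $M^{\rm loc}$ (Theorem \ref{thm:main}) and its dual-weight version, never Proposition \ref{lem 191031-1}; in fact your argument is exactly the paper's own derivation of Theorem \ref{thm 191031-3}, specialized to constant exponent and with the extra observation that at $p_0=q$ the vector-valued hypothesis collapses to the scalar one. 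The truncation to finite $K$, the monotone convergence step, and the separate treatment of $q=\infty$ via $\sup_j M^{\rm loc}f_j\le M^{\rm loc}(\sup_k|f_k|)$ are all fine (the truncation also quietly guarantees the finiteness $\|F_K\|_{L^p(w)}<\infty$ that the proof of Theorem \ref{thm 191031-2} implicitly requires).

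The one step I do not accept as written is your quantitative preliminary: it is not true that ``tracing the constants through the proofs of Theorems \ref{thm:main2}, \ref{thm:main3} and \ref{thm:main} one sees that every constant depends on the weight only through its characteristic.'' Those proofs fix an auxiliary cube $P$ depending on $w$ (chosen so that neighbouring cubes have $w$- and $\sigma$-measure $\gtrsim1$), and the constants absorb quantities such as $\sigma(P)$, $w(P)$ and the tail integrals of Corollary \ref{cor:190411-1}: for instance the bound ${\rm I}_{2,\mathcal F}\lesssim(1+\sigma(P))^{p_+-p_-}\sigma(P)\lesssim1$ hides a constant depending on $\sigma(P)$, which is not controlled by $[w]_{A^{\rm loc}_{p(\cdot)}}$; the paper only claims qualitative boundedness. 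Setting the exponent equal to the constant $q$ does not remove this dependence if you follow the paper's proof verbatim. The fact you need is nevertheless true and classical, so the fix is simply to obtain the inequality $\int(M^{\rm loc}h)^q w_0\le N_q([w_0]_{A^{\rm loc}_q})\int|h|^q w_0$ from Rychkov's scalar theorem (his extension lemma gives a global $A_q$ weight with characteristic $\lesssim[w_0]_{A^{\rm loc}_q}$, after which the classical Muckenhoupt bound is quantitative), or equivalently to redo the constant-exponent case of Lemma \ref{lem:190626-1} plus the classical dyadic weighted bound, where all constants are manifestly increasing functions of the characteristic. With that substitution your proof is complete.
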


We recall that
Cruz-Uribe et al. extended the same result
by Anderson and John \cite{AnJo80}
to variable Lebesgue spaces.
\begin{proposition} \label{prop 191031-3a}
Suppose that $p(\cdot)$ satisfy conditions 
$(\ref{eq:190613-1})$ and $(\ref{eq:190613-2})$ 
as well as $1<p_-\le p_+<\infty$,
and let $w \in A_{p(\cdot)}$ and $1<q \le \infty$.
Then
 for any sequence of measurable functins $\{f_j\}_{j \in \mathbb{N}}$,
we have 
\begin{align} \label{eq 191031-4}
\left\|
\left(
\sum_{k=1}^\infty
\left[M f_k{}\right]^q
\right)^{\frac{1}{q}}
\right\|_{L^{p(\cdot)}(w)}
\le C
\left\|
\left(
\sum_{k=1}^\infty
|f_k|^q
\right)^{\frac{1}{q}}
\right\|_{L^{p(\cdot)}(w)}.
\end{align}
\end{proposition}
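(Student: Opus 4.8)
The plan is to deduce this from the classical constant-exponent weighted vector-valued maximal inequality of Andersen and John \cite{AnJo80} by means of the Rubio de Francia extrapolation theorem for weighted variable Lebesgue spaces --- the global counterpart of Theorem \ref{thm 191031-2}, due to Cruz-Uribe and Wang \cite{CW17}. In short: the Andersen--John estimate for a single fixed exponent $p_0$ and \emph{all} $A_{p_0}$ weights is precisely the input that extrapolation needs, and extrapolation then delivers the variable-exponent conclusion with no further work on the maximal operator itself.

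First I would record the quantitative form of the Andersen--John estimate. Fix any $p_0 \in (1,\infty)$; then there is an increasing function $N_{p_0,q}:[1,\infty)\to[1,\infty)$ such that for every $v \in A_{p_0}$ and every sequence $\{h_k\}_{k}$ of measurable functions
\[
\int_{\mathbb{R}^n}\left(\sum_{k}[M h_k]^q\right)^{\frac{p_0}{q}} v\,{\rm d}x
\le
N_{p_0,q}([v]_{A_{p_0}})\int_{\mathbb{R}^n}\left(\sum_{k}|h_k|^q\right)^{\frac{p_0}{q}} v\,{\rm d}x .
\]
For $1<q<\infty$ this is \cite{AnJo80}, and tracking the constant as an increasing function of $[v]_{A_{p_0}}$ is routine in that argument; the endpoint $q=\infty$ reduces to the scalar bound $\|Mh\|_{L^{p_0}(v)}\le C([v]_{A_{p_0}})\|h\|_{L^{p_0}(v)}$ of Muckenhoupt, since $\sup_k Mh_k\le M(\sup_k|h_k|)$.

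Next I would feed this to extrapolation. For $N\in\mathbb{N}$, let $\mathcal{F}_N$ consist of the pairs
\[
\left(\left(\sum_{k=1}^{N}[M f_k]^q\right)^{\frac1q},\ \left(\sum_{k=1}^{N}|f_k|^q\right)^{\frac1q}\right),
\]
where $\{f_k\}_{k=1}^N$ runs over finite sequences of bounded, compactly supported measurable functions, so both entries are finite; we may assume the second entry lies in $L^{p(\cdot)}(w)$, as otherwise there is nothing to prove. The displayed inequality says exactly that $\int_{\mathbb{R}^n} F^{p_0}v\,{\rm d}x\le N_{p_0,q}([v]_{A_{p_0}})\int_{\mathbb{R}^n} G^{p_0}v\,{\rm d}x$ for every $v\in A_{p_0}$ and every $(F,G)\in\mathcal{F}_N$, which is the hypothesis of the global extrapolation theorem with the single exponent $p_0$. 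Since $p(\cdot)$ obeys \eqref{eq:190613-1}, \eqref{eq:190613-2}, $1<p_-\le p_+<\infty$, and $w\in A_{p(\cdot)}$, extrapolation yields a constant $C$ independent of $N$ with $\|F\|_{L^{p(\cdot)}(w)}\le C\|G\|_{L^{p(\cdot)}(w)}$ for $(F,G)\in\mathcal{F}_N$; letting $N\to\infty$ and using the monotone convergence property of the $L^{p(\cdot)}(w)$-norm gives \eqref{eq 191031-4}, and a further Fatou/monotone argument removes the boundedness and compact-support restrictions on the $f_k$.

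The main obstacle is conceptual rather than computational: one must be sure that the hypothesis handed to the extrapolation machinery is the one it actually requires --- the weighted $L^{p_0}$ inequality for \emph{all} $A_{p_0}$ weights with a constant increasing in the $A_{p_0}$ characteristic --- and that the \emph{global} version of extrapolation is invoked, since the hypothesis of Theorem \ref{thm 191031-2} would instead demand the Andersen--John/Rychkov inequality for $M^{\rm loc}$ and $w_0\in A^{\rm loc}_{p_0}$ (Proposition \ref{lem 191031-1}), which is the route appropriate to the local analogue. The remaining points --- the endpoint $q=\infty$ and the truncation/limiting step --- are standard.
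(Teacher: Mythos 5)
Your argument is correct and is exactly the route the paper intends: the paper does not prove Proposition \ref{prop 191031-3a} at all but recalls it from the literature (Cruz-Uribe et al., via \cite{CW17}), where it is obtained precisely as you describe, by feeding the constant-exponent Andersen--John inequality for all $A_{p_0}$ weights into the global Rubio de Francia extrapolation theorem for weighted variable Lebesgue spaces. This also mirrors, step for step, how the paper itself derives the local analogue (Theorem \ref{thm 191031-3} from Theorem \ref{thm 191031-2} and Proposition \ref{lem 191031-1}) in Section \ref{s5}, so no further comment is needed beyond noting that one should check the weighted-norm convention of \cite{CW17} matches the measure-type convention $\int(|f|/\lambda)^{p(x)}w\,{\rm d}x$ used here.
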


The following theorem is the weighted vector-valued inequality for the local variable weight.

\begin{theorem} \label{thm 191031-3}
Suppose that $p(\cdot)$ satisfy conditions 
$(\ref{eq:190613-1})$ and $(\ref{eq:190613-2})$ 
as well as $1<p_-\le p_+<\infty$,
and let $w \in A_{p(\cdot)}^{\rm loc}$ and $1<q \le \infty$.
Then
 for any sequence of measurable functins $\{f_j\}_{j \in \mathbb{N}}$,
we have 
\begin{align} \label{eq 191031-4a}
\left\|
\left(
\sum_{k=1}^\infty
\left[M^{\rm loc}f_k{}\right]^q
\right)^{\frac{1}{q}}
\right\|_{L^{p(\cdot)}(w)}
\le C
\left\|
\left(
\sum_{k=1}^\infty
|f_k|^q
\right)^{\frac{1}{q}}
\right\|_{L^{p(\cdot)}(w)}.
\end{align}

\end{theorem}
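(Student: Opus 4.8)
The plan is to derive Theorem~\ref{thm 191031-3} from the extrapolation theorem, Theorem~\ref{thm 191031-2}, exactly as \eqref{eq 191031-4} is derived from the classical weighted vector-valued inequality for $M$ in \cite{CW17}. First I would dispose of the case $q=\infty$: since $\|\sup_k M^{\rm loc}f_k\|_{L^{p(\cdot)}(w)} = \|M^{\rm loc}(\sup_k|f_k|)\|_{L^{p(\cdot)}(w)}$ in the relevant pointwise sense (more precisely, $\sup_k M^{\rm loc}f_k \le M^{\rm loc}(\sup_k |f_k|)$ and the reverse inequality holds for the envelope), the claim reduces to Theorem~\ref{thm:main}. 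So assume $1<q<\infty$.

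Next I would set up the extrapolation family. Fix $q$ with $1<q<\infty$ and set $p_0 = q$. Define
\[
\mathcal{F} \equiv \left\{ \left( \Bigl( \sum_{k=1}^{K} [M^{\rm loc}f_k]^q \Bigr)^{1/q},\ \Bigl( \sum_{k=1}^{K} |f_k|^q \Bigr)^{1/q} \right) : K \in \mathbb{N},\ \{f_k\}_{k=1}^K \subset L^{p(\cdot)}(w) \right\},
\]
using finite truncations so that all quantities are finite and the passage to $K=\infty$ is handled at the end by monotone convergence. The key input is that for every $w_0 \in A^{\rm loc}_{p_0} = A^{\rm loc}_q$ one has the scalar weighted bound
\[
\int_{\mathbb{R}^n} \Bigl( \sum_{k=1}^K [M^{\rm loc}f_k(x)]^q \Bigr) w_0(x)\,{\rm d}x
= \sum_{k=1}^K \int_{\mathbb{R}^n} [M^{\rm loc}f_k(x)]^q w_0(x)\,{\rm d}x
\le N([w_0]_{A^{\rm loc}_q})^q \sum_{k=1}^K \int_{\mathbb{R}^n} |f_k(x)|^q w_0(x)\,{\rm d}x,
\]
where the inequality is precisely Proposition~\ref{lem 191031-1} (Rychkov's weighted $L^q$ bound, which implies the scalar weighted strong bound $\|M^{\rm loc}h\|_{L^q(w_0)} \le C([w_0]_{A^{\rm loc}_q})\|h\|_{L^q(w_0)}$ since for $q=p$ the vector-valued statement contains the scalar one). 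Writing this as $\int f^{p_0} w_0 \le N([w_0]_{A^{\rm loc}_{p_0}}) \int g^{p_0} w_0$ with $N(t) = C(t)^q$ an increasing function of $t$, the hypothesis of Theorem~\ref{thm 191031-2} is verified.

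Applying Theorem~\ref{thm 191031-2} with this $\mathcal{F}$, the given $p(\cdot)$ and $w \in A^{\rm loc}_{p(\cdot)}$, I obtain
\[
\left\| \Bigl( \sum_{k=1}^K [M^{\rm loc}f_k]^q \Bigr)^{1/q} \right\|_{L^{p(\cdot)}(w)}
\le C \left\| \Bigl( \sum_{k=1}^K |f_k|^q \Bigr)^{1/q} \right\|_{L^{p(\cdot)}(w)}
\]
with $C$ independent of $K$ and of the sequence. Letting $K \to \infty$ and invoking the monotone convergence property of the $\| \cdot \|_{L^{p(\cdot)}(w)}$ quasi-norm (via the associated modular, which increases to its limit), I recover \eqref{eq 191031-4a}. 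I expect the main obstacle to be a bookkeeping one rather than a conceptual one: one must check carefully that the increasing dependence $N(t) = C(t)^q$ on the characteristic $[w_0]_{A^{\rm loc}_q}$ that the extrapolation theorem demands genuinely holds, i.e. that the constant in Proposition~\ref{lem 191031-1} can be taken to depend only on $[w_0]_{A^{\rm loc}_q}$ (and on $n,q$) and monotonically so — this is implicit in Rychkov's proof but worth isolating — together with the routine but necessary verification that truncation and the limit $K\to\infty$ are legitimate in the variable-exponent weighted norm.
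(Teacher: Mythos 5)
Your proof is correct and takes essentially the same route as the paper, which deduces Theorem \ref{thm 191031-3} by applying the extrapolation result (Theorem \ref{thm 191031-2}) to the pairs $\bigl((\sum_k[M^{\rm loc}f_k]^q)^{1/q},(\sum_k|f_k|^q)^{1/q}\bigr)$ with the weighted hypothesis supplied by Proposition \ref{lem 191031-1}. Your choice $p_0=q$ (so that only the scalar weighted $L^q(w_0)$ bound is needed), the truncation-and-monotone-convergence bookkeeping, and the separate elementary treatment of $q=\infty$ are only minor variants of that argument.
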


Throughout the paper, we use the following notation:
By $A\lesssim B$, we mean
$A \le CB$ for some constant $C>0$,
while
by $A\gtrsim B$, we mean  
$A \ge CB$ for some constant $C>0$.
The relation $A \sim B$ means that 
$A\lesssim B$ and $B\lesssim A$. 
For a weight $w$
and measurable set $E$,
we define
$w(E)\equiv \displaystyle \int_E w(x){\rm d}x$.

We organize the remaining part of this paper as follows.
Before entering the proofs of the above results, we begin 
with various preliminaries and establish some notation
in the next
section.
%in Section \ref{s2}.
Section \ref{s3} proves Theorem \ref{thm:main2},
while
Section \ref{s4} proves Theorem \ref{thm:main3}.
Finally, as an application, Section \ref{s5} is devoted 
to the proof of the weighted vector-valued maximal inequality for $A^{\rm loc}_{p(\cdot)}$.

\section{Preliminaries}
\label{s2}

We will collect some preliminary facts.
We recall the definition of variable Lebesgue spaces and then consider classes of weights.

\subsection{Weighted variable Lebesgue spaces}

For any measurable subset $\Omega \subset \mathbb{R}^n$, denote
\[
p_+(\Omega)
\equiv
{\rm esssup}_{x \in \Omega} p(x),
\quad
p_-(\Omega)
\equiv
{\rm essinf}_{x \in \Omega} p(x).
\]
In particular, when $\Omega=\mathbb{R}^n$, we simply write $p_+$ and $p_-$, respectively.

Remark that if $p(\cdot) \in {\rm LH}_{0}$ then $p'(\cdot) \in {\rm LH}_{0}$.
Likewise,  if $p(\cdot) \in {\rm LH}_{\infty}$ then $p'(\cdot) \in {\rm LH}_{\infty}$.
Furthermore, $(p_\infty)'=(p')_\infty$. 

We recall H\"{o}lder inequality.
\begin{lemma}[Generalized H\"{o}lder's inequality]\label{thm:gHolder}
%%===========================
Let $p(\cdot) :{\mathbb R}^n \to [1,\infty]$ be a variable exponent.
Then for all $f\in L^{p(\cdot)}({\mathbb R}^n)$ 
and all $g\in L^{p'(\cdot)}({\mathbb R}^n)$, 
\begin{equation}\label{gHolder}
 \|f \cdot g\|_{L^1({\mathbb R}^n)} 
 \le 
 r_p 
 \|f\|_{L^{p(\cdot)}({\mathbb R}^n)} 
 \|g\|_{L^{p'(\cdot)}({\mathbb R}^n)},
\end{equation}
where
\begin{equation}\label{rp}
 r_p\equiv 1+\frac{1}{p_-} -\frac{1}{p_+}=
\frac{1}{p_-}+\frac{1}{(p')_-} \le 2.
\end{equation}
\end{lemma}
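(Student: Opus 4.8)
The plan is to reduce \eqref{gHolder} to a normalized statement and then integrate a pointwise Young-type inequality. First I would dispose of the trivial cases: if $\|f\|_{L^{p(\cdot)}(\mathbb{R}^n)}=0$ or $\|g\|_{L^{p'(\cdot)}(\mathbb{R}^n)}=0$ then $fg=0$ almost everywhere and there is nothing to prove, so by the positive homogeneity of both sides I may assume $\|f\|_{L^{p(\cdot)}(\mathbb{R}^n)}=\|g\|_{L^{p'(\cdot)}(\mathbb{R}^n)}=1$; it then suffices to show $\int_{\mathbb{R}^n}|fg|\,dx\le r_p$. Here I would invoke the standard norm--modular (``unit ball'') relation for variable Lebesgue spaces, namely that $\|h\|_{L^{q(\cdot)}(\mathbb{R}^n)}\le1$ implies $\rho_{q(\cdot)}(h)\le1$, where $\rho_{q(\cdot)}(h)\equiv\int_{\{q<\infty\}}|h(x)|^{q(x)}\,dx+\|h\|_{L^\infty(\{q=\infty\})}$. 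In particular $\rho_{p(\cdot)}(f)\le1$ and $\rho_{p'(\cdot)}(g)\le1$.

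Next I would partition $\mathbb{R}^n$ into $\Omega_1=\{p(x)=1\}$, $\Omega_*=\{1<p(x)<\infty\}$ and $\Omega_\infty=\{p(x)=\infty\}$ (discarding whichever of $\Omega_1,\Omega_\infty$ has measure zero), and prove the pointwise estimate
\[
|f(x)g(x)|\le\frac{1}{p_-}|f(x)|^{p(x)}\chi_{\mathbb{R}^n\setminus\Omega_\infty}(x)+\frac{1}{(p')_-}|g(x)|^{p'(x)}\chi_{\mathbb{R}^n\setminus\Omega_1}(x).
\]
On $\Omega_*$ this is the classical Young inequality $ab\le a^{p(x)}/p(x)+b^{p'(x)}/p'(x)$ combined with $1/p(x)\le1/p_-$ and $1/p'(x)\le1/(p')_-$. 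On $\Omega_1$ one has $p'(x)=\infty$ and (when $|\Omega_1|>0$) $p_-=1$, while $|g(x)|\le\|g\|_{L^\infty(\Omega_1)}\le\rho_{p'(\cdot)}(g)\le1$, so $|f(x)g(x)|\le|f(x)|=|f(x)|^{p(x)}=p_-^{-1}|f(x)|^{p(x)}$; symmetrically, on $\Omega_\infty$ one has $p(x)=\infty$, $(p')_-=1$ and $|f(x)|\le\|f\|_{L^\infty(\Omega_\infty)}\le\rho_{p(\cdot)}(f)\le1$, so $|f(x)g(x)|\le|g(x)|=|g(x)|^{p'(x)}=(p')_-^{-1}|g(x)|^{p'(x)}$.

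Integrating this pointwise bound over $\mathbb{R}^n$ and using $\rho_{p(\cdot)}(f)\le1$ and $\rho_{p'(\cdot)}(g)\le1$ yields
\[
\int_{\mathbb{R}^n}|fg|\,dx\le\frac{1}{p_-}\int_{\{p<\infty\}}|f(x)|^{p(x)}\,dx+\frac{1}{(p')_-}\int_{\{p'<\infty\}}|g(x)|^{p'(x)}\,dx\le\frac{1}{p_-}+\frac{1}{(p')_-},
\]
which is the asserted inequality with constant $r_p$; the identities $1+\tfrac{1}{p_-}-\tfrac{1}{p_+}=\tfrac{1}{p_-}+\tfrac{1}{(p')_-}\le2$ follow from $(p')_-=(p_+)'$ — a consequence of the monotonicity of $t\mapsto t'=t/(t-1)$ — together with $1\le p_-\le p_+\le\infty$. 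I do not expect a genuine obstacle: the only care needed is in handling the degenerate sets $\{p=1\}$ and $\{p=\infty\}$, where one must observe that the relevant constants $1/p_-$ and $1/(p')_-$ equal exactly $1$, so that no loss occurs and the sharp value $r_p$ (rather than a cruder bound such as $2$ or $3$) is recovered, and in the appeal to the norm--modular relation, which is a routine fact about $L^{p(\cdot)}$.
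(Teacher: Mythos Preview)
Your argument is correct and is the standard proof of the generalized H\"older inequality in variable Lebesgue spaces. The paper itself does not supply a proof of this lemma: it is recorded in Section~\ref{s2} as a known preliminary fact and is simply used later (for instance in the proof of Lemma~\ref{lem:190626-1} and Lemma~\ref{lem:190617-1}). So there is nothing to compare against beyond noting that your normalization-plus-pointwise-Young approach, together with the careful treatment of the sets $\{p=1\}$ and $\{p=\infty\}$ to recover the sharp constant $r_p$, is exactly the classical argument one finds in the literature on $L^{p(\cdot)}$ spaces. The norm--modular relation you invoke is, in the finite-exponent case, precisely Lemma~\ref{lem:190617-2} of the paper.
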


We recall some properties for the variable Lebesgue space $L^{p(\cdot)}$.

\begin{lemma} {\rm \cite[Lemma 2.2]{NaSa12}} \label{lem:190613-3}
Suppose that $p(\cdot)$ is a function 
satisfying $(\ref{eq:190613-1})$ and $(\ref{eq:190613-2})$.
\begin{enumerate}
\item[$(1)$]
For all cubes $Q$ with $|Q| \le 1$, we have 
$
|Q|^{{1}/{p_{-}(Q)}} \lesssim |Q|^{{1}/{p_{+}(Q)}}.
$
In particular, we have
$
|Q|^{{1}/{p_{-}(Q)}} \sim |Q|^{{1}/{p_{+}(Q)}} \sim |Q|^{{1}/{p(z)}} \sim \|\chi_Q\|_{L^{p(\cdot)}}.
$
\item[$(2)$]
For all cubes $Q$ with $|Q| \ge 1$, we have 
$
\|\chi_Q\|_{L^{p(\cdot)}} \sim |Q|^{{1}/{p_\infty}}.
$
\end{enumerate} 
\end{lemma}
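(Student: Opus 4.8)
The plan is to treat the two parts separately; both rest on elementary bounds for the modular $\rho(\chi_Q/\lambda)\equiv\int_Q\lambda^{-p(x)}\,dx$ together with $\|\chi_Q\|_{L^{p(\cdot)}}=\inf\{\lambda>0:\rho(\chi_Q/\lambda)\le1\}$. For part $(1)$ I would first prove $|Q|^{1/p_-(Q)}\sim|Q|^{1/p_+(Q)}$ whenever $|Q|\le1$. Since $|Q|\le1$ and $p_-(Q)\le p_+(Q)$, the bound $|Q|^{1/p_-(Q)}\le|Q|^{1/p_+(Q)}$ is immediate, so only the reverse uses $\mathrm{LH}_0$; here I would split according to whether $\mathrm{diam}\,Q>\tfrac12$ (then the side length, hence $|Q|$, is bounded below by a dimensional constant and there is nothing to prove) or $\mathrm{diam}\,Q\le\tfrac12$ (then $\mathrm{LH}_0$ applied to a.e.\ $x,y\in Q$ gives $p_+(Q)-p_-(Q)\lesssim 1/(-\log\mathrm{diam}\,Q)\lesssim 1/(-\log|Q|)$, using $\mathrm{diam}\,Q\sim|Q|^{1/n}$ and that $|Q|$ may be taken below a dimensional threshold, the complementary range being covered by the first case), whence $\big(\tfrac1{p_-(Q)}-\tfrac1{p_+(Q)}\big)(-\log|Q|)\le\tfrac{1}{p_-^2}(p_+(Q)-p_-(Q))(-\log|Q|)\lesssim1$ and exponentiation gives $|Q|^{1/p_+(Q)}\lesssim|Q|^{1/p_-(Q)}$. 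As $p_-(Q)\le p(z)\le p_+(Q)$ for a.e.\ $z\in Q$ and $|Q|\le1$, one has $|Q|^{1/p_+(Q)}\le|Q|^{1/p(z)}\le|Q|^{1/p_-(Q)}$, so $|Q|^{1/p(z)}$ joins the chain. Finally $|Q|\,\lambda^{-p_-(Q)}\le\rho(\chi_Q/\lambda)\le|Q|\,\lambda^{-p_+(Q)}$ for $0<\lambda\le1$ shows, taking $\lambda=|Q|^{1/p_+(Q)}\le1$, that $\|\chi_Q\|_{L^{p(\cdot)}}\le|Q|^{1/p_+(Q)}\le1$, and since the infimum is then attained among $\lambda\le1$, admissibility forces $\lambda\ge|Q|^{1/p_-(Q)}$, giving $\|\chi_Q\|_{L^{p(\cdot)}}\ge|Q|^{1/p_-(Q)}$; comparability of the two ends closes part $(1)$.

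For part $(2)$, note first $1\le p_-\le p_\infty\le p_+$ (from $p(x)\in[p_-,p_+]$ and $p(x)\to p_\infty$). The range $1\le|Q|\le C_0$, for any fixed $C_0$, I would handle directly: $\rho(\chi_Q/\lambda)>|Q|\ge1$ for $\lambda<1$, while $\rho(\chi_Q/C_0^{1/p_-})\le|Q|C_0^{-1}\le1$, so $\|\chi_Q\|_{L^{p(\cdot)}}\in[1,C_0^{1/p_-}]$, an interval also containing $|Q|^{1/p_\infty}$. For $|Q|$ large, set $\lambda_0\equiv|Q|^{1/p_\infty}$; the goal is $\rho(\chi_Q/\lambda_0)=\int_Q|Q|^{-p(x)/p_\infty}\,dx\sim1$. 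Granting this, the passage to the norm is routine: for $\mu\ge1$, $\rho(\chi_Q/(\mu\lambda_0))\le\mu^{-p_-}\rho(\chi_Q/\lambda_0)$, so choosing $\mu$ a suitable power of the upper constant makes the modular $\le1$ and yields $\|\chi_Q\|_{L^{p(\cdot)}}\lesssim|Q|^{1/p_\infty}$; for $\mu\le1$, $\rho(\chi_Q/(\mu\lambda_0))\ge\mu^{-p_+}\rho(\chi_Q/\lambda_0)$, so the lower bound on $\rho(\chi_Q/\lambda_0)$ and strict monotonicity of $\lambda\mapsto\rho(\chi_Q/\lambda)$ give $\|\chi_Q\|_{L^{p(\cdot)}}\gtrsim|Q|^{1/p_\infty}$.

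The heart of part $(2)$, and the step I expect to be the main obstacle, is $\int_Q|Q|^{-p(x)/p_\infty}\,dx\sim1$ for $|Q|$ large, where $\mathrm{LH}_\infty$ enters. I would split $Q=\big(Q\cap B(0,r)\big)\cup\big(Q\setminus B(0,r)\big)$ with the $Q$-adapted radius $r\equiv|Q|^{p_-/(np_+)}$, so $|B(0,r)|=c_n|Q|^{p_-/p_+}$. On the near piece, $|Q|\ge1$ and $p(x)\ge p_-$ give $|Q|^{-p(x)/p_\infty}\le|Q|^{-p_-/p_+}$, so its contribution is at most $c_n|Q|^{p_-/p_+}\cdot|Q|^{-p_-/p_+}=c_n=O(1)$; this is exactly why $r$ must not exceed a fixed power of $|Q|$ determined by $p_-/p_+$, since near the origin $\mathrm{LH}_\infty$ gives no control. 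On the far piece, $|x|\ge r$ gives $\log(e+|x|)\ge\log r=\tfrac{p_-}{np_+}\log|Q|$, so $\mathrm{LH}_\infty$ yields $|p(x)-p_\infty|\le Cnp_+/(p_-\log|Q|)$; writing $|Q|^{-p(x)/p_\infty}=|Q|^{-1}\exp\!\big(\tfrac{(p_\infty-p(x))\log|Q|}{p_\infty}\big)$ and bounding the exponent in absolute value by $Cnp_+/p_-^2$ shows the integrand is comparable to $|Q|^{-1}$ pointwise on the far piece, with constants depending only on $n$, $p_\pm$ and the $\mathrm{LH}_\infty$ constant. Since $p_-/p_+<1$ (the case $p_-=p_+$ being that of a constant exponent, where the lemma is trivial), the far piece has measure between $\tfrac12|Q|$ and $|Q|$ once $|Q|$ is large, so its contribution is $\sim1$; adding the two pieces gives $\int_Q|Q|^{-p(x)/p_\infty}\,dx\sim1$, as needed. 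The delicate balance is precisely in the choice of $r$: small enough that the crude bound over the $\mathrm{LH}_\infty$-free region near the origin stays summable against $|Q|$, yet still a fixed positive power of $|Q|$ so that $\mathrm{LH}_\infty$ freezes $p(\cdot)$ near $p_\infty$ on almost all of $Q$.
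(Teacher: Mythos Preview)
The paper does not supply its own proof of this lemma; it is quoted verbatim from \cite{NaSa12} as a preliminary fact, so there is no in-paper argument to compare against. Your argument is essentially the standard one and is correct in outline and in almost every detail.

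One small slip: in the lower-bound step of part~(2) you write that for $\mu\le1$,
\[
\rho(\chi_Q/(\mu\lambda_0))\ge \mu^{-p_+}\,\rho(\chi_Q/\lambda_0),
\]
but for $0<\mu\le1$ and $p(x)\in[p_-,p_+]$ one has $\mu^{-p_-}\le\mu^{-p(x)}\le\mu^{-p_+}$, so the correct lower bound is $\mu^{-p_-}\rho(\chi_Q/\lambda_0)$, not $\mu^{-p_+}\rho(\chi_Q/\lambda_0)$. This does not damage the conclusion, since $\mu^{-p_-}\to\infty$ as $\mu\to0^+$ is all you need to force $\|\chi_Q\|_{L^{p(\cdot)}}\gtrsim|Q|^{1/p_\infty}$; just correct the exponent.

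Everything else checks out: the $\mathrm{LH}_0$ reduction in part~(1) via $(p_+(Q)-p_-(Q))(-\log|Q|)\lesssim1$ is exactly the mechanism behind the cited result, and in part~(2) your $Q$-adapted splitting radius $r=|Q|^{p_-/(np_+)}$ is a clean way to quarantine the region where $\mathrm{LH}_\infty$ gives no information while keeping its volume under control. The caveat that the constant-exponent case $p_-=p_+$ must be set aside (so that $p_-/p_+<1$ and the far piece occupies most of $Q$) is appropriate and harmless.
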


\begin{lemma} {\rm \cite[Lemma 2.2]{CFN12}, \cite[Lemma 2.17]{NoSa12}} \label{lem:190410-1}
Let $p(\cdot)\,:\,\mathbb{R}^n \to [1,\infty)$ be such that $p_+<\infty$.
Then given any set $\Omega$ and any measurable function $f$,
\begin{enumerate}
\item[$(1)$]
if $\|f\chi_{\Omega}\|_{p(\cdot)} \le 1$, then
$\|f\chi_{\Omega}\|_{p(\cdot)}^{p_+(\Omega)}
\le
\int\limits_{\Omega} |f(x)|^{p(x)} {\rm d}x
\le
\|f\chi_{\Omega}\|_{p(\cdot)}^{p_-(\Omega)}$
and
\[
\int\limits_{\Omega} |f(x)|^{p(x)} {\rm d}x
\le
\|f\|_{p(\cdot)},
\]
\item[$(2)$]
if $\|f\chi_{\Omega}\|_{p(\cdot)} \ge 1$, then
$\|f\chi_{\Omega}\|_{p(\cdot)}^{p_-(\Omega)}
\le
\int\limits_{\Omega} |f(x)|^{p(x)} {\rm d}x
\le
\|f\chi_{\Omega}\|_{p(\cdot)}^{p_+(\Omega)}$.
\end{enumerate}
\end{lemma}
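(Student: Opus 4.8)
The plan is to reduce both assertions to the \emph{unit-modular identity}: whenever $0<\|f\chi_\Omega\|_{p(\cdot)}<\infty$,
\[ \int_\Omega \left(\frac{|f(x)|}{\|f\chi_\Omega\|_{p(\cdot)}}\right)^{p(x)}{\rm d}x=1 . \]
I would prove this first. Put $\lambda_0\equiv\|f\chi_\Omega\|_{p(\cdot)}$. Choosing admissible $\lambda_j\downarrow\lambda_0$ in the defining infimum and applying Fatou's lemma gives $\int_\Omega(|f(x)|/\lambda_0)^{p(x)}{\rm d}x\le 1$. For the reverse inequality, observe that every $\lambda<\lambda_0$ fails the constraint, i.e.\ $\int_\Omega(|f(x)|/\lambda)^{p(x)}{\rm d}x>1$; fixing one admissible $\lambda_1>\lambda_0$ and noting that, for $\lambda$ in a left neighbourhood of $\lambda_0$, the integrand is dominated by $(\lambda_1/\lambda)^{p_+}(|f(x)|/\lambda_1)^{p(x)}$ — here the hypothesis $p_+<\infty$ is used — the dominated convergence theorem yields $\int_\Omega(|f(x)|/\lambda_0)^{p(x)}{\rm d}x=\lim_{\lambda\uparrow\lambda_0}\int_\Omega(|f(x)|/\lambda)^{p(x)}{\rm d}x\ge 1$. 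The degenerate cases $\lambda_0\in\{0,\infty\}$ and $|\Omega|=0$ make all the asserted inequalities trivial, so one may assume $0<\lambda_0<\infty$ from now on.

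With the identity in hand, rewrite it as $1=\int_\Omega|f(x)|^{p(x)}\lambda_0^{-p(x)}{\rm d}x$ and compare the exponent $p(x)$ with $p_-(\Omega)$ and $p_+(\Omega)$. In the setting of $(1)$, where $0<\lambda_0\le 1$, monotonicity of $t\mapsto\lambda_0^{t}$ gives $\lambda_0^{-p_-(\Omega)}\le\lambda_0^{-p(x)}\le\lambda_0^{-p_+(\Omega)}$ for a.e.\ $x\in\Omega$, and multiplying by $|f(x)|^{p(x)}$ and integrating produces
\[ \lambda_0^{p_+(\Omega)}\le\int_\Omega|f(x)|^{p(x)}{\rm d}x\le\lambda_0^{p_-(\Omega)} , \]
which is the two-sided estimate of $(1)$. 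In the setting of $(2)$, where $\lambda_0\ge 1$, the chain of inequalities reverses to $\lambda_0^{-p_+(\Omega)}\le\lambda_0^{-p(x)}\le\lambda_0^{-p_-(\Omega)}$, and the same integration gives $\lambda_0^{p_-(\Omega)}\le\int_\Omega|f(x)|^{p(x)}{\rm d}x\le\lambda_0^{p_+(\Omega)}$, which is $(2)$.

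Finally, for the last display of $(1)$: from $\int_\Omega|f(x)|^{p(x)}{\rm d}x\le\lambda_0^{p_-(\Omega)}$, together with $\lambda_0\le 1$ and $p_-(\Omega)\ge 1$, one gets $\lambda_0^{p_-(\Omega)}\le\lambda_0=\|f\chi_\Omega\|_{p(\cdot)}$, and then $\|f\chi_\Omega\|_{p(\cdot)}\le\|f\|_{p(\cdot)}$ by monotonicity of the norm (since $|f\chi_\Omega|\le|f|$ pointwise). The only delicate point in the whole argument is the dominated-convergence step establishing the unit-modular identity, which genuinely needs $p_+<\infty$; everything downstream of it is an elementary manipulation of powers. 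Since this is a classical fact about variable Lebesgue spaces, one could alternatively just cite \cite[Lemma~2.2]{CFN12} and \cite[Lemma~2.17]{NoSa12}, but the sketch above is self-contained.
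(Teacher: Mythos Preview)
Your proof is correct. The paper does not give its own proof of this lemma; it simply states it with attribution to \cite[Lemma~2.2]{CFN12} and \cite[Lemma~2.17]{NoSa12}, so there is nothing to compare your argument against. What you have written is the standard derivation: establish the norm--modular identity $\int_\Omega(|f|/\lambda_0)^{p(x)}\,{\rm d}x=1$ for $\lambda_0=\|f\chi_\Omega\|_{p(\cdot)}\in(0,\infty)$ via Fatou from above and dominated convergence from below (the latter being the one place where $p_+<\infty$ is genuinely needed), and then read off the inequalities by comparing $\lambda_0^{-p(x)}$ with $\lambda_0^{-p_\pm(\Omega)}$ according to whether $\lambda_0\le 1$ or $\lambda_0\ge 1$. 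Your treatment of the degenerate cases and of the final display in $(1)$ is also fine.
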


\begin{lemma} \label{lem:190617-2}
Let $p(\cdot) :\mathbb{R}^n \to [1, \infty)$ and $f$ be a measurable function.
Then $\|f\|_{p(\cdot)} \le 1$ if and only if
$\int_{\mathbb{R}^n} |f(x)|^{p(x)} {\rm d}x \le 1$.
\end{lemma}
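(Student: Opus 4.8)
The plan is to argue straight from the Luxemburg-type definition of $\|\cdot\|_{p(\cdot)}$, using only that the modular $\rho(f)\equiv\int_{\mathbb{R}^n}|f(x)|^{p(x)}\,{\rm d}x$ behaves monotonically under dilation. Write $S(f)\equiv\{\lambda>0\,:\,\rho(f/\lambda)\le1\}$, so that by definition $\|f\|_{p(\cdot)}=\inf S(f)$, with the usual convention $\inf\emptyset=+\infty$. The implication ``$\rho(f)\le1\Rightarrow\|f\|_{p(\cdot)}\le1$'' is then immediate, since $\rho(f)\le1$ says precisely that $1\in S(f)$, whence $\|f\|_{p(\cdot)}=\inf S(f)\le1$.

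For the converse, I would first record that $\lambda\mapsto\rho(f/\lambda)$ is non-increasing on $(0,\infty)$: for each fixed $x$ the map $t\mapsto t^{p(x)}$ is non-decreasing on $[0,\infty)$ because $p(x)\ge1>0$, while $|f(x)|/\lambda$ decreases as $\lambda$ increases. Hence $S(f)$ is an interval unbounded to the right, and if $\|f\|_{p(\cdot)}=\inf S(f)\le1$ then every $\lambda>1$ belongs to $S(f)$, i.e. $\rho(f/\lambda)\le1$ for all $\lambda>1$. Now take any sequence $\lambda_k\downarrow1$: for each $x$, the quantities $(|f(x)|/\lambda_k)^{p(x)}$ increase monotonically to $|f(x)|^{p(x)}$, so the monotone convergence theorem yields $\rho(f)=\lim_{k\to\infty}\rho(f/\lambda_k)\le1$, which is the desired conclusion. (If $S(f)=\emptyset$ then $\|f\|_{p(\cdot)}=+\infty$ and there is nothing to prove.)

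I do not expect any real obstacle here. The single point that must not be glossed over is the passage to the limit $\lambda\downarrow1$: one cannot simply substitute $\lambda=1$ when the infimum defining the norm equals $1$ without being attained, and it is exactly the monotone convergence theorem that bridges this gap. This is also why the hypothesis needs nothing more than $p(\cdot)$ being finite-valued — no upper bound on $p_+$ enters the argument.
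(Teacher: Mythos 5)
Your proof is correct. The paper states this lemma without proof (it is the standard ``unit ball property'' of the Luxemburg norm), and your argument is precisely the standard one: the direction $\rho(f)\le1\Rightarrow\|f\|_{p(\cdot)}\le1$ is immediate from the definition, and the converse follows from the monotonicity of $\lambda\mapsto\rho(f/\lambda)$ together with monotone convergence as $\lambda\downarrow1$, which correctly handles the case where the infimum is not attained. Your remark that no hypothesis $p_+<\infty$ is needed is also accurate, since only $p(x)\ge1$ and finiteness of $p(x)$ pointwise enter the argument.
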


\begin{remark}
Let $Q$ be a cube.
In Lemmas \ref{lem:190410-1} and \ref{lem:190617-2},
let $f=w^{\frac{1}{p(\cdot)}}\chi_Q$
to obtain the following equivalence:
\begin{equation}\label{eq:191116-1}
\|\chi_Q\|_{L^{p(\cdot)}(w)} \le 1 
\Longleftrightarrow 
\int_{Q} w(x) {\rm d}x \le 1.
\end{equation}
A direct consequence of 
(\ref{eq:191116-1}) is the following:
\begin{enumerate}
\item[$(1)$]
If
$\|\chi_Q\|_{L^{p(\cdot)}(w)}\le1$,
then
\begin{equation}
\|\chi_Q\|_{L^{p(\cdot)}(w)}^{p_+(Q)}
\le
w(Q)
\le
\|\chi_Q\|_{L^{p(\cdot)}(w)}^{p_-(Q)}.
\end{equation}
\item[$(2)$]
If
$\|\chi_Q\|_{L^{p(\cdot)}(w)}\ge1$,
then
\begin{equation}
\|\chi_Q\|_{L^{p(\cdot)}(w)}^{p_-(Q)}
\le
w(Q)
\le
\|\chi_Q\|_{L^{p(\cdot)}(w)}^{p_+(Q)}.
\end{equation}
\end{enumerate}
\end{remark}

The following inequality is a key tool 
which is used in this paper.
Although \cite[Lemma 2.7]{CFN12}
considers Borel measures,
we can consider Lebesgue measures.

\begin{lemma}{\rm\cite[Lemma 2.7]{CFN12}}\label{lem:190411-11}
Let $\mu$ be a Lebesgume measure defined on a measurable set $G$.
Given a set $G$ and two exponent $s(\cdot)$ and $r(\cdot)$ such that
\[
|s(y)-r(y)| \lesssim \frac{1}{\log(e+|y|)} \quad (y \in G).
\]
Then for all $t \in [1,\infty)$
and $f \in L^0({\mathbb R}^n)$ with $0 \le f \le 1$,
\[
\int_G f(y)^{s(y)}d\mu(y)
\lesssim
\int_G f(y)^{r(y)}d\mu(y)
+
\int_G \frac{d\mu(y)}{(e+|y|)^{t n r_-(G)}}.
\]
\end{lemma}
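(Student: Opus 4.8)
The plan is to split the domain $G$ according to the size of $f$ relative to the tail weight: set $G_1 \equiv \{y \in G : f(y) \ge (e+|y|)^{-tn}\}$ and $G_2 \equiv G \setminus G_1$. The idea is that on $G_1$ one can interchange the exponents $s(\cdot)$ and $r(\cdot)$ at the cost of a bounded factor, while on $G_2$ the value $f(y)^{s(y)}$ is so small that it is pointwise dominated by the tail $(e+|y|)^{-tn\,r_-(G)}$. Write $C$ for the constant in $|s(y)-r(y)| \le C/\log(e+|y|)$, and note that $tn\ge1$ and $e+|y|\ge e$, so $(e+|y|)^{-tn}\le1$.

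For the $G_1$-piece I would factor $f(y)^{s(y)} = f(y)^{r(y)}f(y)^{s(y)-r(y)}$ (note $f(y)>0$ on $G_1$) and control the last factor. If $s(y)\ge r(y)$, then $f(y)^{s(y)-r(y)}\le1$ because $0\le f\le1$. If $s(y)<r(y)$, then using $f(y)\ge(e+|y|)^{-tn}$ together with $r(y)-s(y)\le C/\log(e+|y|)$ one gets $f(y)^{s(y)-r(y)}\le(e+|y|)^{tn(r(y)-s(y))}\le(e+|y|)^{tnC/\log(e+|y|)}=e^{tnC}$. Either way $f(y)^{s(y)}\le e^{tnC}f(y)^{r(y)}$ on $G_1$, hence $\int_{G_1}f(y)^{s(y)}\,d\mu(y)\le e^{tnC}\int_G f(y)^{r(y)}\,d\mu(y)$.

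For the $G_2$-piece, on which $0\le f(y)<(e+|y|)^{-tn}\le1$, raising to the positive power $s(y)$ gives $f(y)^{s(y)}<(e+|y|)^{-tn\,s(y)}$. Since $s(y)\ge r(y)-C/\log(e+|y|)\ge r_-(G)-C/\log(e+|y|)$, this is at most $(e+|y|)^{-tn\,r_-(G)}\cdot(e+|y|)^{tnC/\log(e+|y|)}=e^{tnC}(e+|y|)^{-tn\,r_-(G)}$, so $\int_{G_2}f(y)^{s(y)}\,d\mu(y)\le e^{tnC}\int_G(e+|y|)^{-tn\,r_-(G)}\,d\mu(y)$. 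Adding the two estimates gives the assertion with implicit constant $e^{tnC}$, depending only on $t$, $n$ and the log-H\"older constant $C$.

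The computation is elementary, so I do not anticipate a genuine obstacle; the one point to get right is the choice of threshold, which must be $(e+|y|)^{-tn}$ precisely because $tn\,r_-(G)$ is the decay exponent of the tail weight, together with the observation that the log-H\"older modulus $C/\log(e+|y|)$ is exactly what forces the exponent-swap factor $(e+|y|)^{tnC/\log(e+|y|)}$ to collapse to the constant $e^{tnC}$, uniformly in $y$. The only minor subtlety is the passage from $s(y)$ to $r_-(G)$ on the small-value region $G_2$, but this is handled by the very same two ideas.
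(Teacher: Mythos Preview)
Your argument is correct; the splitting at the threshold $(e+|y|)^{-tn}$ and the cancellation $(e+|y|)^{tnC/\log(e+|y|)}=e^{tnC}$ are exactly the mechanism that makes this lemma work. Note that the paper does not supply its own proof of this statement but simply cites \cite[Lemma~2.7]{CFN12}; your write-up reproduces the standard proof found there, so there is nothing further to compare.
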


Finally,
we recall the localization principle due to H\"asto.
\begin{lemma}{\rm \cite{Hasto2009}} \label{lem:190627-11}
Let $p(\cdot)\,:\,\mathbb{R}^n \to [1,\infty)$
 satisfy conditions 
$(\ref{eq:190613-1})$ and 
$(\ref{eq:190613-2})$.
Then
\[
\|f\|_{L^{p(\cdot)}}
\sim
\left(
\sum_{Q \in {\mathcal D}_{0,(1,1,\ldots,1)}}
(\|f\chi_Q\|_{L^{p(\cdot)}})^{p_\infty}
\right)^{\frac1{p_\infty}}
\]
for all measurable functions $f$.
\end{lemma}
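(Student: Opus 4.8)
The plan is to prove the two one-sided estimates separately, each time reducing to a modular computation by means of Lemmas \ref{lem:190410-1} and \ref{lem:190617-2} after normalizing (both sides are positively $1$-homogeneous in $f$, so we may rescale $f$, and the cases $f=0$ a.e. or infinite quantities are trivial). Write $\mathcal{Q}_0 \equiv \mathcal{D}_{0,(1,\ldots,1)}$ for the partition of $\mathbb{R}^n$ into cubes of volume $1$; for $Q \in \mathcal{Q}_0$ let $x_Q$ be its centre, and note that $e+|x| \sim e+|x_Q|$ for $x \in Q$, so that $\sum_{Q \in \mathcal{Q}_0}(e+|x_Q|)^{-\kappa} \sim \int_{\mathbb{R}^n}(e+|x|)^{-\kappa}\,{\rm d}x<\infty$ for every $\kappa>n$. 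Fix $\kappa=n+1$ once and for all. The hypotheses enter only through ${\rm LH}_\infty$, which forces $1 \le p_\infty \le p_+<\infty$ and, for every $Q \in \mathcal{Q}_0$, $|p_+(Q)-p_\infty|+|p_-(Q)-p_\infty| \lesssim 1/\log(e+|x_Q|)$; equivalently $(e+|x_Q|)^{|p_\pm(Q)-p_\infty|}$ is bounded uniformly in $Q$. Throughout put $a_Q \equiv \|f\chi_Q\|_{L^{p(\cdot)}}$ and $m_Q \equiv \int_Q |f(x)|^{p(x)}\,{\rm d}x$.

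For the estimate $\bigl(\sum_Q a_Q^{p_\infty}\bigr)^{1/p_\infty} \lesssim \|f\|_{L^{p(\cdot)}}$ I would assume $\|f\|_{L^{p(\cdot)}}\le1$, so that $m_Q\le1$ and $a_Q\le1$ for every $Q$, and then invoke Lemma \ref{lem:190410-1}(1) to get $a_Q^{p_+(Q)} \le m_Q$. The key step is a dichotomy for $a_Q^{p_\infty}$: if $a_Q \ge (e+|x_Q|)^{-\kappa}$, write $a_Q^{p_\infty}=a_Q^{p_+(Q)}\,a_Q^{p_\infty-p_+(Q)}$ and bound the second factor by $(e+|x_Q|)^{\kappa|p_+(Q)-p_\infty|}\lesssim1$ using ${\rm LH}_\infty$, so $a_Q^{p_\infty}\lesssim m_Q$; if instead $a_Q<(e+|x_Q|)^{-\kappa}$, then $a_Q^{p_\infty}\le(e+|x_Q|)^{-\kappa p_\infty}\le(e+|x_Q|)^{-\kappa}$. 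In either case $a_Q^{p_\infty}\lesssim m_Q+(e+|x_Q|)^{-\kappa}$, and summing over $Q\in\mathcal{Q}_0$, together with $\sum_Q m_Q=\int_{\mathbb{R}^n}|f|^{p(x)}\,{\rm d}x\le1$ and the choice $\kappa=n+1>n$, yields $\sum_Q a_Q^{p_\infty}\lesssim1$, as required.

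For the reverse estimate I would assume $\sum_Q a_Q^{p_\infty}=1$, so $a_Q\le1$ for all $Q$, and use Lemma \ref{lem:190410-1}(1) in the other direction: $m_Q \le a_Q^{p_-(Q)}$. The same dichotomy, now comparing $p_-(Q)$ with $p_\infty$, gives $a_Q^{p_-(Q)}\lesssim a_Q^{p_\infty}+(e+|x_Q|)^{-\kappa}$: when $a_Q\ge(e+|x_Q|)^{-\kappa}$ the factor $a_Q^{p_-(Q)-p_\infty}$ is $\le\max\{1,(e+|x_Q|)^{\kappa|p_-(Q)-p_\infty|}\}\lesssim1$ by ${\rm LH}_\infty$, while when $a_Q<(e+|x_Q|)^{-\kappa}$ one has $a_Q^{p_-(Q)}\le a_Q^{p_-}\le(e+|x_Q|)^{-\kappa p_-}\le(e+|x_Q|)^{-\kappa}$. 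Summing gives $\int_{\mathbb{R}^n}|f|^{p(x)}\,{\rm d}x=\sum_Q m_Q\le K$ for a constant $K\ge1$ depending only on $n$ and the ${\rm LH}_\infty$ constant; then $\int_{\mathbb{R}^n}|f(x)/K|^{p(x)}\,{\rm d}x\le K^{-p_-}\int_{\mathbb{R}^n}|f|^{p(x)}\,{\rm d}x\le1$ since $K\ge1$ and $p_-\ge1$, so $\|f\|_{L^{p(\cdot)}}\le K$ by Lemma \ref{lem:190617-2}. Undoing the two normalizations finishes the proof of the equivalence.

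The only delicate point, and the crux of the whole argument, is the dichotomy at the polynomial threshold $(e+|x_Q|)^{-\kappa}$: on a cube far from the origin the exponent $p_\pm(Q)$ may differ from $p_\infty$, and then $a_Q^{p_\pm(Q)}$ and $a_Q^{p_\infty}$ need not be comparable when $a_Q$ is extremely small; it is precisely the ${\rm LH}_\infty$ rate $1/\log(e+|x_Q|)$ that makes the offending factor $a_Q^{\pm(p_\pm(Q)-p_\infty)}$ either uniformly bounded (above the threshold) or absorbable into the summable tail $\sum_Q(e+|x_Q|)^{-\kappa}$ (below it). This is the same mechanism packaged in Lemma \ref{lem:190411-11}, which one could alternatively invoke after first decomposing $f$ according to $\{|f|\le1\}$ and $\{|f|>1\}$; the self-contained dichotomy above seems cleaner in the present unit-scale setting.
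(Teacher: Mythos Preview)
The paper does not supply a proof of this lemma; it is simply quoted from H\"ast\"o \cite{Hasto2009}. Your argument is correct and self-contained: the two normalized modular computations, driven by Lemma~\ref{lem:190410-1}(1) and the dichotomy at the threshold $(e+|x_Q|)^{-\kappa}$, are exactly what is needed, and the summability of the tail $\sum_Q(e+|x_Q|)^{-\kappa}$ for $\kappa>n$ closes both directions. Your remark that only ${\rm LH}_\infty$ enters (and that it already forces $p_+<\infty$, which is required for Lemma~\ref{lem:190410-1}) is also correct --- at the fixed unit scale the local log-H\"older condition (\ref{eq:190613-1}) plays no role, since the comparison $|p_\pm(Q)-p_\infty|\lesssim 1/\log(e+|x_Q|)$ follows from ${\rm LH}_\infty$ alone via $e+|x|\sim e+|x_Q|$ on $Q$.

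The mechanism you isolate is precisely the one packaged in Lemma~\ref{lem:190411-11}; your direct dichotomy on the numbers $a_Q$ is a clean discrete analogue of that continuous estimate, well suited to the unit-cube partition. Since the paper offers nothing to compare against here, there is no divergence to report: you have provided a valid proof where the paper gives none.
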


\subsection{Weights}
Here and below, we assume that $p(\cdot)$ satisfies conditions $(\ref{eq:190613-1})$ and $(\ref{eq:190613-2})$.
Let $w \in A_{p(\cdot)}^{\rm loc}$. 
First, remark that by the definition of $A_{p(\cdot)}^{\rm loc}$, we have
\[
\left\|
\left([w]_{A_{p(\cdot)}^{\rm loc}}|Q|\right)^{-1}
\|\chi_Q\|_{L^{p'(\cdot)}(\sigma)}\chi_Q\right\|_{L^{p(\cdot)}(w)}
\le 1,
\]
or equivalently,
\begin{equation} \label{eq:190624-1}
\int_Q
\left(\frac{\|\chi_Q\|_{L^{p'(\cdot)}(\sigma)}}{[w]_{A_{p(\cdot)}^{\rm loc}}|Q|}\right)^{p(x)}
w(x)dx \le 1.
\end{equation}
for $Q \in {\mathcal Q}$ with $|Q| \le 1$.

First, we recall an equivalent definition of $A_{\infty}$\,;\,
we refer to \cite[Theorem 7.3.3]{Grafakos14} for its proof.

\begin{lemma} {\rm \cite[Theorem 7.3.3]{Grafakos14}}\label{lem:190618-1}
Given a weight w, the following are equivalent:
\begin{enumerate}
\item[$(1)$]
$w \in A_\infty$.
\item[$(2)$]
There exist constants $0<C_1, C_2<1$ such that given any cube $Q$ and any measurable set $E \subset Q$
with  $|E|>C_1|Q|$, then $w(E)>C_2w(Q)$.
\end{enumerate}
If $(2)$ holds,
then 
it can be arranged that $C_1$ and $C_2$ depend only 
on the $A_\infty$ constant of $w$.
\end{lemma}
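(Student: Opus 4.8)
The plan is to establish the two implications separately; recalling that $w\in A_\infty$ means $w\in A_p$ for some $p\in[1,\infty)$, the implication $(1)\Rightarrow(2)$ is elementary, while $(2)\Rightarrow(1)$ carries all the weight. For $(1)\Rightarrow(2)$, since $A_1\subset A_p$ for every $p>1$ we may assume $w\in A_p$ with $p>1$. Given a cube $Q$ and a measurable set $E\subset Q$, I would write $|E|=\int_E w^{1/p}\,w^{-1/p}\,{\rm d}x$ and apply H\"older's inequality (the constant-exponent case of Lemma \ref{thm:gHolder}) with exponents $p$ and $p'$ together with the definition of $[w]_{A_p}$ to obtain
\[
\frac{|E|}{|Q|}\le [w]_{A_p}^{1/p}\left(\frac{w(E)}{w(Q)}\right)^{1/p},
\qquad\text{equivalently}\qquad
\frac{w(E)}{w(Q)}\ge [w]_{A_p}^{-1}\left(\frac{|E|}{|Q|}\right)^{p}.
\]
Then one fixes any $C_1\in(0,1)$ and sets $C_2\equiv[w]_{A_p}^{-1}C_1^{p}$; since $[w]_{A_p}\ge1$ we get $C_2<1$, and $|E|>C_1|Q|$ forces $w(E)>C_2\,w(Q)$, which is $(2)$. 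Here $C_1$ is free and $C_2$ depends only on $[w]_{A_p}$.

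For $(2)\Rightarrow(1)$, I would first rephrase $(2)$ by complementation: with $\alpha\equiv1-C_1$ and $\beta\equiv1-C_2$ in $(0,1)$, every cube $Q$ and every measurable $G\subset Q$ satisfy $|G|\le\alpha|Q|\Rightarrow w(G)\le\beta\,w(Q)$. The goal is to bootstrap this into the quantitative bound
\[
\frac{w(E)}{w(Q)}\le C\left(\frac{|E|}{|Q|}\right)^{\delta}
\qquad(E\subset Q\ \text{measurable})
\]
for some $C>0$ and $\delta>0$ depending only on $\alpha,\beta,n$. The mechanism is: given $E\subset Q$ with $|E|$ small, subdivide $Q$ into a large number (depending on $\alpha,\beta,n$) of congruent subcubes, separate the "good" subcubes on which $\chi_E$ has small average from the remaining "bad" ones, apply the displayed implication on each good subcube, and control the union of the bad subcubes — whose total Lebesgue measure is a fixed fraction of $|E|$ — by feeding it back into the same implication; iterating the subdivision produces the asserted geometric decay. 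Once this estimate is in hand, it is classical that it forces a reverse H\"older inequality for $w$, and that the self-improving (Gehring) property of reverse H\"older inequalities yields $w\in A_p$ for $p$ large enough, i.e. $w\in A_\infty$; since all constants and exponents introduced along the way depend only on $\alpha,\beta,n$, the resulting $A_\infty$ constant is controlled by $C_1$ and $C_2$, giving the final assertion of the lemma.

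I expect the iteration leading to the power bound $w(E)/w(Q)\lesssim(|E|/|Q|)^{\delta}$ to be the main obstacle: condition $(2)$ only compares the $w$-measure and the Lebesgue measure of a subset of a \emph{single} cube, so to extract genuine power decay one must localize to many subcubes and recursively estimate the bad part, carefully tracking how the constants degrade at each stage. Choosing the number of subcubes in the subdivision large enough — in terms of $\alpha$, $\beta$ and the dimension — is precisely what makes each step strictly contracting and the iteration converge.
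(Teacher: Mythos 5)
First, note that the paper does not prove this lemma at all: it is quoted directly from \cite[Theorem 7.3.3]{Grafakos14}, so your attempt can only be measured against the standard proof given there. Your implication $(1)\Rightarrow(2)$ is correct, with two small remarks: choosing $\alpha=1-C_1$ in the complementation later yields only a non-strict inequality, so $\alpha$ should be taken slightly smaller; and your constants depend on $p$ and $[w]_{A_p}$ rather than only on the $A_\infty$ constant, which is what the last sentence of the lemma asks for, so closing that point needs the quantitative bookkeeping that the cited proof carries out. The iteration you sketch to pass from the complemented condition to the power bound $w(E)/w(Q)\lesssim(|E|/|Q|)^{\delta}$ is only an outline, but it is the standard Calder\'on--Zygmund-type argument and can be completed, and ``power bound $\Rightarrow$ reverse H\"older'' is indeed the classical distribution-function computation.

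The genuine gap is your final step. Gehring's lemma only raises the exponent in the reverse H\"older inequality $\bigl(\frac{1}{|Q|}\int_Q w^{1+\epsilon}\,dx\bigr)^{1/(1+\epsilon)}\le \frac{C}{|Q|}\int_Q w\,dx$; no reverse H\"older inequality for $w$, however large its exponent, gives any control of $\int_Q w^{-1/(p-1)}\,dx$, which is exactly what membership in $A_p$ requires — improved positive integrability of $w$ says nothing about how fast $w$ may vanish. The classical way to land in $\bigcup_p A_p$ (and the mechanism behind \cite[Theorem 7.3.3]{Grafakos14}) is a duality/symmetrization step: from the power bound deduce the smallness condition with the roles of $dx$ and $w\,dx$ interchanged (if $|E|>\beta''|Q|$ with $C(1-\beta'')^{\delta}\le\frac12$, then $w(Q\setminus E)\le\frac12 w(Q)$, hence $w(E)\ge\frac12 w(Q)$), observe that $w\,dx$ is doubling (compare concentric cubes of comparable side length and iterate), and then rerun the Calder\'on--Zygmund/reverse-H\"older argument for the weight $w^{-1}$ relative to the measure $w\,dx$; the resulting inequality $\bigl(\frac{1}{w(Q)}\int_Q w^{-\epsilon'}\,dx\bigr)^{1/\epsilon'}\le C\,|Q|/w(Q)$ is literally the statement $w\in A_{1+1/\epsilon'}$. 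Without this second pass your argument never produces any $A_p$ class, so as written the proof of $(2)\Rightarrow(1)$ is incomplete.
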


%\begin{proposition}\label{prop:190411-1}
%Let $w$ be a weight,
%and let ${\mathfrak D}$ be a dyadic grid.
%Then the following are equivalent:
%\begin{enumerate}
%\item
%$w \in A_{p(\cdot)}({\mathfrak D})$.
%\item
%For all $Q \in {\mathfrak D}$ and $f \in L^0({\mathbb R}^n)$,
%$m_Q(|f|)\|\chi_Q\|_{L^{p(\cdot)}(w)} \le C \|f\chi_Q\|_{L^{p(\cdot)}(w)}$.
%\end{enumerate}
%\end{proposition}

The next lemma is important in this paper.
As we have said, Rychkov extended a local weight mirror-symmetrically.
However, in the setting of variable exponent, this way is no longer available.
So we propose the different extension.

\begin{lemma}\label{lem:190626-1}
Let $w \in A^{\rm loc}_{p(\cdot)}({\mathfrak D})$.
Let $I \in {\mathfrak D}$ be a cube with $|I|=1$.
Define
\[
\overline{w}(x)
\equiv 
\begin{cases}
(\|\chi_I\|_{L^{p(\cdot)}(w)})^{p(x)}&(x \in {\mathbb R}^n \setminus I)\\
w(x)&(x \in I).
\end{cases}
\]
Then 
$\overline{w} \in A_{p(\cdot)}({\mathfrak D})$
and
$[\overline{w}]_{A_{p(\cdot)}({\mathfrak D})} \lesssim [w]_{A^{\rm loc}_{p(\cdot)}({\mathfrak D})}$.
\end{lemma}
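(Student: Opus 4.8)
\textbf{Proof proposal for Lemma~\ref{lem:190626-1}.}
The plan is to estimate the $A_{p(\cdot)}({\mathfrak D})$ constant of $\overline w$ by testing against an arbitrary cube $Q \in {\mathfrak D}$ and splitting into the standard three cases according to the position of $Q$ relative to $I$. Write $\overline\sigma \equiv \overline w^{-1/(p(\cdot)-1)}$; note that on ${\mathbb R}^n \setminus I$ we have $\overline w(x) = (\|\chi_I\|_{L^{p(\cdot)}(w)})^{p(x)}$, so $\overline\sigma(x) = (\|\chi_I\|_{L^{p(\cdot)}(w)})^{-p'(x)}$ there, while on $I$ we have $\overline w = w$ and $\overline\sigma = \sigma$. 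A useful normalization to keep in mind: by the equivalence \eqref{eq:191116-1} applied to $I$, and the fact that $[w]_{A^{\rm loc}_{p(\cdot)}({\mathfrak D})} \ge |I|^{-1}\|\chi_I\|_{L^{p(\cdot)}(w)}\|\chi_I\|_{L^{p'(\cdot)}(\sigma)} \gtrsim 1$ (Hölder), one controls $\|\chi_I\|_{L^{p(\cdot)}(w)}$ both above and below in terms of the local constant; I would record at the outset that $c \le \|\chi_I\|_{L^{p(\cdot)}(w)} \|\chi_I\|_{L^{p'(\cdot)}(\sigma)} \le [w]_{A^{\rm loc}_{p(\cdot)}({\mathfrak D})}$.

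First, the case $|Q| \le 1$. Because ${\mathfrak D}$ is a dyadic grid and $|I| = 1$, such a $Q$ is either contained in $I$ or disjoint from $I$ (up to a null set). If $Q \subset I$ then $\overline w = w$, $\overline\sigma = \sigma$ on $Q$ and the testing quantity is exactly bounded by $[w]_{A^{\rm loc}_{p(\cdot)}({\mathfrak D})}$. If $Q \cap I = \emptyset$, then on $Q$ the weight $\overline w$ is the explicit power $(\|\chi_I\|_{L^{p(\cdot)}(w)})^{p(\cdot)}$; here $\|\chi_Q\|_{L^{p(\cdot)}(\overline w)} = \|\chi_I\|_{L^{p(\cdot)}(w)} \|\chi_Q\|_{L^{p(\cdot)}}$ and $\|\chi_Q\|_{L^{p'(\cdot)}(\overline\sigma)} = (\|\chi_I\|_{L^{p(\cdot)}(w)})^{-1}\|\chi_Q\|_{L^{p'(\cdot)}}$, so the product telescopes to $|Q|^{-1}\|\chi_Q\|_{L^{p(\cdot)}}\|\chi_Q\|_{L^{p'(\cdot)}}$, which is $\lesssim 1$ for $|Q| \le 1$ by Lemma~\ref{lem:190613-3}(1) and the generalized Hölder inequality. (If $Q$ straddles I one would instead use that such $Q$ must then contain $I$, which is the next case.)

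Next, the case $|Q| \ge 1$, which forces $I \subseteq Q$ since both are dyadic and $|I| = 1$. Split $Q = I \cup (Q\setminus I)$. On $Q \setminus I$ the weight is the explicit power, so by Lemma~\ref{lem:190613-3} (for the piece of $Q\setminus I$ of measure $\ge 1$, or a decomposition into unit subcubes) and \eqref{eq:191116-1}, $\overline w(Q\setminus I) \sim (\|\chi_I\|_{L^{p(\cdot)}(w)})^{p_\infty} |Q|^{?}$; more robustly, I would estimate $\|\chi_Q\|_{L^{p(\cdot)}(\overline w)}$ by noting $\|\chi_I\|_{L^{p(\cdot)}(\overline w)} = \|\chi_I\|_{L^{p(\cdot)}(w)}$ and $\|\chi_{Q\setminus I}\|_{L^{p(\cdot)}(\overline w)} = \|\chi_I\|_{L^{p(\cdot)}(w)}\,\|\chi_{Q\setminus I}\|_{L^{p(\cdot)}}$, and similarly for the $\sigma$-side with $p'(\cdot)$ and exponent $-1$. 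Using the triangle/quasi-triangle inequality in $L^{p(\cdot)}$ and in $L^{p'(\cdot)}$, the product $|Q|^{-1}\|\chi_Q\|_{L^{p(\cdot)}(\overline w)}\|\chi_Q\|_{L^{p'(\cdot)}(\overline\sigma)}$ is bounded by a sum of four cross terms. The pure ``outside'' term again telescopes to $|Q|^{-1}\|\chi_Q\|_{L^{p(\cdot)}}\|\chi_Q\|_{L^{p'(\cdot)}}$, which is $\lesssim 1$ because $|Q|\ge 1$ (Lemma~\ref{lem:190613-3}(2): both norms are $\sim |Q|^{1/p_\infty}$ and $|Q|^{1/p'_\infty}$). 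The pure ``inside'' term is $|Q|^{-1}\|\chi_I\|_{L^{p(\cdot)}(w)}\|\chi_I\|_{L^{p'(\cdot)}(\sigma)} \le |Q|^{-1}[w]_{A^{\rm loc}_{p(\cdot)}({\mathfrak D})} \le [w]_{A^{\rm loc}_{p(\cdot)}({\mathfrak D})}$. The two mixed terms are of the form $|Q|^{-1}\|\chi_I\|_{L^{p(\cdot)}(w)}\,(\|\chi_I\|_{L^{p(\cdot)}(w)})^{-1}\|\chi_{Q\setminus I}\|_{L^{p'(\cdot)}}$ and its symmetric partner; the factors of $\|\chi_I\|_{L^{p(\cdot)}(w)}$ cancel, leaving $|Q|^{-1}\|\chi_{Q\setminus I}\|_{L^{p'(\cdot)}} \cdot 1$, bounded by $|Q|^{-1}|Q|^{1/p'_\infty} \le 1$, and symmetrically $|Q|^{-1}|Q|^{1/p_\infty}\le 1$. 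Collecting the four contributions gives $[\overline w]_{A_{p(\cdot)}({\mathfrak D})}\lesssim [w]_{A^{\rm loc}_{p(\cdot)}({\mathfrak D})}$.

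The main obstacle I anticipate is \emph{not} the bookkeeping above but the precise comparison $\|\chi_{Q\setminus I}\|_{L^{p(\cdot)}(\overline w)} = \|\chi_I\|_{L^{p(\cdot)}(w)}\,\|\chi_{Q\setminus I}\|_{L^{p(\cdot)}}$: this identity holds because $\overline w^{1/p(\cdot)} = \|\chi_I\|_{L^{p(\cdot)}(w)}$ is a \emph{constant} on ${\mathbb R}^n\setminus I$, so pulling it out of the Luxemburg norm is exact — but one must check the edge case $\|\chi_I\|_{L^{p(\cdot)}(w)}$ versus $1$ does not interfere, which is where \eqref{eq:191116-1} and the two-sided bound on $\|\chi_I\|_{L^{p(\cdot)}(w)}$ recorded at the start are used. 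A secondary subtlety is justifying that the exponent $p(\cdot)$, although it need not be globally log-Hölder after this surgery, is irrelevant: $\overline w$ is defined so that $L^{p(\cdot)}(\overline w)$ on the outside region is literally a \emph{constant-weight} space, and all we ever invoke there is Lemma~\ref{lem:190613-3} and the generalized Hölder inequality, which only need ${\rm LH}_0$, ${\rm LH}_\infty$ for $p(\cdot)$ itself — assumptions in force throughout Section~2. \qed
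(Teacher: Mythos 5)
Your proposal follows the same route as the paper's proof: test on a dyadic cube $Q$, split according to the position of $Q$ relative to $I$, exploit that $\overline{w}{}^{1/p(\cdot)}$ is the constant $\|\chi_I\|_{L^{p(\cdot)}(w)}$ off $I$ so that it factors out of the Luxemburg norms and cancels against $\overline{\sigma}{}^{1/p'(\cdot)}=(\|\chi_I\|_{L^{p(\cdot)}(w)})^{-1}$, and use Lemma \ref{lem:190613-3} plus the local condition tested on $I$ when $I\subset Q$; the paper's third case is exactly your four-term expansion of the product of two two-term sums. Two of your statements, however, are not correct as written, though both are repairable with material you already recorded. First, ``$|Q|\ge 1$ forces $I\subseteq Q$'' is false: a dyadic cube of measure at least $1$ can also be disjoint from $I$. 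That missing subcase is handled by the same telescoping you did for small disjoint cubes, quoting Lemma \ref{lem:190613-3}(2) instead of (1), so the testing quantity is again $\sim |Q|^{-1}\|\chi_Q\|_{L^{p(\cdot)}}\|\chi_Q\|_{L^{p'(\cdot)}}\sim 1$ (this is precisely how the paper treats its case $I\cap Q=\emptyset$ without any size restriction). Second, in the case $I\subset Q$ the mixed term $|Q|^{-1}\|\chi_{Q\setminus I}\|_{L^{p(\cdot)}(\overline{w})}\,\|\chi_I\|_{L^{p'(\cdot)}(\overline{\sigma})}$ does \emph{not} telescope: it equals $|Q|^{-1}\|\chi_{Q\setminus I}\|_{L^{p(\cdot)}}\,\|\chi_I\|_{L^{p(\cdot)}(w)}\|\chi_I\|_{L^{p'(\cdot)}(\sigma)}$, because on $I$ the dual weight is $\sigma$ itself, not the reciprocal constant; the product over $I$ is only bounded by $[w]_{A^{\rm loc}_{p(\cdot)}({\mathfrak D})}$ (the two-sided bound you recorded at the outset), so this term is $\lesssim [w]_{A^{\rm loc}_{p(\cdot)}({\mathfrak D})}$ rather than $\le 1$. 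Since that is exactly the bound the lemma asserts, the final conclusion stands, and with these two corrections your argument coincides with the paper's.
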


\begin{proof}
Arithmetic shows that $w \in A^{\rm loc}_{p(\cdot)}({\mathfrak D})$ if and only if 
$\sigma \in A^{\rm loc}_{p'(\cdot)}({\mathfrak D})$.
We also note that $[w]_{A_{p(\cdot)}^{\rm loc}({\mathfrak D})} \gtrsim 1$ thanks to H\"older's inequality
(see Lemma \ref{thm:gHolder}). 
Write $\overline{\sigma}\equiv \overline{w}^{-\frac{1}{p(\cdot)-1}}$.
Let $Q \in {\mathfrak D}$.
We need to estimate
\[
\frac{1}{|Q|}\|\chi_Q\|_{L^{p(\cdot)}(\overline{w})}\|\chi_Q\|_{L^{p'(\cdot)}(\overline{\sigma})}.
\]
We distinguish three cases.
\begin{itemize}
\item
Suppose $I \cap Q=\emptyset$.
In this case, by virtue of Lemma \ref{lem:190613-3},
\[
\frac{1}{|Q|}\|\chi_Q\|_{L^{p(\cdot)}(\overline{w})}\|\chi_Q\|_{L^{p'(\cdot)}(\overline{\sigma})}
=
\frac{1}{|Q|}\|\chi_Q\|_{L^{p(\cdot)}}\|\chi_Q\|_{L^{p'(\cdot)}} \sim 1
\le [w]_{A_{p(\cdot)}^{\rm loc}({\mathfrak D})}.
\]
\item
Next,
suppose $Q \subset I$.
In this case,
since $w \in A^{\rm loc}_{p(\cdot)}({\mathfrak D})$,
\[
\frac{1}{|Q|}\|\chi_Q\|_{L^{p(\cdot)}(\overline{w})}\|\chi_Q\|_{L^{p'(\cdot)}(\overline{\sigma})}
=
\frac{1}{|Q|}\|\chi_Q\|_{L^{p(\cdot)}(w)}\|\chi_Q\|_{L^{p'(\cdot)}(\sigma)}
\le
[w]_{A^{\rm loc}_{p(\cdot)}({\mathfrak D})}.
\]
\item
Finally,
suppose $Q \supset I$.
In this case,
again by virtue of Lemma \ref{lem:190613-3} and
the fact that $w \in A^{\rm loc}_{p(\cdot)}({\mathfrak D})$,
\begin{align*}
\lefteqn{
\frac{1}{|Q|}\|\chi_Q\|_{L^{p(\cdot)}(\overline{w})}\|\chi_Q\|_{L^{p'(\cdot)}(\overline{\sigma})}
}\\
&\sim
\frac{1}{|Q|}
\left(\|\chi_{Q \setminus I}\|_{L^{p(\cdot)}(\overline{w})}+\|\chi_I\|_{L^{p(\cdot)}(\overline{w})}\right)
\left(\|\chi_{Q \setminus I}\|_{L^{p'(\cdot)}(\overline{\sigma})}+\|\chi_I\|_{L^{p'(\cdot)}(\overline{\sigma})}\right)\\
&\sim
\frac{1}{|Q|}
\left(|Q|^{\frac{1}{p_\infty}}\|\chi_{I}\|_{L^{p(\cdot)}(w)}+\|\chi_I\|_{L^{p(\cdot)}(w)}\right)
\left(\frac{|Q|^{\frac{1}{p_\infty'}}}{\|\chi_{I}\|_{L^{p(\cdot)}(w)}}
+\|\chi_I\|_{L^{p'(\cdot)}(\sigma)}
\right)\\
&\sim
\frac{1}{|Q|}
|Q|^{\frac{1}{p_\infty}}
\left(|Q|^{\frac{1}{p_\infty'}}
+\|\chi_{I}\|_{L^{p(\cdot)}(w)}\|\chi_I\|_{L^{p'(\cdot)}(\sigma)}
\right)\\
&\lesssim
[w]_{A^{\rm loc}_{p(\cdot)}({\mathfrak D})},
\end{align*}
as required.
\end{itemize}
\end{proof}

\begin{remark}\label{rem:190627-1}
Let $I\equiv(0,1)$ and $J\equiv(-1,0)$.
Define
$w(t)=t^{-\frac12}$ on $I$ and
$w(t)=1$ on $J$.
Although $w \in A_2(I)$, the $A_2$ class restricted to $I$,
$w$ is not in $A_2(I \cup J \cup \{0\})$.
\end{remark}

\begin{lemma} \label{lem:190617-1}
%Let ${\mathfrak D}$ be a dyadic grid.
Suppose $w \in A_{p(\cdot)}({\mathfrak D})$.
Then
for all $Q \in {\mathfrak D}$
and
a measurable subset $E$ of $Q$,
\[
\frac{|E|}{|Q|}
\le 2[w]_{A_{p(\cdot)}({\mathfrak D})}
\frac{\|\chi_E\|_{L^{p(\cdot)}(w)}}{\|\chi_Q\|_{L^{p(\cdot)}(w)}}.
\]
\end{lemma}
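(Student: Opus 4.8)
The plan is to exploit generalized H\"older's inequality together with the $A_{p(\cdot)}({\mathfrak D})$ condition written in the convenient form \eqref{eq:190624-1}. The starting point is the trivial identity $|E| = \int_E \chi_E(x)\,{\rm d}x = \int_E w(x)^{1/p(x)} w(x)^{-1/p(x)}\,{\rm d}x$, which exhibits $|E|$ as an $L^1$-pairing of $w^{1/p(\cdot)}\chi_E \in L^{p(\cdot)}$ against $w^{-1/p(\cdot)}\chi_E \in L^{p'(\cdot)}$. Applying Lemma \ref{thm:gHolder} gives $|E| \le r_p\,\|\chi_E\|_{L^{p(\cdot)}(w)}\,\|w^{-1/p(\cdot)}\chi_E\|_{L^{p'(\cdot)}}$, and since $\sigma = w^{-1/(p(\cdot)-1)}$ one checks $w^{-1/p(x)} = \sigma(x)^{1/p'(x)}$, so the second factor is exactly $\|\chi_E\|_{L^{p'(\cdot)}(\sigma)}$. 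Thus $|E| \le r_p\,\|\chi_E\|_{L^{p(\cdot)}(w)}\,\|\chi_E\|_{L^{p'(\cdot)}(\sigma)} \le 2\,\|\chi_E\|_{L^{p(\cdot)}(w)}\,\|\chi_E\|_{L^{p'(\cdot)}(\sigma)}$.

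Next I would bound $\|\chi_E\|_{L^{p'(\cdot)}(\sigma)}$ from above by $\|\chi_Q\|_{L^{p'(\cdot)}(\sigma)}$, which is immediate from monotonicity of the norm in the set ($E \subset Q$). This yields
\[
|E| \le 2\,\|\chi_E\|_{L^{p(\cdot)}(w)}\,\|\chi_Q\|_{L^{p'(\cdot)}(\sigma)}.
\]
Now I invoke the $A_{p(\cdot)}({\mathfrak D})$ condition, which says $\|\chi_Q\|_{L^{p'(\cdot)}(\sigma)} \le [w]_{A_{p(\cdot)}({\mathfrak D})}\,|Q|\,\|\chi_Q\|_{L^{p(\cdot)}(w)}^{-1}$ for every $Q \in {\mathfrak D}$. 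Substituting this into the last display gives
\[
|E| \le 2[w]_{A_{p(\cdot)}({\mathfrak D})}\,|Q|\,\frac{\|\chi_E\|_{L^{p(\cdot)}(w)}}{\|\chi_Q\|_{L^{p(\cdot)}(w)}},
\]
and dividing by $|Q|$ is the claimed inequality.

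The steps are all elementary once the correct pairing is set up, so there is no serious obstacle; the only point requiring a moment's care is the algebraic identity $w^{-1/p(x)} = \sigma(x)^{1/p'(x)}$ relating the dual weight in the $L^1$-pairing to the $\sigma$-weighted $L^{p'(\cdot)}$ norm, together with tracking the constant $r_p \le 2$ from Lemma \ref{thm:gHolder} so that the final bound carries the stated factor $2$. One should also note that the $A_{p(\cdot)}({\mathfrak D})$ hypothesis is used with $Q$ ranging over all dyadic cubes in ${\mathfrak D}$ (no size restriction), so the argument applies verbatim to every $Q \in {\mathfrak D}$, which is exactly what the statement asks.
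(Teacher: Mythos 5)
Your proof is correct and follows essentially the same route as the paper: write $|E|$ as the $L^1$-pairing of $w^{1/p(\cdot)}\chi_E$ with $\sigma^{1/p'(\cdot)}\chi_E$, apply the generalized H\"older inequality with constant $r_p\le 2$, enlarge $E$ to $Q$ in the $\sigma$-norm, and finish with the $A_{p(\cdot)}({\mathfrak D})$ condition. The paper's proof is just a condensed version of this same argument, so no further comment is needed.
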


Here for the sake of convenience,
we include the proof.
\begin{proof}
By H\"older's inequality
(see Lemma \ref{thm:gHolder}),
we have
\[
|E|=\int_E dx
\le 2\|\chi_E\|_{L^{p(\cdot)}(w)}\|\chi_Q\|_{L^{p'(\cdot)}(\sigma)}
\le 2[w]_{A_{p(\cdot)}({\mathfrak D})}|Q|
\frac{\|\chi_E\|_{L^{p(\cdot)}(w)}}{\|\chi_Q\|_{L^{p(\cdot)}(w)}},
\]
as required.
\end{proof}

\begin{lemma}\label{lem:190411-13}
%Let ${\mathfrak D}$ be a dyadic grid.
Suppose $w \in A_{p(\cdot)}({\mathfrak D})$.
Then
for all $Q \in {\mathfrak D}$,
%and
%a measurable set $E \subset Q$,
\[
\|\chi_Q\|_{L^{p(\cdot)}(w)}^{p_-(Q)-p_+(Q)}
\lesssim 1.
\]
\end{lemma}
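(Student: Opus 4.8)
The plan is to split into two regimes according to whether $\|\chi_Q\|_{L^{p(\cdot)}(w)}$ is $\le 1$ or $\ge 1$, and in each case compare the quantity $\|\chi_Q\|_{L^{p(\cdot)}(w)}^{p_-(Q)-p_+(Q)}$ against a quantity that is controlled by a finite power of $[w]_{A_{p(\cdot)}({\mathfrak D})}$ together with $|Q|^{\pm(p_+(Q)-p_-(Q))/p_\infty}$-type factors, which are $\sim 1$ only for $|Q|\le 1$. So I would not expect $p_-(Q)-p_+(Q)\ge -C$ directly; instead the exponent $p_-(Q)-p_+(Q)$ is negative, so $\|\chi_Q\|^{p_-(Q)-p_+(Q)}$ is large exactly when $\|\chi_Q\|_{L^{p(\cdot)}(w)}$ is small, and I must rule that out. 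The key structural input is the Muckenhoupt bound in the form derived in \eqref{eq:190624-1} (rewritten for ${\mathfrak D}$), i.e. $|Q|^{-1}\|\chi_Q\|_{L^{p(\cdot)}(w)}\|\chi_Q\|_{L^{p'(\cdot)}(\sigma)}\le[w]_{A_{p(\cdot)}({\mathfrak D})}$, combined with the lower bound $\|\chi_Q\|_{L^{p(\cdot)}(w)}\|\chi_Q\|_{L^{p'(\cdot)}(\sigma)}\gtrsim|Q|$ from generalized H\"older's inequality (Lemma \ref{thm:gHolder}) applied to $\chi_Q$.

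First I would record the two-sided estimate $|Q|\lesssim\|\chi_Q\|_{L^{p(\cdot)}(w)}\|\chi_Q\|_{L^{p'(\cdot)}(\sigma)}\le[w]_{A_{p(\cdot)}({\mathfrak D})}|Q|$, valid for every $Q\in{\mathfrak D}$. Next I would obtain a lower bound on $\|\chi_Q\|_{L^{p(\cdot)}(w)}$ alone. To do this I would bound $\|\chi_Q\|_{L^{p'(\cdot)}(\sigma)}$ from above by a power of $\|\chi_Q\|_{L^{p(\cdot)}(w)}$: this is exactly the content of the Remark following Lemma \ref{lem:190617-2}, which says $w(Q)\le\|\chi_Q\|_{L^{p(\cdot)}(w)}^{p_-(Q)}$ when $\|\chi_Q\|_{L^{p(\cdot)}(w)}\le1$ and $w(Q)\le\|\chi_Q\|_{L^{p(\cdot)}(w)}^{p_+(Q)}$ when $\|\chi_Q\|_{L^{p(\cdot)}(w)}\ge1$, and symmetrically for $\sigma$ with $p'(\cdot)$. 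Combining the Muckenhoupt inequality with these modular/norm comparisons should yield, in the case $\|\chi_Q\|_{L^{p(\cdot)}(w)}\le 1$ (the only dangerous case, since then the negative power is $\ge 1$ and we need an upper bound), an inequality of the shape $\|\chi_Q\|_{L^{p(\cdot)}(w)}^{-(p_+(Q)-p_-(Q))}\lesssim[w]_{A_{p(\cdot)}({\mathfrak D})}^{\,a}$ for some absolute power $a$ depending only on $p_-,p_+$; when $\|\chi_Q\|_{L^{p(\cdot)}(w)}\ge1$ the claimed quantity is already $\le1$ since the exponent is nonpositive, so nothing is to prove there.

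The mechanism I anticipate for the dangerous case is this: from $\|\chi_Q\|_{L^{p(\cdot)}(w)}\|\chi_Q\|_{L^{p'(\cdot)}(\sigma)}\le[w]_{A_{p(\cdot)}({\mathfrak D})}|Q|$ and, say, the reverse bound $\|\chi_Q\|_{L^{p(\cdot)}(w)}\|\chi_Q\|_{L^{p'(\cdot)}(\sigma)}\ge c|Q|$, one gets that $\|\chi_Q\|_{L^{p(\cdot)}(w)}$ and $\|\chi_Q\|_{L^{p'(\cdot)}(\sigma)}$ are, up to the $A_{p(\cdot)}$-constant, reciprocal after dividing by $|Q|$; feeding this into the estimate $w(Q)\,\sigma(Q)^{p(\cdot)-1}$-type relation (the pointwise identity $w\,\sigma^{p(\cdot)-1}=1$ integrated against suitable test functions, or more cleanly the modular bounds from the Remark) pins down $\|\chi_Q\|_{L^{p(\cdot)}(w)}$ from below by a negative power of $[w]_{A_{p(\cdot)}({\mathfrak D})}$. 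Concretely I expect to show $\|\chi_Q\|_{L^{p(\cdot)}(w)}\gtrsim[w]_{A_{p(\cdot)}({\mathfrak D})}^{-\beta}$ for some $\beta=\beta(p_\pm)$, whence $\|\chi_Q\|_{L^{p(\cdot)}(w)}^{p_-(Q)-p_+(Q)}\le\|\chi_Q\|_{L^{p(\cdot)}(w)}^{-(p_+-p_-)}\lesssim[w]_{A_{p(\cdot)}({\mathfrak D})}^{\beta(p_+-p_-)}\lesssim1$.

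The main obstacle I foresee is getting the lower bound on $\|\chi_Q\|_{L^{p(\cdot)}(w)}$ cleanly without circularity: the naive approach bounds $\|\chi_Q\|_{L^{p'(\cdot)}(\sigma)}$ in terms of $\|\chi_Q\|_{L^{p(\cdot)}(w)}$ but the exponent ($p_-(Q)$ vs $p_+(Q)$) to use depends on which side of $1$ \emph{that} norm lies, so one has to track four subcases (each of $\|\chi_Q\|_{L^{p(\cdot)}(w)},\|\chi_Q\|_{L^{p'(\cdot)}(\sigma)}$ being $\gtrless 1$) and check that in every subcase the product bound forces $\|\chi_Q\|_{L^{p(\cdot)}(w)}$ not to be too small. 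Organizing this bookkeeping so that a single absolute power of $[w]_{A_{p(\cdot)}({\mathfrak D})}$ comes out, using $p_-\le p(\cdot)\le p_+$ and $[w]_{A_{p(\cdot)}({\mathfrak D})}\gtrsim1$ to absorb stray constants, is the delicate part; everything else is a direct application of Lemma \ref{thm:gHolder} and the Remark after Lemma \ref{lem:190617-2}.
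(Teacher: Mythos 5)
Your reduction to the case $\|\chi_Q\|_{L^{p(\cdot)}(w)}\le 1$ is fine and matches the paper, but the central mechanism you propose for that case is false: there is no $Q$-independent lower bound of the form $\|\chi_Q\|_{L^{p(\cdot)}(w)}\gtrsim [w]_{A_{p(\cdot)}({\mathfrak D})}^{-\beta}$. Take $w\equiv 1$ and any admissible exponent (even a constant one, for which $[1]_{A_{p(\cdot)}({\mathfrak D})}<\infty$ trivially): for small cubes one has $\|\chi_Q\|_{L^{p(\cdot)}}\sim|Q|^{1/p(z)}\to 0$ as $|Q|\to 0$, so no amount of bookkeeping with the four subcases of $\|\chi_Q\|_{L^{p(\cdot)}(w)},\|\chi_Q\|_{L^{p'(\cdot)}(\sigma)}\gtrless 1$, the modular--norm comparisons, or the two-sided bound $|Q|\lesssim\|\chi_Q\|_{L^{p(\cdot)}(w)}\|\chi_Q\|_{L^{p'(\cdot)}(\sigma)}\le[w]_{A_{p(\cdot)}({\mathfrak D})}|Q|$ can produce such a bound. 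The lemma is not true because the norm stays bounded away from $0$; it is true because whenever the norm is small, the oscillation $p_+(Q)-p_-(Q)$ in the exponent is correspondingly small, and your proposal contains no step that couples the two.

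That coupling is exactly where the log-H\"older hypotheses \eqref{eq:190613-1} and \eqref{eq:190613-2} (a standing assumption in Section \ref{s2}, which you never invoke) must enter, and it is how the paper argues. For $Q\subset[-4,4]^n$ the paper uses the $A_{p(\cdot)}({\mathfrak D})$ condition together with $\|\chi_{[-4,4]^n}\|_{L^{p'(\cdot)}(\sigma)}\lesssim1$ to get the $Q$-dependent lower bound $\|\chi_Q\|_{L^{p(\cdot)}(w)}\gtrsim|Q|$, and then ${\rm LH}_0$ gives $|Q|^{p_-(Q)-p_+(Q)}\lesssim1$. For cubes with $|Q|\le1$ leaving $[-4,4]^n$, it compares $Q$ with a larger dyadic cube $R\supset Q\cup[0,1)^n$, $|R|\sim|x_Q|^n$ (this uses the global, not just local, $A_{p(\cdot)}({\mathfrak D})$ condition and the normalization $\|\chi_{[0,1)^n}\|_{L^{p(\cdot)}(w)}\gtrsim1$), obtaining $\|\chi_Q\|_{L^{p(\cdot)}(w)}\gtrsim|Q|/|R|$, and then ${\rm LH}_\infty$ gives $p_+(Q)-p_-(Q)\lesssim1/\log(e+|x_Q|)$, so that $|R|^{p_+(Q)-p_-(Q)}\lesssim1$. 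Without some such use of \eqref{eq:190613-1} and \eqref{eq:190613-2}, the conclusion cannot follow from the Muckenhoupt condition and H\"older/modular estimates alone (one can arrange exponents with bounded ratio $\|\chi_Q\|_{L^{p(\cdot)}}\|\chi_Q\|_{L^{p'(\cdot)}}/|Q|$ whose oscillation on small cubes is not logarithmically controlled), so the missing ingredient is a genuine gap rather than omitted bookkeeping.
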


\begin{proof}
We may assume $\|\chi_Q\|_{L^{p(\cdot)}(w)} \le 1$;
otherwise the inequality is trivial since $p_-(Q) \le p_+(Q)$.
%For simplicity we may assume ${\mathfrak D}={\mathcal D}$.
\begin{itemize}
\item
Suppose $Q \subset [-4,4]^n$.
Since $p(\cdot)$ satisfies the local log-H\"older continuity condition, 
$|Q|^{p_{-}(Q)-p_{+}(Q)}\lesssim 1$.
Meanwhile, due to Lemma \ref{lem:190617-2}, since $\sigma([-4,4]^n) \lesssim 1$, 
it follows that
$\|\chi_{[-4,4]^n}\|_{L^{p'(\cdot)}(\sigma)} \lesssim 1$.
Thus,
\begin{align*}
|Q|
\lesssim
\|\chi_Q\|_{L^{p(\cdot)}(w)}\|\chi_Q\|_{L^{p'(\cdot)}(\sigma)}
&\lesssim
\|\chi_Q\|_{L^{p(\cdot)}(w)}\|\chi_{[-4,4]^n}\|_{L^{p'(\cdot)}(\sigma)}\\
&\lesssim
\|\chi_Q\|_{L^{p(\cdot)}(w)}.
\end{align*}
\item
Suppose $|Q| \le 1$ and $Q \setminus [-4,4]^n \ne \emptyset$.
We may assume $[0,\infty)^n \cap Q\ne \emptyset$.
Write $x_Q$ for the left-lower corner of $Q$.
Then take a cube $R \in {\mathcal D}$ containing $Q$ and $[0,1)^n$
and satisfying $|R| \sim |x_Q|^n$.
Then
\begin{align*}
|Q| 
\lesssim \|\chi_Q\|_{L^{p(\cdot)}(w)} \|\chi_Q\|_{L^{p'(\cdot)}(\sigma)}
&\lesssim\|\chi_Q\|_{L^{p(\cdot)}(w)} \|\chi_R\|_{L^{p'(\cdot)}(\sigma)}\\
&\lesssim |R|\frac{\|\chi_Q\|_{L^{p(\cdot)}(w)}}{\|\chi_R\|_{L^{p(\cdot)}(w)}}.
\end{align*}
We observe that
\begin{align*}
p_+(Q)-p_-(Q)
=
\sup_{y,z \in Q} |p(y)-p(z)|
\le
2\sup_{y \in Q} |p(y)-p_\infty|
\lesssim
\frac{1}{\log(e+|x_Q|)}.
\end{align*}
Thus, since $|R| \sim |x_Q|^n$,
we have
\[
|R|^{p_+(Q)-p_-(Q)} 
\sim
|x_Q|^\frac{Cn}{\log(e+|x_Q|)}
=
\exp
\left(
\frac{C n\log|x_Q|}{\log(e+|x_Q|)}
\right)
\lesssim 1
\]
for some $C>1$.
Moreover, since
$
\|\chi_R\|_{L^{p(\cdot)}(w)} 
\ge
\|\chi_{[0,1)^n}\|_{L^{p(\cdot)}(w)}
\gtrsim 1,
$
we obtain the desired result.
\end{itemize}
\end{proof}

We fix a cube $P \in {\mathfrak D}$ such that
$0 \in P$ and that $Q$ is any cube adjacent 
to one of the $P$ and the same size, then
$w(Q), \sigma(Q) \gtrsim 1$.

\begin{corollary}\label{cor:190411-1}
Let $w \in A_{p(\cdot)}(\mathfrak{D})$. Then one has
\[
\int_{{\mathbb R}^n}\frac{w(x)dx}{(e+|x|)^K},
\int_{{\mathbb R}^n}\frac{\sigma(x)dx}{(e+|x|)^K}<\infty
\]
as long as $K \gg 1$.
\end{corollary}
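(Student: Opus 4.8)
The plan is to control $w$ and $\sigma$ on dyadic annuli around the origin by a fixed power of $2^m$ and then sum a geometric series; the only substantial point is a geometric feature of the grid ${\mathfrak D}$. For $m=0,1,2,\dots$ let $G_m\in{\mathfrak D}$ be the cube of side length $2^m$ that contains the origin; the $\tfrac13$–translations in the definition of ${\mathcal D}^0_{k,a}$ make this cube well defined, and these cubes are nested, $G_0\subset G_1\subset\cdots$, with $|G_m|=2^{mn}$. Since $w\in A_{p(\cdot)}({\mathfrak D})$, the finiteness of $[w]_{A_{p(\cdot)}({\mathfrak D})}$ forces $\|\chi_Q\|_{L^{p(\cdot)}(w)}<\infty$ and $\|\chi_Q\|_{L^{p'(\cdot)}(\sigma)}<\infty$ for every $Q\in{\mathfrak D}$ (otherwise the product defining $[w]_{A_{p(\cdot)}({\mathfrak D})}$ would be infinite, as $w,\sigma>0$ a.e.); in particular $w(G_0),\sigma(G_0)<\infty$ by \eqref{eq:191116-1}.

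First I would prove a polynomial bound for $w(G_m)$ and $\sigma(G_m)$. Applying Lemma~\ref{lem:190617-1} with $Q=G_m$ and $E=G_0$ and rearranging gives
\[
\|\chi_{G_m}\|_{L^{p(\cdot)}(w)}\le 2[w]_{A_{p(\cdot)}({\mathfrak D})}\,\frac{|G_m|}{|G_0|}\,\|\chi_{G_0}\|_{L^{p(\cdot)}(w)}\lesssim 2^{mn},
\]
so by the Remark following Lemma~\ref{lem:190617-2} one gets $w(G_m)\le\max(1,\|\chi_{G_m}\|_{L^{p(\cdot)}(w)})^{p_+}\lesssim 2^{mnp_+}$. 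Since $\sigma^{-1/(p'(\cdot)-1)}=w$ one has $\sigma\in A_{p'(\cdot)}({\mathfrak D})$ with $[\sigma]_{A_{p'(\cdot)}({\mathfrak D})}=[w]_{A_{p(\cdot)}({\mathfrak D})}$, and the same argument yields $\sigma(G_m)\lesssim 2^{mn(p')_+}$.

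Next I would use the geometry of ${\mathfrak D}$: because of the $\tfrac13$–translations the origin lies at relative distance $\gtrsim\tfrac13$ from $\partial G_m$, and a direct computation with the definition of ${\mathcal D}^0_{k,a}$ shows $\{x\in{\mathbb R}^n:|x|<2^m\}\subset G_{m+3}$ for every $m\ge0$. Now decompose ${\mathbb R}^n=\bigcup_{m\ge0}A_m$ with $A_0=\{|x|<1\}$ and $A_m=\{2^{m-1}\le|x|<2^m\}$ for $m\ge1$. On $A_m$ we have $(e+|x|)^{-K}\lesssim 2^{-mK}$, while $A_m\subset\{|x|<2^m\}\subset G_{m+3}$ gives $w(A_m)\le w(G_{m+3})\lesssim 2^{(m+3)np_+}\lesssim 2^{mnp_+}$; hence
\[
\int_{{\mathbb R}^n}\frac{w(x)\,dx}{(e+|x|)^K}\lesssim\sum_{m\ge0}2^{-mK}\,w(A_m)\lesssim\sum_{m\ge0}2^{-m(K-np_+)}<\infty
\]
as soon as $K>np_+$, and symmetrically $\int_{{\mathbb R}^n}\sigma(x)(e+|x|)^{-K}\,dx<\infty$ as soon as $K>n(p')_+$. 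Taking $K>n\max(p_+,(p')_+)$ completes the proof.

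The step I expect to be the main obstacle is the geometric one: showing that a ball of radius $2^m$ about the origin is contained in a single cube of ${\mathfrak D}$ of comparable side length. This uniform containment is exactly what the translations in the definition of ${\mathcal D}_{\bf a}$ provide, and it is precisely what would fail for the un-shifted dyadic grid (a dyadic hyperplane may pass arbitrarily close to, or through, the origin) — which is why the fixed cube $P$ is singled out immediately before the statement. Once this is in place, everything else is routine bookkeeping with the norm/measure comparisons collected in Section~\ref{s2}.
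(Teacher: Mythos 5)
Your argument is correct and is essentially the paper's own proof: the paper fixes the cube $P\ni 0$ of the shifted grid, takes its dyadic ancestors $P_j$, obtains $w(P_j)\lesssim 2^{jnp_+}$ from Lemma \ref{lem:190617-1} combined with the norm--modular comparisons of Lemma \ref{lem:190410-1}, and sums $\sum_j 2^{-j(K-np_+)}$ over the shells $P_{j+1}\setminus P_j$. Your nested cubes $G_m$ with Euclidean annuli, and the explicit duality remark yielding the $\sigma$-estimate, are only cosmetic variants of that same scheme (the paper disposes of $\sigma$ with ``proved similarly'').
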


\begin{proof}
Let $P_j$ be the $j$-th parent of $P$.
Then we have
\begin{align*}
\int_{{\mathbb R}^n}\frac{w(x)dx}{(e+|x|)^K}
=
\int_{P_1}\frac{w(x)dx}{(e+|x|)^K}
+
\sum_{j=1}^\infty
\int_{P_{j+1} \setminus P_j}\frac{w(x)dx}{(e+|x|)^K}
\lesssim
\sum_{j=1}^\infty 2^{-j K}w(P_j).
\end{align*}
By Lemmas \ref{lem:190410-1} and \ref{lem:190617-1}, 
\begin{align*}
w(P_j) 
\lesssim
\max\left(
\|\chi_{P_j}\|_{L^{p(\cdot)}(w)}^{p_-}, 
\|\chi_{P_j}\|_{L^{p(\cdot)}(w)}^{p_+}\right)
\lesssim
\max\left(
\left(\frac{|P_j|}{|P|}\right)^{p_-}, 
\left(\frac{|P_j|}{|P|}\right)^{p_+}\right)
\lesssim 1.
\end{align*}
Thus, since $K$ is sufficiently large, it follows that
\begin{align*}
\int_{{\mathbb R}^n}\frac{w(x)dx}{(e+|x|)^K}
\lesssim
\sum_{j=1}^\infty 2^{-j(K-np_+)}
\sim 1.
\end{align*}
The second inequality is proved similarly.
\end{proof}

\begin{lemma}\label{lem:190411-2}
%Let ${\mathfrak D}$ be a dyadic grid.
Suppose $w \in A_{p(\cdot)}({\mathfrak D})$.
Let $Q \in {\mathfrak D}$
and $E$ be
a measurable subset of $Q$.
\begin{enumerate}
\item[$(1)$]
If $w(Q) \ge 1$,
then
$\|\chi_Q\|_{L^{p(\cdot)}(w)} \sim w(Q)^{\frac{1}{p_\infty}}$.
\item[$(2)$]
If $w(E) \ge 1$,
then
$\displaystyle
\frac{|E|}{|Q|}\lesssim
\left(\frac{w(E)}{w(Q)}\right)^{\frac1{p_\infty}}.
$
\item[$(3)$]
In general
$\displaystyle
\frac{|E|}{|Q|}\lesssim
\left(\frac{w(E)}{w(Q)}\right)^{\frac1{p_+}}.
$
\end{enumerate}
\end{lemma}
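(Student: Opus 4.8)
The plan is to prove (1), (2), (3) in order, the tools being the interpolation estimate with remainder (Lemma~\ref{lem:190411-11}), the decay bound of Corollary~\ref{cor:190411-1}, the modular/norm comparisons in the Remark following Lemma~\ref{lem:190617-2} (these hold verbatim for an arbitrary measurable set in place of a cube, on applying Lemmas~\ref{lem:190410-1} and \ref{lem:190617-2} to $f=w^{1/p(\cdot)}\chi_E$), Lemma~\ref{lem:190617-1} for (2)--(3), and Lemma~\ref{lem:190411-13} for (3). \emph{Part (1).} Since $w(Q)\ge1$, \eqref{eq:191116-1} gives $\|\chi_Q\|_{L^{p(\cdot)}(w)}\ge1$. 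For the upper bound I would put $\lambda=C\,w(Q)^{1/p_\infty}\,(\ge C\ge1)$ and apply Lemma~\ref{lem:190411-11} on $G=Q$ with $d\mu=w\,dx$, $s(\cdot)=p(\cdot)$, $r(\cdot)\equiv p_\infty$, $f\equiv1/\lambda\le1$: the principal term equals $\lambda^{-p_\infty}w(Q)=C^{-p_\infty}$, and the remainder is dominated by $\int_{{\mathbb R}^n}(e+|x|)^{-tnp_\infty}w(x)\,dx$, which by Corollary~\ref{cor:190411-1} together with dominated convergence is as small as we wish once $t$ is large. Fixing $t$ large and then $C$ large makes $\int_Q\lambda^{-p(x)}w\,dx\le1$, i.e.\ $\|\chi_Q\|_{L^{p(\cdot)}(w)}\le\lambda$. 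The lower bound is symmetric: with $\lambda=c\,w(Q)^{1/p_\infty}$ and Lemma~\ref{lem:190411-11} applied with $s(\cdot)\equiv p_\infty$, $r(\cdot)=p(\cdot)$, one gets $\int_Q\lambda^{-p(x)}w\,dx>1$, hence $\|\chi_Q\|_{L^{p(\cdot)}(w)}>\lambda$, for $c$ small provided $\lambda\ge1$; if instead $w(Q)$ is so small that $\lambda<1$, then $\|\chi_Q\|_{L^{p(\cdot)}(w)}\ge1>\lambda$ outright. Since cubes were never used, (1) in fact holds for every measurable $E$ with $w(E)\ge1$.

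\emph{Part (2) and the easy cases of (3).} If $w(E)\ge1$ then $w(Q)\ge w(E)\ge1$, and (1) applied to $E$ and to $Q$ gives $\|\chi_E\|_{L^{p(\cdot)}(w)}\sim w(E)^{1/p_\infty}$, $\|\chi_Q\|_{L^{p(\cdot)}(w)}\sim w(Q)^{1/p_\infty}$; with Lemma~\ref{lem:190617-1},
\[
\frac{|E|}{|Q|}\lesssim\frac{\|\chi_E\|_{L^{p(\cdot)}(w)}}{\|\chi_Q\|_{L^{p(\cdot)}(w)}}\sim\left(\frac{w(E)}{w(Q)}\right)^{1/p_\infty},
\]
which is (2); and since $w(E)/w(Q)\le1$ and $p_\infty\le p_+$, the last quantity is $\le(w(E)/w(Q))^{1/p_+}$, giving (3) here. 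If $w(E)<1\le w(Q)$, then by \eqref{eq:191116-1} and the Remark $\|\chi_E\|_{L^{p(\cdot)}(w)}\le w(E)^{1/p_+(E)}\le w(E)^{1/p_+}$, while (1) gives $\|\chi_Q\|_{L^{p(\cdot)}(w)}\sim w(Q)^{1/p_\infty}\ge w(Q)^{1/p_+}$, and Lemma~\ref{lem:190617-1} finishes it.

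\emph{Part (3), the case $w(Q)<1$ (hence $w(E)<1$).} By Lemma~\ref{lem:190617-1} it suffices to bound $\|\chi_E\|_{L^{p(\cdot)}(w)}/\|\chi_Q\|_{L^{p(\cdot)}(w)}$. Write $\|\chi_E\|_{L^{p(\cdot)}(w)}=w(E)^{1/\gamma_E}$, $\|\chi_Q\|_{L^{p(\cdot)}(w)}=w(Q)^{1/\gamma_Q}$. Since both norms are $\le1$, the Remark forces $\gamma_E\in[p_-(E),p_+(E)]$ and $\gamma_Q\in[p_-(Q),p_+(Q)]$, and because $E\subseteq Q$ both lie in $[p_-(Q),p_+(Q)]$; hence $1/\gamma_E\ge1/p_+$ and $|1/\gamma_E-1/\gamma_Q|\le(p_+(Q)-p_-(Q))/p_-^2$. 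Therefore
\[
\frac{\|\chi_E\|_{L^{p(\cdot)}(w)}}{\|\chi_Q\|_{L^{p(\cdot)}(w)}}=\left(\frac{w(E)}{w(Q)}\right)^{1/\gamma_E}w(Q)^{\,1/\gamma_E-1/\gamma_Q}\le\left(\frac{w(E)}{w(Q)}\right)^{1/p_+}w(Q)^{-(p_+(Q)-p_-(Q))/p_-^2},
\]
and the last factor is $O(1)$ because Lemma~\ref{lem:190411-13}, read through $\|\chi_Q\|_{L^{p(\cdot)}(w)}=w(Q)^{1/\gamma_Q}<1$, says precisely $(p_+(Q)-p_-(Q))\,|\log w(Q)|\lesssim1$.

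\emph{Main obstacle.} The delicate point is this last case: the temptation is to bound the numerator and denominator of $\|\chi_E\|_{L^{p(\cdot)}(w)}/\|\chi_Q\|_{L^{p(\cdot)}(w)}$ separately, but $\|\chi_Q\|_{L^{p(\cdot)}(w)}$ is only comparable to $w(Q)^{1/\gamma_Q}$ with $\gamma_Q$ possibly far below $p_+$, so ``$\|\chi_Q\|_{L^{p(\cdot)}(w)}\gtrsim w(Q)^{1/p_+}$'' fails when $w(Q)$ is small. What makes it work is that $E\subseteq Q$ pins $\gamma_E$ and $\gamma_Q$ into the common interval $[p_-(Q),p_+(Q)]$, reducing the error exponent to something controlled by $p_+(Q)-p_-(Q)$, which Lemma~\ref{lem:190411-13} converts into the oscillation bound $(p_+(Q)-p_-(Q))|\log w(Q)|\lesssim1$. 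A small but essential matter in (1) is the order of quantifiers: one must first enlarge $t$ (depending on $[w]_{A_{p(\cdot)}({\mathfrak D})}$ via Corollary~\ref{cor:190411-1}) to crush the remainder term, and only afterwards fix the comparison constant in $\lambda$.
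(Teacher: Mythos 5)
Your proposal is correct and follows essentially the same route as the paper: part (1) by applying Lemma \ref{lem:190411-11} in both directions together with Corollary \ref{cor:190411-1}, part (2) by combining (1) (which, as you note, works for arbitrary measurable sets) with Lemma \ref{lem:190617-1}, and part (3) by the same three-case split, with Lemma \ref{lem:190411-13} absorbing the exponent-oscillation factor when $w(Q)<1$ — your $\gamma_E,\gamma_Q$ bookkeeping is just a logarithmic rewriting of the paper's use of $\|\chi_Q\|_{L^{p(\cdot)}(w)}^{p_-(Q)/p_+(Q)-1}\lesssim 1$. The only cosmetic difference is in (1): instead of tuning $t$ to shrink the remainder (whose admissibility would need the constant in Lemma \ref{lem:190411-11} to be uniform in $t$), the paper simply accepts a modular bound $\lesssim 1$ and rescales, which suffices because $p_+<\infty$.
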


%\begin{proof}
%We refer to \cite[Lemma 3.4]{CFN12} for the proof.
%\end{proof}

%%%%%%%%%%%%%%%%%%%%%%%%%%%%%%%%%%%%%%%%%%%%%%%%%%%%%%%%%%%%%%%
%\begin{comment}

\begin{proof}\
\begin{enumerate}
\item[$(1)$]
By Lemma \ref{lem:190411-11},
\begin{align*}
\int_Q \left(\frac{1}{w(Q)^{\frac{1}{p_\infty}}}\right)^{p(x)}w(x)dx
\lesssim
\int_Q \frac{w(x)dx}{w(Q)}
+
\int_Q \frac{w(x)dx}{(e+|x|)^{K}}
\lesssim 1.
\end{align*}
Thus,
$
w(Q) \gtrsim(\|\chi_Q\|_{L^{p(\cdot)}(w)})^{p_\infty}.
$
Likewise, due to Lemma \ref{lem:190411-11}
\begin{align*}
\int_Q \left(\frac{1}{\|\chi_Q\|_{L^{p(\cdot)}(w)}}\right)^{p_\infty}w(x)dx
\lesssim
\int_Q \frac{w(x)dx}{\|\chi_Q\|_{L^{p(\cdot)}(w)}{}^{p(x)}}
+
\int_Q \frac{w(x)dx}{(e+|x|)^{K}}
\lesssim 1.
\end{align*}
Thus,
$
w(Q) \lesssim(\|\chi_Q\|_{L^{p(\cdot)}(w)})^{p_\infty}.
$
\item[$(2)$] 
By Lemma \ref{lem:190411-11}, 
\begin{align*}
%\int_E \left(\frac{1}{\|\chi_E\|_{L^{p(\cdot)}(w)}}\right)^{p_\infty}w(x)dx
%\le C
%\int_E \frac{w(x)dx}{\|\chi_E\|_{L^{p(\cdot)}(w)}{}^{p(x)}}
%+
%\int_E \frac{w(x)dx}{(e+|x|)^{K}}
%\le C.
\int_E \left(\frac{1}{w(E)^{\frac{1}{p_\infty}}}\right)^{p(x)}w(x)dx
\lesssim
\int_E \frac{w(x)dx}{w(E)}
+
\int_E \frac{w(x)dx}{(e+|x|)^{K}}
\lesssim1.
\end{align*}
Thus,
since $w(Q)^{\frac{1}{p_\infty}} \sim \|\chi_Q\|_{L^{p(\cdot)}(w)}$,
\begin{align*}
\frac{|E|}{|Q|}
\lesssim
\frac{\|\chi_E\|_{L^{p(\cdot)}(w)}}{\|\chi_Q\|_{L^{p(\cdot)}(w)}}
\lesssim
\frac{w(E)^{\frac{1}{p_\infty}}}{w(Q)^{\frac{1}{p_\infty}}},
\end{align*}
as required.
\item[$(3)$]
If $w(E) \ge 1$,
then this is clear from (2).
Suppose $w(E) \le 1$.
\begin{itemize}
\item
If $w(Q) \le 1$,
then by virtue of Lemma \ref{lem:190617-2},
$\|\chi_Q\|_{L^{p(\cdot)}(w)} \le 1$
and
$\|\chi_E\|_{L^{p(\cdot)}(w)} \le 1$
and hence
\begin{equation} \label{eq:190617-3}
(\|\chi_Q\|_{L^{p(\cdot)}(w)})^{p_-(Q)} \ge w(Q), \quad
(\|\chi_E\|_{L^{p(\cdot)}(w)})^{p_+(Q)} \le w(E)
\end{equation}
thanks to Lemma \ref{lem:190410-1}.
Meanwhile,
$
\|\chi_Q\|_{L^{p(\cdot)}(w)}{}^{\frac{p_-(Q)}{p_+(Q)}-1} \lesssim 1
$
due to Lemma \ref{lem:190411-13}.
Thus, using (\ref{eq:190617-3}), we have
\begin{align*}
\frac{|E|}{|Q|}
\lesssim
\frac{\|\chi_E\|_{L^{p(\cdot)}(w)}}{\|\chi_Q\|_{L^{p(\cdot)}(w)}}
\lesssim
\frac{w(E)^{\frac{1}{p_+(Q)}}}{w(Q)^{\frac{1}{p_+(Q)}}\|\chi_Q\|_{L^{p(\cdot)}(w)}{}^{1-\frac{p_-(Q)}{p_+(Q)}}}
\lesssim
\frac{w(E)^{\frac{1}{p_+}}}{w(Q)^{\frac{1}{p_+}}}.
\end{align*}
\item
If $w(E) \le 1 \le w(Q)$,
then
\[
(\|\chi_Q\|_{L^{p(\cdot)}(w)})^{p_+(Q)} \ge w(Q), \quad
(\|\chi_E\|_{L^{p(\cdot)}(w)})^{p_+(Q)} \le w(E)
\]
thanks to Lemma \ref{lem:190411-13}.
Thus,
\begin{align*}
\frac{|E|}{|Q|}
\lesssim
\frac{\|\chi_E\|_{L^{p(\cdot)}(w)}}{\|\chi_Q\|_{L^{p(\cdot)}(w)}}
\lesssim
\frac{w(E)^{\frac{1}{p_+(Q)}}}{w(Q)^{\frac{1}{p_+(Q)}}}
\lesssim
\frac{w(E)^{\frac{1}{p_+}}}{w(Q)^{\frac{1}{p_+}}}.
\end{align*}
\end{itemize}
\end{enumerate}
\end{proof}

%\end{comment}

\begin{lemma}[Sparse decomposition of $f$] {\rm \cite[p.250]{Tanaka10}}
\label{lem:191117-1}
Let $f \in L^\infty_{\rm c}({\mathbb R}^n) \setminus \{0\}$,
$\lambda>0$,
$a \gg 2^n$
and let ${\mathfrak D}$ be a dyadic grid.
Then
there exists a set of pairwise
disjoint dyadic cubes $\{Q_j^k\}_{k\in \mathbb{Z}, j \in J_k}$
such that
\[
\Omega_k \equiv
\{x \in {\mathbb R}^n\,:\,M_{{\mathfrak D}}f(x)>a^k\}
=
\bigcup_{j \in J_k}Q_j^k.
\]
Further, these cubes have the property that
$a^{k}<m_{Q_j^k}(|f|)\le 2^n a^{k}$ 
for all $j \in J_k$.
Furthermore,
there exists a disjoint collection
$\{E_j^k\}_{k \in {\mathbb Z}, j \in J_k}$,
where each $E_j^k$ is a subset of $Q_j^k$
called nutshell,
such that
$2|E_j^k| \ge |Q_j^k|$
and that
\[
\Omega_k \setminus \Omega_{k+1}=\bigcup_{j \in J_k}E_j^k.
\]
\end{lemma}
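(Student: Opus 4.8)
Here is how I would prove the sparse decomposition lemma. The plan is to run the classical dyadic Calderón--Zygmund stopping-time argument for $|f|$ at the heights $a^k$, adapted to the grid $\mathfrak D$. First I would define $\{Q_j^k\}_{j\in J_k}$ to be the family of \emph{maximal} cubes $Q\in\mathfrak D$ for which $m_Q(|f|)>a^k$. Since $f\in L^\infty_{\rm c}({\mathbb R}^n)$, one has $m_Q(|f|)\le|Q|^{-1}\|f\|_{L^1}\to0$ as the cube $Q$ grows, so each $x\in\Omega_k$ lies in a \emph{largest} dyadic cube on which the mean of $|f|$ exceeds $a^k$; this makes $\{Q_j^k\}_j$ well defined and pairwise disjoint (distinct dyadic cubes are nested or disjoint, and maximality excludes nesting), and it visibly satisfies $\bigcup_{j\in J_k}Q_j^k=\Omega_k$, because $x\in\Omega_k$ exactly when some dyadic cube $Q\ni x$ has $m_Q(|f|)>a^k$. (The parameter $\lambda$ in the statement plays no role in the construction.)

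Next I would verify the two-sided bound $a^k<m_{Q_j^k}(|f|)\le2^na^k$. The lower bound is the defining property of $Q_j^k$. For the upper bound I pass to the dyadic parent $\widehat{Q_j^k}$, which by maximality of $Q_j^k$ satisfies $m_{\widehat{Q_j^k}}(|f|)\le a^k$; since $\widehat{Q_j^k}\supset Q_j^k$ and $|\widehat{Q_j^k}|=2^n|Q_j^k|$,
\[
m_{Q_j^k}(|f|)\le\frac{2^n}{|\widehat{Q_j^k}|}\int_{\widehat{Q_j^k}}|f(y)|\,{\rm d}y=2^nm_{\widehat{Q_j^k}}(|f|)\le2^na^k.
\]

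For the nutshells I would set $E_j^k\equiv Q_j^k\setminus\Omega_{k+1}$. Since $a^{k+1}>a^k$ we have $\Omega_{k+1}\subset\Omega_k$, and $\Omega_{k+1}\cap Q_j^k$ is exactly the union of those cubes $Q_i^{k+1}$ that are contained in $Q_j^k$ (each such dyadic cube is either inside $Q_j^k$ or disjoint from it). Disjointness of the whole collection $\{E_j^k\}_{k,j}$ is then immediate: for fixed $k$ the sets $E_j^k\subset Q_j^k$ are disjoint, while for $k<k'$ one has $E_i^{k'}\subset Q_i^{k'}\subset\Omega_{k'}\subset\Omega_{k+1}$ whereas $E_j^k\cap\Omega_{k+1}=\emptyset$. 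Moreover
\[
\bigcup_{j\in J_k}E_j^k=\Big(\bigcup_{j\in J_k}Q_j^k\Big)\setminus\Omega_{k+1}=\Omega_k\setminus\Omega_{k+1}.
\]

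Finally I would establish the packing estimate $2|E_j^k|\ge|Q_j^k|$, which is equivalent to $|Q_j^k\cap\Omega_{k+1}|\le\tfrac12|Q_j^k|$. Writing $Q_j^k\cap\Omega_{k+1}$ as the disjoint union of the $Q_i^{k+1}\subset Q_j^k$ and using $m_{Q_i^{k+1}}(|f|)>a^{k+1}$ together with the upper bound $m_{Q_j^k}(|f|)\le2^na^k$ already obtained,
\[
|Q_j^k\cap\Omega_{k+1}|=\sum_i|Q_i^{k+1}|<a^{-(k+1)}\sum_i\int_{Q_i^{k+1}}|f(y)|\,{\rm d}y\le a^{-(k+1)}\int_{Q_j^k}|f(y)|\,{\rm d}y\le\frac{2^n}{a}|Q_j^k|.
\]
Since $a\gg2^n$ (it suffices that $a\ge2^{n+1}$), the right-hand side is at most $\tfrac12|Q_j^k|$, which is the claim. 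I do not expect a genuine obstacle: the argument is the standard one. The only point requiring any care is this last chain of inequalities, since it is precisely what dictates the hypothesis $a\gg2^n$; and one must not forget that the compact support of $f$ is what guarantees that the maximal stopping cubes exist at all.
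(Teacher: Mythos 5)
Your proof is correct: the paper does not prove this lemma but simply cites Tanaka, and the argument there is the same standard dyadic Calder\'on--Zygmund stopping-time construction you give (maximal cubes at heights $a^k$, parents for the upper bound $2^n a^k$, nutshells $E_j^k = Q_j^k \setminus \Omega_{k+1}$, and the packing bound forcing $a \ge 2^{n+1}$). Your treatment matches what is needed, including the observation that the compact support of $f$ guarantees the existence of maximal cubes and that $\lambda$ is inert in the statement.
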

A direct consequence of Lemma \ref{lem:191117-1} is that
\[
M_{{\mathfrak D}}f
\lesssim
\sum_{k=-\infty}^{\infty}\sum_{j \in J_k}m_{Q_j^k}(|f|)\chi_{E_j^k}.
\]

Given a weight $W$ and a measurable function
$f$,
define
\[
M_{W,{\mathfrak D}}f(x)=
\sup_{Q \in {\mathfrak D}}\frac{\chi_Q(x)}{W(Q)}\int_Q |f(y)|W(y)dy.
\]
The next lemma reflects
the geometic property of
${\mathfrak D}$.
\begin{lemma} {\rm \cite[Lemma 2.1]{HyPe13}} \label{lem:190724-2} 
Given a weight $W$ and $1<p<\infty$,
we have
\[
\int_{{\mathbb R}^n}M_{W,{\mathfrak D}}f(x)^pW(x)dx
\le
\frac{p \cdot 2^p}{p-1}
\int_{{\mathbb R}^n}|f(x)|^pW(x)dx.
\]
\end{lemma}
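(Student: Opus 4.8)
The plan is to derive the $L^p(W)$ bound from a weak-type $(1,1)$ inequality for $M_{W,{\mathfrak D}}$ with \emph{constant one}, together with the trivial estimate $\|M_{W,{\mathfrak D}}f\|_{L^\infty(W)}\le\|f\|_{L^\infty(W)}$, via the classical Marcinkiewicz slicing argument; the factor $\frac{p\cdot2^p}{p-1}$ in the statement is exactly what that computation produces. Since $M_{W,{\mathfrak D}}f=M_{W,{\mathfrak D}}|f|$, we may take $f\ge0$, and, using that $M_{W,{\mathfrak D}}g_N\uparrow M_{W,{\mathfrak D}}f$ whenever $0\le g_N\uparrow f$, monotone convergence lets us assume $f\in L^\infty_{\rm c}({\mathbb R}^n)$; in particular $\int_{{\mathbb R}^n}fW<\infty$. (We tacitly assume $W$ is locally integrable, so that $M_{W,{\mathfrak D}}$ is well defined.)

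The first task is to show that $W(\{M_{W,{\mathfrak D}}f>\lambda\})\le\lambda^{-1}\int_{{\mathbb R}^n}fW$ for every $\lambda>0$, which I would prove by a Calder\'on--Zygmund stopping-time selection carried out inside the grid ${\mathfrak D}$ with respect to the measure $W\,dx$ rather than Lebesgue measure. Fix $\lambda>0$ and put $\Omega_\lambda\equiv\{M_{W,{\mathfrak D}}f>\lambda\}$. As ${\mathfrak D}$ is a nested grid partitioning ${\mathbb R}^n$ at every scale, $\Omega_\lambda$ is the union of the cubes $Q\in{\mathfrak D}$ with $m^W_Q(f)\equiv W(Q)^{-1}\int_QfW>\lambda$, each of which lies in a maximal such cube; let $\{Q_j\}_j$ be these maximal cubes, which are then pairwise disjoint and satisfy $\Omega_\lambda=\bigcup_jQ_j$. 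Hence
\[
W(\Omega_\lambda)=\sum_jW(Q_j)\le\frac1\lambda\sum_j\int_{Q_j}fW\le\frac1\lambda\int_{{\mathbb R}^n}fW,
\]
the last inequality by disjointness. The one point needing care is the existence of the maximal cubes: if $W({\mathbb R}^n)=\infty$ the $W$-averages of $f$ tend to $0$ along any increasing chain of dyadic cubes, while if $W({\mathbb R}^n)<\infty$ they tend to $W({\mathbb R}^n)^{-1}\int_{{\mathbb R}^n}fW$, so maximal cubes exist whenever $\lambda$ exceeds this value, and for smaller $\lambda$ the claim holds because $W(\Omega_\lambda)\le W({\mathbb R}^n)\le\lambda^{-1}\int_{{\mathbb R}^n}fW$. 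This is the only place the geometry of ${\mathfrak D}$ is used, and it is the step I expect to demand the most attention.

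Finally, to upgrade to $L^p(W)$, I would combine the layer-cake formula $\int_{{\mathbb R}^n}(M_{W,{\mathfrak D}}f)^pW=p\int_0^\infty\lambda^{p-1}W(\Omega_\lambda)\,d\lambda$ with the truncation $f=f\chi_{\{f>\lambda/2\}}+f\chi_{\{f\le\lambda/2\}}$. Since $M_{W,{\mathfrak D}}$ is sublinear and $M_{W,{\mathfrak D}}(f\chi_{\{f\le\lambda/2\}})\le\lambda/2$ pointwise, $\Omega_\lambda\subset\{M_{W,{\mathfrak D}}(f\chi_{\{f>\lambda/2\}})>\lambda/2\}$, so the weak-$(1,1)$ bound gives $W(\Omega_\lambda)\le\frac2\lambda\int_{\{f>\lambda/2\}}fW$. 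Inserting this, interchanging the order of integration by Tonelli, and evaluating $\int_0^{2f(x)}\lambda^{p-2}\,d\lambda=\frac{(2f(x))^{p-1}}{p-1}$ yields
\[
\int_{{\mathbb R}^n}(M_{W,{\mathfrak D}}f)^pW\le2p\int_{{\mathbb R}^n}f(x)W(x)\,\frac{(2f(x))^{p-1}}{p-1}\,dx=\frac{p\cdot2^p}{p-1}\int_{{\mathbb R}^n}f(x)^pW(x)\,dx,
\]
and undoing the monotone-convergence reduction completes the proof.
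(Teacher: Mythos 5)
Your argument is correct, and it is worth noting that the paper itself does not prove this lemma at all: it is quoted verbatim from Hyt\"onen--P\'erez \cite[Lemma 2.1]{HyPe13}, so there is no internal proof to compare against. What you wrote is essentially the standard proof of that cited result: the Calder\'on--Zygmund stopping-time selection with respect to the measure $W\,dx$ gives the weak $(1,1)$ bound with constant $1$, and the truncation $f\chi_{\{f>\lambda/2\}}$ combined with the layer-cake formula produces exactly the constant $\frac{p\cdot 2^p}{p-1}$, as your computation $2p\cdot\frac{2^{p-1}}{p-1}=\frac{p\cdot2^p}{p-1}$ confirms. The one delicate point, the existence of maximal stopping cubes, you identified correctly, and your case analysis on $W(\mathbb{R}^n)$ is sound for the grid ${\mathfrak D}$ used in the paper, because every increasing chain in the $1/3$-shifted grid ${\mathcal D}_{(1,\ldots,1)}$ exhausts all of ${\mathbb R}^n$ (this is precisely the ``geometric property of ${\mathfrak D}$'' the authors allude to just before the lemma); for an unshifted dyadic grid you would instead run the same limit argument quadrant by quadrant, or, alternatively, restrict to cubes of generation at most $N$ and let $N\to\infty$, which avoids the case distinction altogether. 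Two tacit hypotheses you flagged (local integrability of $W$, and $W(Q)>0$ or the convention that cubes with $W(Q)=0$ contribute nothing to the supremum) are indeed needed to make $M_{W,{\mathfrak D}}$ well defined, and they are harmless in the paper's application where $W\in A_\infty({\mathfrak D})$.
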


We transform Lemma \ref{lem:190724-2} as follows:
\begin{lemma}\label{lem:190619-1}
Let $\{Q_j^k\}_{k \in {\mathbb Z}, j \in J_k}$ be a sparse collection
with the nutshell $\{E_j^k\}_{k \in {\mathbb Z}, j \in J_k}$,
and let $1<r<\infty$.
Let also $W \in A_{\infty}({\mathfrak D})$.
Then for all non-negative $f \in L^0({\mathbb R}^n)$,
\[
\sum_{k=-\infty}^\infty\sum_{j \in J_k}
\left(
\frac{1}{W(Q_j^k)}\int_{Q_j^k}f(y)W(y)dy\right)^r
W(Q_j^k)
\lesssim
\int_{{\mathbb R}^n}f(x)^{r}W(x)dx.
\]
\end{lemma}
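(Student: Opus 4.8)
The plan is to deduce Lemma \ref{lem:190619-1} from Lemma \ref{lem:190724-2} by passing through the weighted maximal operator $M_{W,{\mathfrak D}}$. First I would use the defining property of the nutshells, namely $2|E_j^k| \ge |Q_j^k|$ together with $W \in A_\infty({\mathfrak D})$, to compare $W(Q_j^k)$ and $W(E_j^k)$. Indeed, by Lemma \ref{lem:190618-1} applied with the cube $Q_j^k$ and the subset $E_j^k$ (which occupies at least half of $Q_j^k$), there are constants $C_1, C_2 \in (0,1)$ depending only on $[W]_{A_\infty}$ such that $W(E_j^k) \ge C_2 W(Q_j^k)$; choosing $C_1 = 1/2$ is legitimate because $|E_j^k|/|Q_j^k| \ge 1/2$. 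Hence $W(Q_j^k) \lesssim W(E_j^k)$ with implied constant independent of $j,k$.

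Next I would observe that for each fixed $k,j$ and each $x \in E_j^k \subset Q_j^k$, the average $\frac{1}{W(Q_j^k)}\int_{Q_j^k} f(y) W(y)\,dy$ is bounded by $M_{W,{\mathfrak D}}f(x)$, directly from the definition of $M_{W,{\mathfrak D}}$ since $Q_j^k \in {\mathfrak D}$ contains $x$. Combining this with the previous step, for each $k,j$,
\begin{align*}
\left(\frac{1}{W(Q_j^k)}\int_{Q_j^k} f(y)W(y)\,dy\right)^r W(Q_j^k)
&\lesssim
\left(\frac{1}{W(Q_j^k)}\int_{Q_j^k} f(y)W(y)\,dy\right)^r W(E_j^k)\\
&\le
\int_{E_j^k} \left(M_{W,{\mathfrak D}}f(x)\right)^r W(x)\,dx.
\end{align*}
Then I would sum over $k \in {\mathbb Z}$ and $j \in J_k$. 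Since the collection $\{E_j^k\}$ is pairwise disjoint, the sum of the right-hand sides is at most $\int_{{\mathbb R}^n} (M_{W,{\mathfrak D}}f(x))^r W(x)\,dx$.

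Finally I would invoke Lemma \ref{lem:190724-2} with $p = r$ to bound $\int_{{\mathbb R}^n}(M_{W,{\mathfrak D}}f(x))^r W(x)\,dx \le \frac{r \cdot 2^r}{r-1}\int_{{\mathbb R}^n} f(x)^r W(x)\,dx$, which yields the claim with implied constant depending only on $r$ and $[W]_{A_\infty}$. I do not anticipate a serious obstacle here; the one point requiring a little care is the justification that one may take $C_1 = 1/2$ in Lemma \ref{lem:190618-1} when invoking the $A_\infty$ characterization — this is fine because the statement allows $|E| > C_1|Q|$ for any admissible $C_1 < 1$, and the condition $2|E_j^k| \ge |Q_j^k|$ gives a fixed proportion; if one wants a strict inequality one can instead work with a slightly enlarged subset or note that the borderline case contributes nothing. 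A secondary bookkeeping point is that $f$ should be assumed to make the right-hand side finite (otherwise there is nothing to prove), and the monotone convergence theorem handles the passage from $L^\infty_{\mathrm c}$ truncations to general non-negative $f \in L^0$ if desired.
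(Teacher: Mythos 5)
Your argument is correct and is essentially the paper's own proof: the paper likewise uses Lemma \ref{lem:190618-1} to get $W(Q_j^k)\lesssim W(E_j^k)$, bounds the weighted averages pointwise by $M_{W,{\mathfrak D}}f$ on the disjoint nutshells $E_j^k$, and concludes with Lemma \ref{lem:190724-2} applied with exponent $r$. No gaps to report.
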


\begin{proof}
By Lemmas \ref{lem:190618-1} and \ref{lem:190724-2}, 
\begin{align*}
\sum_{k=-\infty}^\infty\sum_{j \in J_k}
&\left(
\frac{1}{W(Q_j^k)}\int_{Q_j^k}f(y)W(y)dy\right)^r
W(Q_j^k)\\
&\lesssim
\sum_{k=-\infty}^\infty\sum_{j \in J_k}
\left(
\frac{1}{W(Q_j^k)}\int_{Q_j^k}f(y)W(y)dy\right)^r
W(E_j^k)\\
&=
\sum_{k=-\infty}^\infty\sum_{j \in J_k}
\int_{E_j^k}
\left(
\frac{1}{W(Q_j^k)}\int_{Q_j^k}f(y)W(y)dy\right)^r
W(x)dx\\
&\le
\sum_{k=-\infty}^\infty\sum_{j \in J_k}
\int_{E_j^k}
M_{W,{\mathcal D}}f(x)^r
W(x)dx\\
&\lesssim
%\frac{r \cdot 2^r}{r-1}
\int_{{\mathbb R}^n}f(x)^r W(x)dx,
\end{align*}
as required.
\end{proof}

%Here and below write
%$\sigma\equiv w^{-\frac{1}{p(\cdot)-1}}$.

\begin{lemma}\label{lem:190411-1}
Let $w \in A_{p(\cdot)}({\mathfrak D})$ and $Q \in {\mathfrak D}$. 
\begin{enumerate}
\item[$(1)$]
$\|\chi_Q\|_{L^{p'(\cdot)}(\sigma)}{}^{p_-(Q)-p(x)} \lesssim 1$
for all $x \in Q$.
\item[$(2)$]
$\displaystyle
\left(\frac{\sigma(Q)}{\|\chi_Q\|_{L^{p'(\cdot)}(\sigma)}}\right)^{p_-(Q)}
\le
\sigma(Q).
$
\item[$(3)$]
$\displaystyle
\int_{Q}\sigma(Q)^{p_-(Q)}|Q|^{-p(x)}w(x)dx
\lesssim
\sigma(Q).
$
\end{enumerate}
\end{lemma}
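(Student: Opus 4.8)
All three parts rest on the same bookkeeping with conjugate exponents. Since $1<p_-\le p(\cdot)\le p_+<\infty$, on every cube $Q$ the quantities $p_\pm(Q)-1$, $p_+(Q)-p_-(Q)$ and $(p')_+(Q)-(p')_-(Q)$ are bounded above and below by constants depending only on $p_\pm$; moreover $(p')_-(Q)=\frac{p_+(Q)}{p_+(Q)-1}$ and $(p')_+(Q)=\frac{p_-(Q)}{p_-(Q)-1}$, so that $(p')_+(Q)\bigl(p_-(Q)-1\bigr)=p_-(Q)$ and $\frac{(p')_-(Q)-(p')_+(Q)}{p_-(Q)-p_+(Q)}=\frac{1}{(p_-(Q)-1)(p_+(Q)-1)}$. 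As in the proof of Lemma~\ref{lem:190626-1}, $w\in A_{p(\cdot)}({\mathfrak D})$ is equivalent to $\sigma\in A_{p'(\cdot)}({\mathfrak D})$ with comparable characteristic, so Lemmas~\ref{lem:190410-1} and~\ref{lem:190411-13} and \eqref{eq:191116-1} apply to $\sigma$ with $p$ replaced by $p'$. Abbreviate $N_Q\equiv\|\chi_Q\|_{L^{p'(\cdot)}(\sigma)}$.

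\emph{Part (1).} Lemma~\ref{lem:190411-13} applied to $\sigma\in A_{p'(\cdot)}({\mathfrak D})$ gives $N_Q^{(p')_-(Q)-(p')_+(Q)}\lesssim1$. Raising this to the power $(p_-(Q)-1)(p_+(Q)-1)$, which ranges over a fixed compact subinterval of $(0,\infty)$, and using the identity above, we obtain $N_Q^{p_-(Q)-p_+(Q)}\lesssim1$. For $x\in Q$ the number $p_-(Q)-p(x)$ lies in $[p_-(Q)-p_+(Q),0]$, and a positive real $t$ satisfies $t^{s}\le\max\{1,t^{p_-(Q)-p_+(Q)}\}$ for all $s\in[p_-(Q)-p_+(Q),0]$; applying this with $t=N_Q$ yields $N_Q^{p_-(Q)-p(x)}\lesssim1$, which is (1).

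\emph{Part (2).} Raising both sides to the power $1/p_-(Q)$, the claim is equivalent to $\sigma(Q)^{p_-(Q)-1}\le N_Q^{p_-(Q)}$. By \eqref{eq:191116-1} (for $\sigma$, $p'$) we have $N_Q\le1\iff\sigma(Q)\le1$. If $N_Q\ge1$, Lemma~\ref{lem:190410-1}(2) applied to $\sigma$ on $Q$ gives $\sigma(Q)\le N_Q^{(p')_+(Q)}$; raising to the power $p_-(Q)-1>0$ and using $(p')_+(Q)(p_-(Q)-1)=p_-(Q)$ gives $\sigma(Q)^{p_-(Q)-1}\le N_Q^{p_-(Q)}$. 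If $N_Q<1$, Lemma~\ref{lem:190410-1}(1) gives instead $\sigma(Q)\le N_Q^{(p')_-(Q)}$, hence
\[
\sigma(Q)^{p_-(Q)-1}\le N_Q^{(p')_-(Q)(p_-(Q)-1)}=N_Q^{p_-(Q)}\bigl(N_Q^{p_-(Q)-p_+(Q)}\bigr)^{1/(p_+(Q)-1)}\lesssim N_Q^{p_-(Q)},
\]
the last step by part (1), raising $\lesssim1$ to the bounded positive power $1/(p_+(Q)-1)$. Thus (2) holds (with constant $1$ when $N_Q\ge1$, and with a constant depending only on $p_\pm$ and $[w]_{A_{p(\cdot)}({\mathfrak D})}$ otherwise, which suffices for (3)).

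\emph{Part (3).} Write $\int_Q\sigma(Q)^{p_-(Q)}|Q|^{-p(x)}w(x)\,dx=\sigma(Q)^{p_-(Q)}\int_Q|Q|^{-p(x)}w(x)\,dx$. The defining inequality for $w\in A_{p(\cdot)}({\mathfrak D})$, exactly as in \eqref{eq:190624-1}, reads $\int_Q\bigl(\tfrac{N_Q}{[w]_{A_{p(\cdot)}({\mathfrak D})}|Q|}\bigr)^{p(x)}w(x)\,dx\le1$; writing $|Q|^{-p(x)}=\bigl(\tfrac{N_Q}{[w]_{A_{p(\cdot)}({\mathfrak D})}|Q|}\bigr)^{p(x)}\bigl(\tfrac{[w]_{A_{p(\cdot)}({\mathfrak D})}}{N_Q}\bigr)^{p(x)}$ and estimating $\bigl(\tfrac{[w]_{A_{p(\cdot)}({\mathfrak D})}}{N_Q}\bigr)^{p(x)}\lesssim\max\{N_Q^{-p_-(Q)},N_Q^{-p_+(Q)}\}$ yields $\int_Q|Q|^{-p(x)}w(x)\,dx\lesssim\max\{N_Q^{-p_-(Q)},N_Q^{-p_+(Q)}\}$. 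Hence the left side of (3) is $\lesssim\max\bigl\{(\sigma(Q)/N_Q)^{p_-(Q)},\ \sigma(Q)^{p_-(Q)}N_Q^{-p_+(Q)}\bigr\}$. The first term is $\le\sigma(Q)$ by (2); the second is dominated by the first when $N_Q\ge1$, and when $N_Q<1$ one repeats the $N_Q<1$ computation of part (2) — now using $\sigma(Q)\le N_Q^{(p')_-(Q)}$ and part (1) with the bounded power $p_+(Q)/(p_+(Q)-1)$ — to get $\sigma(Q)^{p_-(Q)-1}\lesssim N_Q^{p_+(Q)}$, i.e. the second term is $\lesssim\sigma(Q)$ as well. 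The main obstacle throughout is the regime $N_Q<1$, where the naive two-sided bounds of Lemma~\ref{lem:190410-1} point against us and one must quantitatively absorb the exponent oscillation $p_+(Q)-p_-(Q)$; the log-H\"older continuity, encapsulated in Lemma~\ref{lem:190411-13} and hence in part (1), is exactly what makes this possible.
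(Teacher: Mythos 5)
Your proposal is correct and follows essentially the paper's own route: part (1) comes from Lemma \ref{lem:190411-13} applied to the dual weight $\sigma\in A_{p'(\cdot)}({\mathfrak D})$ together with the relation between the oscillations of $p$ and $p'$, part (2) from the norm--modular estimates of Lemma \ref{lem:190410-1} split according to $\|\chi_Q\|_{L^{p'(\cdot)}(\sigma)}\ge 1$ or $<1$, and part (3) from the normalized $A_{p(\cdot)}({\mathfrak D})$ inequality as in \eqref{eq:190624-1} combined with (1) and (2). The only deviation — you obtain (2) with an implicit constant rather than constant $1$ in the case $\|\chi_Q\|_{L^{p'(\cdot)}(\sigma)}<1$ — is harmless, since only the $\lesssim$ form is used in (3) and in the later sections, and the paper's own chain in that case likewise requires Lemma \ref{lem:190411-13} (its displayed middle inequality is reversed for norms below $1$), so in substance the two arguments coincide.
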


%\begin{proof}
%See \cite[(5.5)]{CFN12}.
%\end{proof}

%%%%%%%%%%%%%%%%%%%%%%%%%%%%%%%%%%%%%%%%%%%%%%%%%%%%%%%%%%%%%%%%%%5

%\begin{comment}

\begin{proof}
Note that $p_+'=(p')_-$ and $p_-'=(p')_+$, where $(p')_+$ and $(p')_-$ are the supremum and the infimum of $p'(\cdot)$, respectively.
\
\begin{enumerate}
\item[$(1)$]
Since $p(x)-p_-(Q) \lesssim (p')_+(Q)-(p')_-(Q)$
as in \cite[p. 755]{CFN12},
we are in the position of using
Lemma \ref{lem:190411-13}.
\item[$(2)$]
If $\|\chi_Q\|_{L^{p'(\cdot)}(\sigma)} \ge 1$,
then
$\|\chi_Q\|_{L^{p'(\cdot)}(\sigma)} \ge \sigma(Q)^{\frac{1}{(p')_+(Q)}}$ by Lemma \ref{lem:190410-1}.
Hence
\[
\left(\frac{\sigma(Q)}{\|\chi_Q\|_{L^{p'(\cdot)}(\sigma)}}\right)^{p_-(Q)}
\le
\left(\frac{\sigma(Q)}{\sigma(Q)^{\frac{1}{(p')_+(Q)}}}\right)^{p_-(Q)}
=
\left(\frac{\sigma(Q)}{\sigma(Q)^{\frac{1}{p_-'(Q)}}}\right)^{p_-(Q)}
=
\sigma(Q)
\]
again by 
Lemma \ref{lem:190411-13}.
If  $\|\chi_Q\|_{L^{p'(\cdot)}(\sigma)} \le 1$,
then
\[
\sigma(Q)^{\frac{1}{(p')_+(Q)}} \ge \|\chi_Q\|_{L^{p'(\cdot)}(\sigma)} \ge \sigma(Q)^{\frac{1}{(p')_-(Q)}}.
\]
Hence we obtain
\begin{align*}
\left(
\frac{\sigma(Q)}{\|\chi_Q\|_{L^{p'(\cdot)}(\sigma)}}
\right)^{p_-(Q)}
&\le
\left(\|\chi_Q\|_{L^{p'(\cdot)}(\sigma)}{}^{(p')_-(Q)-1}\right)^{p_-(Q)}\\
&\le
\left(\|\chi_Q\|_{L^{p'(\cdot)}(\sigma)}{}^{(p')_+(Q)-1}\right)^{p_-(Q)}\\
&\le
\left(\sigma(Q)^{\frac{(p')_+(Q)-1}{(p')_+(Q)}}\right)^{p_-(Q)}\\
&=
\left(\sigma(Q)^{\frac{p'_-(Q)-1}{p'_-(Q)}}\right)^{p_-(Q)}\\
&=\sigma(Q),
\end{align*}
as required.
\item[$(3)$]
By the definition of $A_{p(\cdot)}^{\rm loc}({\mathfrak D})$, we have
\[
\left\|
\left([w]_{A_{p(\cdot)}^{\rm loc}({\mathfrak D})}|Q|\right)^{-1}
\|\chi_Q\|_{L^{p'(\cdot)}(\sigma)}\chi_Q\right\|_{L^{p(\cdot)}(w)}
\le 1,
\]
or equivalently,
\begin{equation} %\label{eq:190624-1}
\int_Q
\left(\frac{\|\chi_Q\|_{L^{p'(\cdot)}(\sigma)}}{[w]_{A_{p(\cdot)}^{\rm loc}({\mathfrak D})}|Q|}\right)^{p(x)}
w(x)dx \le 1
\end{equation}
for $Q \in {\mathfrak D}$.
From (1) and (2) we deduce
\begin{align*}
\lefteqn{
\int_{Q}\sigma(Q)^{p_-(Q)}|Q|^{-p(x)}w(x)dx
}\\
&=
\left(\frac{\sigma(Q)}{\|\chi_Q\|_{L^{p'(\cdot)}(\sigma)}}\right)^{p_-(Q)}
\int_{Q}\|\chi_Q\|_{L^{p'(\cdot)}(\sigma)}{}^{p_-(Q)-p(x)}
\|\chi_Q\|_{L^{p'(\cdot)}(\sigma)}{}^{p(x)}|Q|^{-p(x)}w(x)dx\\
&\lesssim
\sigma(Q)
\int_{Q}
\|\chi_Q\|_{L^{p'(\cdot)}(\sigma)}{}^{p(x)}|Q|^{-p(x)}w(x)dx\\
&\lesssim 
\sigma(Q).
\end{align*}
\end{enumerate}
\end{proof}

%\end{comment}

%%%%%%%%%%%%%%%%%%%%%%%%%%%%%%%%%%%%%%%%%%%%%%%%%%%%%%%%%%%%%%%%%%%%%%%%

\begin{lemma} \label{lem:190619-2}
Suppose that $w \in A_{p(\cdot)}({\mathfrak D})$.
Let ${\mathcal S}$ be a disjoint collection of cubes in dyadic grid ${\mathfrak D}$.
Then we have
\begin{align*}
\sum_{Q \in {\mathcal S}}
\int_{Q}(e+|x|)^{-K}
\sigma(Q)^{p_-(Q)}
|Q|^{-p(x)}w(x)dx
\lesssim1.
\end{align*}
\end{lemma}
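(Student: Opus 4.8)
The strategy is to dominate the contribution of each $Q\in\mathcal S$ by a quantity whose sum over the pairwise disjoint family $\mathcal S$ telescopes into $\int_{\R^n}(e+|x|)^{-K}\sigma(x)\,dx$, which is finite once $K\gg1$ by Corollary \ref{cor:190411-1}. The telescoping goes through cleanly whenever $(e+|x|)^{-K}$ is essentially constant on $Q$, and the only genuinely delicate cubes are the large ones that sit near the origin.

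Write $x_Q$ for the center of $Q$. First suppose that either $|Q|\le1$, or $|Q|>1$ and $0\notin 2Q$ (the concentric double of $Q$); in both cases a short geometric computation gives $e+|x|\sim e+|x_Q|$ for all $x\in Q$, and hence, by Lemma \ref{lem:190411-1}$(3)$,
\[
\int_Q(e+|x|)^{-K}\sigma(Q)^{p_-(Q)}|Q|^{-p(x)}w(x)\,dx
\;\lesssim\; (e+|x_Q|)^{-K}\,\sigma(Q)
\;\sim\; \int_Q(e+|x|)^{-K}\sigma(x)\,dx .
\]
Summing over all such $Q\in\mathcal S$, disjointness and Corollary \ref{cor:190411-1} give a contribution $\le\int_{\R^n}(e+|x|)^{-K}\sigma(x)\,dx\lesssim1$.

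There remain the cubes $Q\in\mathcal S$ with $|Q|>1$ and $0\in2Q$. Because $\mathfrak D$ is a fixed dyadic grid, in each generation only boundedly many cubes are this close to the origin, and disjointness of $\mathcal S$ then forces the subfamily of such cubes to have cardinality at most some dimensional constant $C_n$. Hence it suffices to bound $\int_Q(e+|x|)^{-K}\sigma(Q)^{p_-(Q)}|Q|^{-p(x)}w(x)\,dx$ by a constant for a single such $Q$. For this I would split $Q$ into $Q\cap B(0,1)$ and the dyadic shells $Q\cap A_j$, $A_j\equiv\{2^{j-1}\le|x|<2^j\}$; on $Q\cap A_j$ one has $e+|x|\sim2^j$, and choosing a cube $S\in\mathfrak D$ with $Q\cap A_j\subseteq S\subseteq Q$ and ${\rm side}(S)\sim2^j$ one writes $|Q|^{-p(x)}=|S|^{-p(x)}(|S|/|Q|)^{p(x)}\le|S|^{-p(x)}(|S|/|Q|)^{p_-}$ and estimates $\int_S|S|^{-p(x)}w(x)\,dx$ through Lemma \ref{lem:190411-1}$(3)$ applied to $S$. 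One is then left with a sum over $j$ to be summed geometrically, after comparing $\sigma(Q)$, $\sigma(S)$ and $w(Q\cap A_j)$ against the reference cube $P$ by means of Lemmas \ref{lem:190411-1}, \ref{lem:190411-2} and the normalization $w(P),\sigma(P)\gtrsim1$.

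This last step is the main obstacle. Here $(e+|x|)^{-K}$ is no longer comparable to a constant on $Q$, and Lemma \ref{lem:190411-1}$(3)$ by itself only delivers the bound $\sigma(Q)$, which is unbounded as $|Q|\to\infty$, so one genuinely has to trade the decay against the growth of $\sigma(Q)$ shell by shell; I expect the final constant to depend on a power of $[w]_{A_{p(\cdot)}(\mathfrak D)}$. I note that for the weights to which this lemma is applied --- the weights $\overline w$ of Lemma \ref{lem:190626-1}, which equal $(\|\chi_I\|_{L^{p(\cdot)}(w)})^{p(\cdot)}$ off the unit cube $I$ and are therefore bounded above and below off $I$ --- one has $\sigma(Q)^{p_-(Q)}|Q|^{-p(x)}w(x)\lesssim1$ on $Q\setminus I$ while the part over $I$ is controlled directly by Lemma \ref{lem:190411-1}$(3)$, so in that case the obstacle disappears.
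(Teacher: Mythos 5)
Your first case (all cubes with $|Q|\le1$, and large cubes with $0\notin 2Q$) is sound and is in substance the same mechanism as the paper's proof: freeze $(e+|x|)^{-K}$ on the cube, apply Lemma \ref{lem:190411-1}(3) to get $\sigma(Q)$, convert back to $\int_Q(e+|x|)^{-K}\sigma(x)\,dx$, and sum using disjointness and Corollary \ref{cor:190411-1}. The gap is in your second case. First, the cardinality claim is false: disjointness of $\mathcal S$ does \emph{not} force the family of cubes with $|Q|>1$ and $0\in 2Q$ to be finite. In dimension one, with $\mathfrak D=\mathcal D_{(1)}$, the right neighbours of the central cubes at generations $k,k+2,k+4,\dots$ (e.g. the intervals $[(2^k+1)/3,(2^{k+2}+1)/3)$) are pairwise disjoint, have side $2^k>1$, and each satisfies $0\in 2Q$; so your reduction to boundedly many cubes collapses. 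Second, even for a single such cube you do not actually prove the bound: you state explicitly that the shell-by-shell trade-off is ``the main obstacle'' and only sketch a plan, and the closing remark that the obstacle disappears for the extended weights $\overline w$ of Lemma \ref{lem:190626-1} does not prove the lemma as stated, which is formulated (and used in Section \ref{s3}, inside the proof of Theorem \ref{thm:main2}) for an arbitrary $w\in A_{p(\cdot)}({\mathfrak D})$.

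What rescues the situation — and what the paper's proof exploits — is the geometry of the specific grid ${\mathfrak D}={\mathcal D}_{(1,\dots,1)}$: at every scale the cube of ${\mathfrak D}$ containing the origin keeps $0$ at distance $\gtrsim$ its side length from the boundary. Consequently \emph{every} $Q\in{\mathfrak D}$ with $0\notin Q$ satisfies ${\rm dist}(0,Q)\gtrsim \ell(Q)$, so $e+|x|$ is comparable to a constant on $Q$ even when $0\in 2Q$; your problematic cubes are not problematic at all once the correct criterion ($0\notin Q$, not $0\notin 2Q$) is used, and at most one cube of the disjoint family can contain $0$. The paper implements this by fixing the cube $P\ni 0$ and its dyadic ancestors $P_l$, sorting the cubes of $\mathcal S$ into the shells $P_l\setminus P_{l-1}$, using $(e+|x|)^{-K}\lesssim 2^{-Kl}$ there, then Lemma \ref{lem:190411-1}(3) and the reverse comparison $2^{-Kl}\sigma(Q)\lesssim\int_Q(e+|x|)^{-K}\sigma(x)\,dx$ together with Corollary \ref{cor:190411-1}; the single possible cube of $\mathcal S$ containing $P$ is set aside separately. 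To repair your argument you should replace the criterion $0\notin 2Q$ by $0\notin Q$ and justify the comparability $e+|x|\sim e+{\rm dist}(0,Q)$ on $Q$ via this property of the shifted grid, and then address the (at most one) cube of $\mathcal S$ containing the origin.
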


\begin{proof}
Let $Q \in {\mathcal S}$.
Then either $Q \supset P_l$ for some $l$
or
$Q$ is included in some $P_l$.
Since the first possibility can occur only for one cube,
we have only to consider the second possibility.
For such a cube $Q$, we let $l$ be the smallest
number such that $Q \subset P_l$.
Then $P_{l-1}$ and $Q$ never intersects
due to the minimality of $l$,
since $Q$ does not contain $P_l$.
Thus, there exists uniquely an integer $l$
such that $Q \subset P_l \setminus P_{l-1}$.

Using Lemma \ref{lem:190411-1} (3) and Corollary \ref{cor:190411-1}, we estimate
\begin{align*}
\lefteqn{
\sum_{Q \in {\mathcal S}}
\int_{Q}(e+|x|)^{-K}
\sigma(Q)^{p_-(Q)}
|Q|^{-p(x)}w(x)dx
}\\
&=
\sum_{l=1}^\infty 
\sum_{Q \in {\mathcal S}, Q \subset P_l \setminus P_{l-1}}
\int_{Q}(e+|x|)^{-K}
\sigma(Q)^{p_-(Q)}
|Q|^{-p(x)}w(x)dx\\
&\lesssim
\sum_{l=1}^\infty 2^{-K l}
\sum_{Q \in {\mathcal S}, Q \subset P_l \setminus P_{l-1}}
\int_{Q}
\sigma(Q)^{p_-(Q)}
|Q|^{-p(x)}w(x)dx\\
&\lesssim
\sum_{l=1}^\infty 2^{-K l}
\sum_{Q \in {\mathcal S}, Q \subset P_l \setminus P_{l-1}}
\sigma(Q)\\
&\lesssim
\sum_{l=1}^\infty 
\sum_{Q \in {\mathcal S}, Q \subset P_l \setminus P_{l-1}}
\int_{Q}\frac{\sigma(x)dx}{(e+|x|)^{K}}\\
&\lesssim 1.
\end{align*}
\end{proof}

The
next lemma will be used in Section \ref{sec Mf2}.

\begin{lemma} \label{lem:190724-1}
Suppose that $w \in A_{p(\cdot)}({\mathfrak D})$.
Let $\{Q_j^k\}_{k \in {\mathbb Z}, j \in J_k}$ be a sparse collection
with the nutshell $\{E_j^k\}_{k \in {\mathbb Z}, j \in J_k}$.
Set
${\mathcal H}_2
\equiv
\left\{
(k,j) : Q_j^k \cap P= \phi, \, \sigma(Q_j^k) \ge 1
\right\}.$
Then,
\[
\sum_{(k,j) \in {\mathcal H}_2}
\int_{E_j^k}
\left(\frac{\|\chi_{Q_j^k}\|_{L^{p'(\cdot)}(\sigma)}}{|Q_j^k|}\right)^{p(x)}\frac{w(x)dx}{(e+|x|)^{K}} 
\lesssim 1.
\]
\end{lemma}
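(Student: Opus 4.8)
The plan is to bound each local integral over $Q_j^k$ by a constant using only the $A_{p(\cdot)}({\mathfrak D})$ condition, to upgrade that constant to $\sigma(Q_j^k)$ via the hypothesis $\sigma(Q_j^k)\ge1$, to extract the decay $2^{-lK}$ by splitting ${\mathcal H}_2$ according to the dyadic annuli around the fixed cube $P$, and to sum by the Carleson-type estimate contained in Lemma~\ref{lem:190619-1}. The starting point is the consequence of the $A_{p(\cdot)}({\mathfrak D})$ condition already used in the proof of Lemma~\ref{lem:190411-1}(3): for every $Q\in{\mathfrak D}$,
\[
\int_Q\left(\frac{\|\chi_Q\|_{L^{p'(\cdot)}(\sigma)}}{[w]_{A_{p(\cdot)}({\mathfrak D})}|Q|}\right)^{p(x)}w(x)\,dx\le1,
\]
so that, absorbing $[w]_{A_{p(\cdot)}({\mathfrak D})}^{p(x)}$ into a constant $C_0$, one gets $\int_Q\left(\|\chi_Q\|_{L^{p'(\cdot)}(\sigma)}/|Q|\right)^{p(x)}w(x)\,dx\le C_0$ for all $Q\in{\mathfrak D}$. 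Applying this with $Q=Q_j^k$, $(k,j)\in{\mathcal H}_2$, and using $\sigma(Q_j^k)\ge1$ yields
\[
\int_{Q_j^k}\left(\frac{\|\chi_{Q_j^k}\|_{L^{p'(\cdot)}(\sigma)}}{|Q_j^k|}\right)^{p(x)}w(x)\,dx\le C_0\le C_0\,\sigma(Q_j^k);
\]
the point is that the a priori large factor $\|\chi_{Q_j^k}\|_{L^{p'(\cdot)}(\sigma)}^{p(x)}$ becomes harmless once it is integrated against $w$.

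Since $Q_j^k\cap P=\emptyset$, the cube $Q_j^k$ contains no ancestor of $P$, so, exactly as in the proof of Lemma~\ref{lem:190619-2}, there is a unique integer $l=l(k,j)\ge1$ with $Q_j^k\subset P_l\setminus P_{l-1}$, where $P_l$ denotes the $l$-th ${\mathfrak D}$-parent of $P$; moreover $(e+|x|)^{-K}\lesssim2^{-lK}$ on $P_l\setminus P_{l-1}$. As $E_j^k\subset Q_j^k$, combining this with the previous bound gives
\[
\int_{E_j^k}\left(\frac{\|\chi_{Q_j^k}\|_{L^{p'(\cdot)}(\sigma)}}{|Q_j^k|}\right)^{p(x)}\frac{w(x)\,dx}{(e+|x|)^K}\lesssim2^{-lK}\,\sigma(Q_j^k).
\]

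To finish, I would group ${\mathcal H}_2$ by the value of $l$. Since $\{Q_j^k\}$ is sparse with nutshells $\{E_j^k\}$ and $\sigma\in A_{p'(\cdot)}({\mathfrak D})\subset A_\infty({\mathfrak D})$, Lemma~\ref{lem:190619-1} applied with $W=\sigma$, $r=2$ and $f=\chi_{P_l}$ gives $\sum_{(k,j):\,Q_j^k\subset P_l}\sigma(Q_j^k)\lesssim\sigma(P_l)$, and, as in the proof of Corollary~\ref{cor:190411-1}, $\sigma(P_l)\lesssim(|P_l|/|P|)^{p_-'}\sim2^{nlp_-'}$. Hence
\[
\sum_{(k,j)\in{\mathcal H}_2}\int_{E_j^k}\left(\frac{\|\chi_{Q_j^k}\|_{L^{p'(\cdot)}(\sigma)}}{|Q_j^k|}\right)^{p(x)}\frac{w(x)\,dx}{(e+|x|)^K}\lesssim\sum_{l=1}^\infty2^{-lK}\sigma(P_l)\lesssim\sum_{l=1}^\infty2^{-l(K-np_-')}\lesssim1
\]
once $K$ is chosen large enough. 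The step that needs the most care, and the one that makes ${\mathcal H}_2$ precisely the right index set, is the inequality $\int_{Q_j^k}\left(\|\chi_{Q_j^k}\|_{L^{p'(\cdot)}(\sigma)}/|Q_j^k|\right)^{p(x)}w\,dx\le C_0\,\sigma(Q_j^k)$: one has to notice that the variable power of $\|\chi_{Q_j^k}\|_{L^{p'(\cdot)}(\sigma)}$ is controlled by the $A_{p(\cdot)}({\mathfrak D})$ inequality, and that $\sigma(Q_j^k)\ge1$ is exactly what turns the resulting constant into a quantity to which the sparse/Carleson summation of Lemma~\ref{lem:190619-1} can be applied. Everything else---the annulus decomposition, the growth of $\sigma(P_l)$, and the Carleson bound---is already in place.
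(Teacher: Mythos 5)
Your argument is correct, and its core is the same as the paper's: both proofs use the modular consequence \eqref{eq:190624-1} of the $A_{p(\cdot)}({\mathfrak D})$ condition to bound $\int_{Q_j^k}\bigl(\|\chi_{Q_j^k}\|_{L^{p'(\cdot)}(\sigma)}/|Q_j^k|\bigr)^{p(x)}w(x)\,dx$ by a constant, and then exploit $\sigma(Q_j^k)\ge1$ to upgrade that constant to $\sigma(Q_j^k)$. The difference lies only in how the tail is summed. The paper observes that for cubes disjoint from $P$ the factor $(e+|x|)^{-K}$ is essentially constant on $Q_j^k$, so $\sup_{Q_j^k}(e+|x|)^{-K}\,\sigma(Q_j^k)\lesssim\int_{Q_j^k}\sigma(x)(e+|x|)^{-K}\,dx$, passes to the nutshells via $\sigma(Q_j^k)\lesssim\sigma(E_j^k)$ (Lemma \ref{lem:190618-1}), and concludes from the disjointness of the $E_j^k$ and Corollary \ref{cor:190411-1}. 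You instead make the annulus decomposition $Q_j^k\subset P_l\setminus P_{l-1}$ explicit, control $\sum_{Q_j^k\subset P_l}\sigma(Q_j^k)\lesssim\sigma(P_l)$ by Lemma \ref{lem:190619-1} with $W=\sigma$ and $f=\chi_{P_l}$, bound $\sigma(P_l)$ by a fixed power of $2^{ln}$ as in the proof of Corollary \ref{cor:190411-1}, and sum the geometric series for $K\gg1$. This is essentially the paper's mechanism unpacked: Corollary \ref{cor:190411-1} is itself proved by your annulus-plus-growth argument, and invoking Lemma \ref{lem:190619-1} is heavier than necessary here (the sparseness property $\sigma(Q_j^k)\lesssim\sigma(E_j^k)$ together with the disjointness of the nutshells already gives $\sum_{Q_j^k\subset P_l}\sigma(Q_j^k)\lesssim\sigma(P_l)$); both routes need $\sigma\in A_\infty({\mathfrak D})$, which the paper also uses implicitly. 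No gaps.
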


%The proof of this lemma is similar to \cite[p.758]{CFN12}.

%%%%%%%%%%%%%%%%%%%%%%%%%%%%%%%%%%%%%
%\begin{comment}

\begin{proof}
By
(\ref{eq:190624-1}), we obtain
\begin{align*}
\sum_{(k,j) \in {\mathcal H}_2}
&\int_{E_j^k}
\left(\frac{\|\chi_{Q_j^k}\|_{L^{p'(\cdot)}(\sigma)}}{|Q_j^k|}\right)^{p(x)}\frac{w(x)dx}{(e+|x|)^{K}}\\
&\le
\sum_{(k,j) \in {\mathcal H}_2}
\sup_{x \in Q_j^k}\frac{1}{(e+|x|)^{K}}
\int_{Q_j^k}
\left(\frac{\|\chi_{Q_j^k}\|_{L^{p'(\cdot)}(\sigma)}}{|Q_j^k|}\right)^{p(x)}w(x)dx\\
&\lesssim
\sum_{(k,j) \in {\mathcal H}_2}
\sup_{x \in Q_j^k}\frac{1}{(e+|x|)^{K}}.
\end{align*}
By Lemma \ref{lem:190618-1},
Corollary \ref{cor:190411-1}
and
the fact that $\sigma(Q_j^k)\ge1$
for any $(k,j) \in {\mathcal H}_2$, we obtain
\begin{align*}
\sum_{(k,j) \in {\mathcal H}_2}
\int_{E_j^k}
\left(\frac{\|\chi_{Q_j^k}\|_{L^{p'(\cdot)}(\sigma)}}{|Q_j^k|}\right)^{p(x)}\frac{w(x)dx}{(e+|x|)^{K}}
&\lesssim
\sum_{(k,j) \in {\mathcal H}_2}
\sup_{x \in Q_j^k}\frac{1}{(e+|x|)^{K}}\, \sigma(Q_j^k)\\
&\lesssim
\sum_{(k,j) \in {\mathcal H}_2}
\int_{Q_j^k}
\frac{\sigma(x)}{(e+|x|)^{K}}
{\rm d}x\\
&\lesssim
\sum_{(k,j) \in {\mathcal H}_2}
\int_{E_j^k}
\frac{\sigma(x)}{(e+|x|)^{K}}
{\rm d}x\\
&\lesssim1.
\end{align*}
\end{proof}

%\end{comment}
%%%%%%%%%%%%%%%%%%%%%%%%%%%%%%%%%%%%%%%%%%%%5

\section{Proof of Theorem \ref{thm:main2}}
\label{s3}
We keep the notation in Lemma \ref{lem:190724-1}
throughout Section \ref{s3}.

Suppose
that
$w \in A_{p(\cdot)}({\mathfrak D})$
and
$f \in L^\infty_{\rm c}({\mathbb R}^n)$
satisfy
$\|f\|_{L^{p(\cdot)}(w)}<1$.
We may assume $f\ge0$ a.e. by replacing $f$ by $|f|$ if necessary.
%Given $Q_1,Q_2 \in {\mathcal D}$
%with $Q_1 \subset Q_2$,
%since $w \in A_{\infty}$, 
%we have
%\[
%\frac{w(Q_1)}{w(Q_2)} \le C
%\left(\frac{|Q_1|}{|Q_2|}\right)^{\frac{1}{p_+}}
%\]
%by \cite[Lemma 3.1]{CFN12}.
We write
$
f_1\equiv f\chi_{\{f>\sigma\}}$
and
$f_2\equiv f-f_1.
$
Here, $\sigma \equiv w^{-\frac{1}{p(\cdot)-1}}$ is the dual weight.
Then we have
\begin{equation} \label{eq 191114-1}
\int_{{\mathbb R}^n}|f_j(x)|^{p(x)}w(x)dx<1
\quad(j=1,2).
\end{equation}
We have only to show that for $j=1,2$
\[
\int_{{\mathbb R}^n}M_{\mathfrak D} f_j(x)^{p(x)}w(x)dx \lesssim 1.
\]

We
form the sparse decomposition of $f_1$ and $f_2$ separately.
The estimates of $f_1$ and $f_2$
will be done independently.
So we suppose that there exists
a sparse family 
$\{Q_j^k\}_{k \in {\mathbb Z}, j \in J_k}$
with the nutshell
$\{E_j^k\}_{k \in {\mathbb Z}, j \in J_k}$
such that
\[
M_{\mathfrak D} f_l\lesssim
\sum_{k=-\infty}^{\infty}\sum_{j \in J_k}m_{Q_j^k}(|f_l|)\chi_{E_j^k}
\quad (l=1,2).
\]
Since the $E_j^k$'s are disjoint,
we have
\begin{align*}
\int_{{\mathbb R}^n}
\left(
M_{\mathfrak D} f_l(x)
\right)^{p(x)}w(x)\,dx
\lesssim
\sum_{k=-\infty}^{\infty}\sum_{j \in J_k}
\int_{E_j^k}m_{Q_j^k}(|f_l|)^{p(x)}
w(x)dx
=:{\rm I}_l
\quad (l=1,2).
\end{align*}

\subsection{The estimate of $M_{\mathfrak D} f_1$}

We will use the sparse decomposition of $M_{\mathfrak D} f_1$:
\[
M_{\mathfrak D} f_1\lesssim
\sum_{k=-\infty}^\infty
\sum_{j \in J_k}
m_{Q_j^k}(|f_1|)\chi_{E_j^k}.
\]
Let $x \in {\mathbb R}^n$.
Note that
$f_1(x)\sigma^{-1}(x) \ge 1$ unless $f_1(x)=0$.
Since $p_-(Q_j^k) \ge 1$,
\begin{align*}
\int_{Q_j^k}(f_1(y)\sigma(y)^{-1})^{\frac{p(y)}{p_-(Q_j^k)}}\sigma(y)dy
&\le
\int_{Q_j^k}(f_1(y)\sigma(y)^{-1})^{p(y)}\sigma(y)dy\\
&=
\int_{Q_j^k}f_1(y)^{p(y)}w(y)dy
\le 1.
\end{align*}
Consequently,
\begin{align*}
\left(
\int_{Q_j^k}(f_1(y)\sigma(y)^{-1})\sigma(y)dy
\right)^{p(x)}
&\le
\left(
\int_{Q_j^k}(f_1(y)\sigma(y)^{-1})^{\frac{p(y)}{p_-(Q_j^k)}}\sigma(y)dy
\right)^{p(x)}\\
&\le
\left(
\int_{Q_j^k}(f_1(y)\sigma(y)^{-1})^{\frac{p(y)}{p_-(Q_j^k)}}\sigma(y)dy
\right)^{p_-(Q_j^k)}.
\end{align*}
Hence, by Lemma \ref{lem:190411-1}(3),
\begin{align*}
{\rm I}_1
&\le
\sum_{k=-\infty}^\infty
\sum_{j \in J_k}
\left(
\frac{1}{\sigma(Q_j^k)}
\int_{Q_j^k}(f_1(y)\sigma(y)^{-1})^{\frac{p(y)}{p_-(Q_j^k)}}\sigma(y)dy
\right)^{p_-(Q_j^k)}\\
& \hspace{5cm}\times
\int_{E_j^k}
\sigma(Q_j^k)^{p_-(Q_j^k)}|Q_j^k|^{-p(x)}w(x)dx\\
&\lesssim
\sum_{k=-\infty}^\infty
\sum_{j \in J_k}
\left(
\frac{1}{\sigma(Q_j^k)}
\int_{Q_j^k}(f_1(y)\sigma(y)^{-1})^{\frac{p(y)}{p_-(Q_j^k)}}\sigma(y)dy
\right)^{p_-(Q_j^k)}
\sigma(Q_j^k)\\
&\le
\sum_{k=-\infty}^\infty
\sum_{j \in J_k}
\left(
\frac{1}{\sigma(Q_j^k)}
\int_{Q_j^k}(f_1(y)\sigma(y)^{-1})^{\frac{p(y)}{p_-}}\sigma(y)dy
\right)^{p_-}
\sigma(Q_j^k).
\end{align*}
Here in the last inequality, we used the H\"older inequality.
Since $\sigma \in A_\infty({\mathfrak D})$,
$\sigma(Q_j^k) \lesssim \sigma(E_j^k)$ 
by virtue of Lemma \ref{lem:190618-1}.
Consequently, thanks to Lemma \ref{lem:190619-1}, we have
\begin{align*}
I_1
%\le C
%\int_{{\mathbb R}^n}
%M_\sigma[(f_1 \sigma^{-1})^{\frac{p(\cdot)}{p_-}}](x)^{p_-}\sigma(x)dx
\lesssim
\int_{{\mathbb R}^n}
\left\{[f_1(x) \sigma^{-1}(x)]^{\frac{p(x)}{p_-}}\right\}^{p_-}\sigma(x)dx
\le 1.
\end{align*}

\subsection{The estimate of $M f_2$} \label{sec Mf2}

We set
\begin{align*}
{\mathcal F}
\equiv
\{(k,j)\,:\,Q_j^k \subset P\}, \quad
{\mathcal G}
\equiv
\{(k,j)\,:\,P \subset Q_j^k\}, \quad
{\mathcal H}
\equiv
\{(k,j)\,:\,P \cap Q_j^k = \emptyset\}.
\end{align*}
Accordingly
we set
\begin{align*}
{\rm I}_{2,{\mathcal A}}
\equiv
\sum_{(k,j) \in {\mathcal A}}
\int_{E_j^k}m_{Q_j^k}(|f_2|)^{p(x)}
w(x)dx \quad (\mathcal{A} \in \{\mathcal{F}, \mathcal{G}, \mathcal{H}\}).
%{\rm I}_{2,{\mathcal G}}&=\sum_{(k,j) \in {\mathcal G}}
%\int_{E_j^k}m_{Q_j^k}(|f_2|)^{p(x)}
%w(x)dx\\
%{\rm I}_{2,{\mathcal H}}&=\sum_{(k,j) \in {\mathcal H}}
%\int_{E_j^k}m_{Q_j^k}(|f_2|)^{p(x)}
%w(x)dx
\end{align*}

\paragraph{Estimate of ${\rm I}_{2,{\mathcal F}}$}

Since $f_2 \sigma^{-1} \le 1$,
we have
\begin{align*}
{\rm I}_{2,{\mathcal F}}
&=
\sum_{(k,j) \in {\mathcal F}}
\int_{E_j^k}
\left(
\frac{1}{|Q_j^k|}
\int_{Q_j^k}
f_2(y)\sigma(y)^{-1}\sigma(y)
{\rm d}y
\right)^{p(x)}
w(x)
{\rm d}x\\
&\le
\sum_{(k,j) \in {\mathcal F}}
\int_{E_j^k}
\sigma(Q_j^k)^{p(x)-p_-(Q_j^k)}
\sigma(Q_j^k)^{p_-(Q_j^k)}
|Q_j^k|^{-p(x)}w(x)dx\\
&\le
\sum_{(k,j) \in {\mathcal F}}
\int_{E_j^k}
(1+\sigma(Q_j^k))^{p(x)-p_-(Q_j^k)}
\sigma(Q_j^k)^{p_-(Q_j^k)}
|Q_j^k|^{-p(x)}w(x)dx\\
&\le
(1+\sigma(P))^{p_+-p_-}
\sum_{(k,j) \in {\mathcal F}}
\int_{E_j^k}
\sigma(Q_j^k)^{p_-(Q_j^k)}
|Q_j^k|^{-p(x)}w(x)dx.
\end{align*}
From Lemmas \ref{lem:190411-1}(3) and \ref{lem:190618-1},
we obtain
\begin{align*}
{\rm I}_{2,{\mathcal F}}&\lesssim
(1+\sigma(P))^{p_+-p_-}
\sum_{(k,j) \in {\mathcal F}}
\sigma(Q_j^k)\\
&\lesssim
(1+\sigma(P))^{p_+-p_-}
\sum_{(k,j) \in {\mathcal F}}
\sigma(E_j^k)
\lesssim
(1+\sigma(P))^{p_+-p_-}\sigma(P)
\lesssim1.
\end{align*}

\paragraph{Estimate of ${\rm I}_{2,{\mathcal G}}$}

We note that $w(Q_j^k) \ge w(P) \ge 1$ and  $\sigma(Q_j^k)\ge \sigma(P) \ge 1$.
Consequently,
from Lemma \ref{lem:190411-2}(1) and (2),
\begin{equation*} %\label{eq:190625-1}
\frac{1}{|Q_j^k|}
\lesssim
\frac{1}{|P|}
\sigma(P)^{\frac{1}{p_\infty'}}
\sigma(Q_j^k)^{-\frac{1}{p_\infty'}}
\lesssim
\sigma(Q_j^k)^{-\frac{1}{p_\infty'}}
\lesssim
\frac{1}{\|\chi_{Q_j^k}\|_{L^{p'(\cdot)}(\sigma)}}.
\end{equation*}
Thus, by H\"older's inequality and (\ref{eq 191114-1}), 
\begin{align*}
m_{Q_j^k}(f_2)
\lesssim
\frac{1}{\|\chi_{Q_j^k}\|_{L^{p'(\cdot)}(\sigma)}}\int_{Q_j^k}f_2(y)dy
&\lesssim 
\frac{\|f_2\|_{L^{p(\cdot)}(w)}\|\chi_{Q_j^k}\|_{L^{p'(\cdot)}(\sigma)}}{\|\chi_{Q_j^k}\|_{L^{p'(\cdot)}(\sigma)}}
\lesssim 1.
\end{align*}
Thus, using Lemma \ref{lem:190411-11}, we have 
\begin{align*}
{\rm I}_{2,{\mathcal G}}
&\lesssim
\sum_{(k,j) \in {\mathcal G}}
\int_{E_j^k}
\left(
m_{Q_j^k}(f_2)
\right)^{p_\infty}w(x)dx
+
\sum_{(k,j) \in {\mathcal G}}
\int_{E_j^k}
\frac{w(x)dx}{(e+|x|)^{K}}\\
&\lesssim
\sum_{(k,j) \in {\mathcal G}}
\int_{E_j^k}
\left(
m_{Q_j^k}(f_2)
\right)^{p_\infty}w(x)dx
+1.
\end{align*}
Therefore, to complete the estimate of 
${\rm I}_{2,{\mathcal G}}$ we only have to show that the first sum is bounded by a constant. 
We calculate
\begin{align*}
\lefteqn{
\sum_{(k,j) \in {\mathcal G}}
\int_{E_j^k}
\left(m_{Q_j^k}(f_2)
\right)^{p_\infty}w(x)dx
}\\
&=
\sum_{(k,j) \in {\mathcal G}}
w(E_j^k)
\left(\frac{\sigma(Q_j^k)}{|Q_j^k|}\right)^{p_\infty}
\left(
\frac{1}{\sigma(Q_j^k)}\int_{Q_j^k}f_2(y)\sigma(y)^{-1}\sigma(y)dy
\right)^{p_\infty}.
\end{align*}
We note that
\[
\sigma(Q_j^k)^{p_\infty-1}
=
\sigma(Q_j^k)^{\frac{p_\infty}{p_\infty'}}
\sim
\left(\|\chi_{Q_j^k}\|_{L^{p'(\cdot)}(\sigma)}\right)^{p_\infty}
\lesssim
\left(\frac{|Q_j^k|}{\|\chi_{Q_j^k}\|_{L^{p(\cdot)}(w)}}\right)^{p_\infty}
\sim
\frac{|Q_j^k|^{p_\infty}}{w(Q_j^k)}
\]
thanks to Lemma \ref{lem:190411-2}(1) 
and the definition of $A_{p(\cdot)}({\mathfrak D})$.
Thus,
\begin{align*}
\lefteqn{
\sum_{(k,j) \in {\mathcal G}}
\int_{E_j^k}
\left(m_{Q_j^k}(f_2)
\right)^{p_\infty}w(x)dx
}\\
&=
\sum_{(k,j) \in {\mathcal G}}
w(E_j^k)
\left(\frac{\sigma(Q_j^k)}{|Q_j^k|}\right)^{p_\infty}
\left(
\frac{1}{\sigma(Q_j^k)}\int_{Q_j^k}f_2(y)\sigma(y)^{-1}\sigma(y)dy
\right)^{p_\infty}\\
&\lesssim
\sum_{(k,j) \in {\mathcal G}}
\sigma(Q_j^k)
%\left(\frac{\sigma(Q_j^k)}{|Q_j^k|}\right)^{p_\infty}
\frac{w(E_j^k)}{w(Q_j^k)}
\left(
\frac{1}{\sigma(Q_j^k)}\int_{Q_j^k}f_2(y)\sigma(y)^{-1}\sigma(y)dy
\right)^{p_\infty}\\
&\lesssim
\sum_{(k,j) \in {\mathcal G}}
\sigma(E_j^k)
%\left(\frac{\sigma(Q_j^k)}{|Q_j^k|}\right)^{p_\infty}
\left(
\frac{1}{\sigma(Q_j^k)}\int_{Q_j^k}f_2(y)\sigma(y)^{-1}\sigma(y)dy
\right)^{p_\infty}\\
&\lesssim
\int_{{\mathbb R}^n}(f_2(x)\sigma(x)^{-1})^{p_\infty}\sigma(x)dx,
\end{align*}
owing to Lemma \ref{lem:190619-1}.
%where we use the boundedness of $M_{\sigma}$.
Since $0 \le f_2 \sigma^{-1} \le 1$,
we have
\begin{align*}
\int_{{\mathbb R}^n}(f_2(x)\sigma(x)^{-1})^{p_\infty}\sigma(x)dx
&\lesssim
\int_{{\mathbb R}^n}(f_2(x)\sigma(x)^{-1})^{p(x)}\sigma(x)dx
+
\int_{{\mathbb R}^n}\frac{\sigma(x)}{(e+|x|)^{K}}dx\\
&\lesssim
\int_{{\mathbb R}^n}f_2(x)^{p(x)}w(x)dx+1
\lesssim 1
\end{align*}
thanks to Lemma \ref{lem:190411-1}.
Consequently,
${\rm I}_{2,{\mathcal G}} \lesssim 1$.

\paragraph{Estimate of ${\rm I}_{3,{\mathcal G}}$}

We set
\begin{align*}
{\mathcal H}_1
\equiv
\{(k,j) \in {\mathcal H}\,:\,\sigma(Q_j^k) \le 1\}, \quad
{\mathcal H}_2
\equiv
\{(k,j) \in {\mathcal H}\,:\,\sigma(Q_j^k)>1\}.
\end{align*}
Accordingly, we consider
\begin{align*}
{\rm I}_{2,{\mathcal H}_l}
\equiv
\sum_{(k,j) \in {\mathcal H}_l}
\int_{E_j^k}m_{Q_j^k}(|f_2|)^{p(x)}
w(x)dx\quad(l=1,2).
\end{align*}
Let
$(k,j) \in {\mathcal H}_1$.
Let
$x_+ \in \overline{Q_j^k}$
satisfy
$p(x_+)=p_+(Q_j^k)$.
Then we have
\begin{align*}
|p_+(Q_j^k)-p(x)|
\le
|p(x)-p_\infty|+|p(x_+)-p_\infty|
\lesssim
\frac{1}{\log(e+|x|)} \quad (x \in Q_j^k).
\end{align*}
The reader also see \cite[(5.14)]{CFN12}.
Consequently, from Lemma \ref{lem:190411-11} we deduce
\begin{align*}
{\rm I}_{2,{\mathcal H}_1}
&\lesssim
\sum_{(k,j) \in {\mathcal H}_1}
\int_{E_j^k}
m_{Q_j^k}(f_2)^{p_+(Q_j^k)}w(x)dx
+
\sum_{(k,j) \in {\mathcal H}_1}
\int_{E_j^k}
\frac{w(x)dx}{(e+|x|)^{K}}\\
&\lesssim
\sum_{(k,j) \in {\mathcal H}_1}
\int_{E_j^k}
m_{Q_j^k}(f_2)^{p_+(Q_j^k)}w(x)dx
+1.
\end{align*}
Note that
\[
\frac{1}{\sigma(Q_j^k)}\int_{Q_j^k}f_2(y)\sigma(y)^{-1}\sigma(y)dy
\le 1
\]
from the definition of $f_2$.
We calculate
\begin{align*}
{\rm I}_{2,{\mathcal H}_1}
&\lesssim
\sum_{(k,j) \in {\mathcal H}_1}
\int_{E_j^k}
\left(
\frac{1}{\sigma(Q_j^k)}\int_{Q_j^k}f_2(y)\sigma(y)^{-1}\sigma(y)dy\right)^{p_+(Q_j^k)}
\left(\frac{\sigma(Q_j^k)}{|Q_j^k|}\right)^{p_+(Q_j^k)}w(x)dx\\
&\quad+1\\
&\le
\sum_{(k,j) \in {\mathcal H}_1}
\int_{E_j^k}
\left(
\frac{1}{\sigma(Q_j^k)}\int_{Q_j^k}f_2(y)\sigma(y)^{-1}\sigma(y)dy\right)^{p_\infty}
\left(\frac{\sigma(Q_j^k)}{|Q_j^k|}\right)^{p_+(Q_j^k)}w(x)dx\\
&\quad
+
\sum_{(k,j) \in {\mathcal H}_1}
\int_{E_j^k}(e+|x|)^{-K}
\left(\frac{\sigma(Q_j^k)}{|Q_j^k|}\right)^{p_+(Q_j^k)}w(x)dx+1.
\end{align*}
Since $\sigma(Q_j^k) \le 1$ and $p(x) \le p_+(Q_j^k)$ for $x \in Q_j^k$,
we have
\begin{align*}
{\rm I}_{2,{\mathcal H}_1}
&\lesssim
\sum_{(k,j) \in {\mathcal H}_1}
\int_{E_j^k}
\left(
\frac{1}{\sigma(Q_j^k)}\int_{Q_j^k}f_2(y)\sigma(y)^{-1}\sigma(y)dy\right)^{p_\infty}
\left(\frac{\sigma(Q_j^k)}{|Q_j^k|}\right)^{p_+(Q_j^k)}w(x)dx\\
&\quad
+
\sum_{(k,j) \in {\mathcal H}_1}
\int_{E_j^k}(e+|x|)^{-K}
\frac{\sigma(Q_j^k)^{p_-(Q_j^k)}}{|Q_j^k|^{p(x)}}w(x)dx+1\\
&\lesssim
\sum_{(k,j) \in {\mathcal H}_1}
\int_{E_j^k}
\left(
\frac{1}{\sigma(Q_j^k)}\int_{Q_j^k}f_2(y)\sigma(y)^{-1}\sigma(y)dy\right)^{p_\infty}
\left(\frac{\sigma(Q_j^k)}{|Q_j^k|}\right)^{p_+(Q_j^k)}w(x)dx\\
&\quad+1
\end{align*}
by virtue of Lemma \ref{lem:190619-2}.
%Here for the last line, we used $\sigma(Q_j^k) \le 1$.
Consequently, we have only to show 
%\[
%\sum_{(k,j) \in {\mathcal H}_1}
%\int_{E_j^k}
%\left(
%\frac{1}{|Q_j^k|}\int_{Q_j^k}f_2(y)dy\right)^{p_\infty}
%\frac{\sigma(Q_j^k)^{p_-(Q_j^k)}}{|Q_j^k|^{p(x)}}w(x)dx
%\le C.
%\]
\[
\sum_{(k,j) \in {\mathcal H}_1}
\int_{E_j^k}
\left(
\frac{1}{\sigma(Q_j^k)}\int_{Q_j^k}f_2(y)\sigma(y)^{-1}\sigma(y)dy\right)^{p_\infty}
\left(\frac{\sigma(Q_j^k)}{|Q_j^k|}\right)^{p_+(Q_j^k)}w(x)dx
\lesssim 1.
\]
In fact, from Lemma \ref{lem:190411-1} (3), we get
\begin{align*}
&\sum_{(k,j) \in {\mathcal H}_1}
\int_{E_j^k}
\left(
\frac{1}{\sigma(Q_j^k)}\int_{Q_j^k}f_2(y)\sigma(y)^{-1}\sigma(y)dy\right)^{p_\infty}
\frac{\sigma(Q_j^k)^{p_-(Q_j^k)}}{|Q_j^k|^{p(x)}}w(x)dx\\
&\le
\sum_{(k,j) \in {\mathcal H}_1}
\left(
\frac{1}{\sigma(Q_j^k)}\int_{Q_j^k}f_2(y)\sigma(y)^{-1}\sigma(y)dy\right)^{p_\infty}
\sigma(Q_j^k).\\
\end{align*}
Thus, using Lemmas \ref{lem:190619-1} and \ref{lem:190619-2},
we have
\begin{align*}
\lefteqn{
\sum_{(k,j) \in {\mathcal H}_1}
\int_{E_j^k}
\left(
\frac{1}{\sigma(Q_j^k)}\int_{Q_j^k}f_2(y)\sigma(y)^{-1}\sigma(y)dy\right)^{p_\infty}
\frac{\sigma(Q_j^k)^{p_-(Q_j^k)}}{|Q_j^k|^{p(x)}}w(x)dx
}\\
&\lesssim
\sum_{(k,j) \in {\mathcal H}_1}
\left(
\frac{1}{\sigma(Q_j^k)}\int_{Q_j^k}f_2(y)\sigma(y)^{-1}\sigma(y)dy\right)^{p_\infty}
\sigma(E^k_j)\\
&\lesssim
\int_{{\mathbb R}^n}(f_2(y)\sigma(y)^{-1})^{p_\infty}\sigma(x)dx\\
&\lesssim
\int_{{\mathbb R}^n}(f_2(y)\sigma(y)^{-1})^{p(x)}\sigma(x)dx
+
\int_{{\mathbb R}^n}\frac{\sigma(x)dx}{(e+|x|)^M}
\lesssim
1.
\end{align*} 
We consider ${\mathcal H}_2$.
By H\"{o}lder's inequality,
\[
\int_{Q_j^k}f_2(y)dy
\lesssim
\|f_2\|_{L^{p(\cdot)}(w)}
\|\chi_{Q_j^k}\|_{L^{p'(\cdot)}(\sigma)}
\le
\|\chi_{Q_j^k}\|_{L^{p'(\cdot)}(\sigma)}.
\]
Consequently,  using Lemmas \ref{lem:190411-11} and \ref{lem:190724-1}, we have
\begin{align*}
\lefteqn{\sum_{(k,j) \in {\mathcal H}_2}
\int_{E_j^k}
\left(
\frac{1}{|Q_j^k|}\int_{Q_j^k}f_2(y)dy\right)^{p(x)}w(x)dx
}\\
&=\sum_{(k,j) \in {\mathcal H}_2}
\int_{E_j^k}\left(
\frac{1}{\|\chi_{Q_j^k}\|_{L^{p'(\cdot)}(\sigma)}}\int_{Q_j^k}f_2(y)dy\right)^{p(x)}
\left(\frac{\|\chi_{Q_j^k}\|_{L^{p'(\cdot)}(\sigma)}}{|Q_j^k|}\right)^{p(x)}w(x)dx\\
&\lesssim
\sum_{(k,j) \in {\mathcal H}_2}
\int_{E_j^k}\left(
\frac{1}{\|\chi_{Q_j^k}\|_{L^{p'(\cdot)}(\sigma)}}\int_{Q_j^k}f_2(y)dy\right)^{p_\infty}
\left(\frac{\|\chi_{Q_j^k}\|_{L^{p'(\cdot)}(\sigma)}}{|Q_j^k|}\right)^{p(x)}w(x)dx\\
&\quad
+\sum_{(k,j) \in {\mathcal H}_2}
\int_{E_j^k}
\left(\frac{\|\chi_{Q_j^k}\|_{L^{p'(\cdot)}(\sigma)}}{|Q_j^k|}\right)^{p(x)}\frac{w(x)dx}{(e+|x|)^{K}}.\\
&\lesssim
\sum_{(k,j) \in {\mathcal H}_2}
\int_{E_j^k}\left(
\frac{1}{\|\chi_{Q_j^k}\|_{L^{p'(\cdot)}(\sigma)}}\int_{Q_j^k}f_2(y)dy\right)^{p_\infty}
\left(\frac{\|\chi_{Q_j^k}\|_{L^{p'(\cdot)}(\sigma)}}{|Q_j^k|}\right)^{p(x)}w(x)dx+1\\
&\equiv
J_1+1.
\end{align*}
Thanks to Lemma \ref{lem:190411-2} applied to the dual exponent,
\[
\left(\frac{\sigma(Q_j^k)}{\|\chi_{Q_j^k}\|_{L^{p'(\cdot)}(\sigma)}}\right)^{p_\infty}
\lesssim
\left(\sigma(Q_j^k)^{1-\frac{1}{p'_\infty}}\right)^{p_\infty}
=
\sigma(Q_j^k).
\]
Thus
we obtain
\begin{align*}
%\lefteqn{
%\sum_{(k,j) \in {\mathcal H}_2}
%\int_{E_j^k}\left(
%\frac{1}{\|\chi_{Q_j^k}\|_{L^{p'(\cdot)}(\sigma)}}\int_{Q_j^k}f_2(y)dy\right)^{p_\infty}
%\left(\frac{\|\chi_{Q_j^k}\|_{L^{p'(\cdot)}(\sigma)}}{|Q_j^k|}\right)^{p(x)}w(x)dx
%}\\
J_1
&=
\sum_{(k,j) \in {\mathcal H}_2}
\int_{E_j^k}\left(
\frac{1}{\sigma(Q_j^k)}\int_{Q_j^k}f_2(y)dy\right)^{p_\infty}
\left(
\frac{\sigma(Q_j^k)}{\|\chi_{Q_j^k}\|_{L^{p'(\cdot)}(\sigma)}}
\right)^{p_\infty}\\
&\quad \times
\left(\frac{\|\chi_{Q_j^k}\|_{L^{p'(\cdot)}(\sigma)}}{|Q_j^k|}\right)^{p(x)}w(x)dx\\
&\le
\sum_{(k,j) \in {\mathcal H}_2}
\int_{E_j^k}\left(
\frac{1}{\sigma(Q_j^k)}\int_{Q_j^k}f_2(y)dy\right)^{p_\infty}%\\
%&\quad \times
\left(\frac{\|\chi_{Q_j^k}\|_{L^{p'(\cdot)}(\sigma)}}{|Q_j^k|}\right)^{p(x)}
\sigma(Q_j^k)
w(x)dx\\
&\lesssim
\sum_{(k,j) \in {\mathcal H}_2}
\left(\frac{1}{\sigma(Q_j^k)}\int_{Q_j^k}f_2(y)dy\right)^{p_\infty}
\sigma(Q_j^k)\\
&\lesssim 1,
\end{align*}
where in the third inequality, we used (\ref{eq:190624-1})
 and Lemmas \ref{lem:190411-11} and \ref{lem:190618-1}.
All together then we obtain the desired result.

\subsection{An equivalent condition on weights}

Finally,
we consider the condition
on which $Q$ in the definition of $w \in A^{\rm loc}_{p(\cdot)}$.
We generalize $w \in A^{\rm loc}_{p(\cdot)}$
as follows:
\begin{definition}
Given an exponent $p(\cdot)\,:\,\mathbb{R}^n \to [1,\infty)$,
a positive number $R \ge 1$ and a weight $w$, 
we say that $w \in A^{{\rm loc},R}_{p(\cdot)}$ if 
$
[w]_{A^{{\rm loc},R}_{p(\cdot)}} 
\equiv 
\sup\limits_{|Q|\le R^n} 
|Q|^{-1} \|\chi_Q\|_{L^{p(\cdot)}(w)}\|\chi_Q\|_{L^{p'(\cdot)}(\sigma)}<\infty,
$
where $\sigma \equiv w^{-\frac{1}{p(\cdot)-1}}$ as before
and the supremum is taken over all cubes $Q \in \mathcal{Q}$
with volume $R^n$.
\end{definition}
Accordingly,
we consider 
the local maximal operator given by
\[
M^{{\rm loc},R}_{}f(x)
\equiv
\sup_{Q \in \mathcal{Q}, |Q|\le R^n} 
\frac{\chi_{Q}(x)}{|Q|}\int_{Q}|f(y)| {\rm d}y
\quad (x \in {\mathbb R}^n)
\]
for a measurable function $f$ and $R \ge 1$.

Similar to Theorem \ref{thm:main},
we can prove the following theorem:
\begin{theorem}\label{thm:main4}
Suppose
that a variable exponent 
$p(\cdot)\,:\,\mathbb{R}^n \to [1,\infty)$
satisfies conditions 
$(\ref{eq:190613-1})$ and 
$(\ref{eq:190613-2})$
and $1<p_- \le p_+<\infty$.
Let $R \ge 1$.
Then given any $w \in A_{p(\cdot)}^{{\rm loc},R}$,
\[
\|M^{{\rm loc},R}f\|_{L^{p(\cdot)}(w)} \le C \|f\|_{L^{p(\cdot)}(w)}.
\]
\end{theorem}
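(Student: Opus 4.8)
The plan is to reduce Theorem \ref{thm:main4} to its case $R=1$, which is exactly Theorem \ref{thm:main}, by a dilation argument. Fix $R\ge 1$, write $\delta_R g(x)\equiv g(Rx)$, and set $p_R(\cdot)\equiv p(R\,\cdot)$, $w_R\equiv\delta_R w$, $\sigma_R\equiv\delta_R\sigma$. First I would record the elementary identities $\sigma_R=w_R^{-1/(p_R(\cdot)-1)}$, $(p_R)'=\delta_R(p')$, $(p_R)_\pm=p_\pm$, and $\chi_{Q'}=\delta_R\chi_{RQ'}$, together with the intertwining relation
\[
\delta_R\circ M^{{\rm loc},R}=M^{\rm loc}\circ\delta_R,
\]
which follows from the change of variables $Q=RQ'$ that puts the cubes of volume $\le R^n$ in bijective correspondence with those of volume $\le 1$.

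Next I would check that $p_R(\cdot)$ still satisfies $(\ref{eq:190613-1})$ and $(\ref{eq:190613-2})$, with constants depending only on $R$ and the ones for $p(\cdot)$. Condition $(\ref{eq:190613-2})$ is immediate, since $\log(e+R|x|)\ge\log(e+|x|)$ for $R\ge 1$, so $p_\infty$ and its constant are unchanged. For $(\ref{eq:190613-1})$ I would split the range of $|x-y|$: on $|x-y|\le\frac1{2R}$ one invokes ${\rm LH}_0$ for $p(\cdot)$ together with the bound $-\log|x-y|\le\frac{\log(2R)}{\log 2}\bigl(-\log(R|x-y|)\bigr)$, while on $\frac1{2R}<|x-y|\le\frac12$ one simply uses $|p_R(x)-p_R(y)|\le p_+-p_-$ and $-\log|x-y|\ge\log 2$. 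Since $1<p_-=(p_R)_-\le(p_R)_+=p_+<\infty$, the hypotheses of Theorem \ref{thm:main} hold for $p_R(\cdot)$.

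The third ingredient is a modular comparison: for any weight $v$, any exponent $q(\cdot)$ with $1\le q_-\le q_+<\infty$, and any measurable $h$,
\[
\|\delta_R h\|_{L^{q_R(\cdot)}(\delta_R v)}\le\|h\|_{L^{q(\cdot)}(v)}\le R^{n/q_-}\,\|\delta_R h\|_{L^{q_R(\cdot)}(\delta_R v)},
\]
which I would deduce from $\int(|\delta_R h(x)|/\lambda)^{q_R(x)}(\delta_R v)(x)\,dx=R^{-n}\int(|h(y)|/\lambda)^{q(y)}v(y)\,dy$ and the inequality $R^{-nq(y)/q_-}\le R^{-n}$, valid since $q(y)\ge q_-$ and $R\ge 1$. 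Applying this with $h=\chi_{RQ}$ to the pair $(p(\cdot),w)$ and to the pair $(p'(\cdot),\sigma)$, and using $|Q|^{-1}=R^n|RQ|^{-1}$, one gets
\[
[w_R]_{A^{\rm loc}_{p_R(\cdot)}}\ \lesssim_R\ \sup_{|RQ|\le R^n}|RQ|^{-1}\|\chi_{RQ}\|_{L^{p(\cdot)}(w)}\|\chi_{RQ}\|_{L^{p'(\cdot)}(\sigma)}=[w]_{A^{{\rm loc},R}_{p(\cdot)}}<\infty,
\]
so $w_R\in A^{\rm loc}_{p_R(\cdot)}$.

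Finally I would chain the estimates: by the modular comparison, the intertwining relation, and Theorem \ref{thm:main} applied to the exponent $p_R(\cdot)$ and the weight $w_R$,
\[
\|M^{{\rm loc},R}f\|_{L^{p(\cdot)}(w)}\ \sim_R\ \|\delta_R M^{{\rm loc},R}f\|_{L^{p_R(\cdot)}(w_R)}=\|M^{\rm loc}(\delta_R f)\|_{L^{p_R(\cdot)}(w_R)}\ \lesssim\ \|\delta_R f\|_{L^{p_R(\cdot)}(w_R)}\ \sim_R\ \|f\|_{L^{p(\cdot)}(w)}.
\]
The only step requiring genuine care is the stability of ${\rm LH}_0$ under dilation, because the naive comparison of moduli of continuity runs the wrong way near $|x-y|\approx 1$; this is handled by the range splitting above and the boundedness of $p(\cdot)$, so I expect that to be the main, though still routine, obstacle, everything else being bookkeeping with the modular.
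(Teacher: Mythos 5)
Your rescaling argument is correct, but it is a genuinely different route from the paper's. The paper gives no separate proof of Theorem \ref{thm:main4}: it asserts that one can rerun the proof of Theorem \ref{thm:main} with the threshold $|Q|\le 1$ replaced by $|Q|\le R^n$, and the short proof implicit in the Proposition that follows is even quicker: since $R\ge1$, trivially $[w]_{A^{\rm loc}_{p(\cdot)}}\le[w]_{A^{{\rm loc},R}_{p(\cdot)}}$, and the pointwise bound $M^{{\rm loc},R}f\le C_m\,(M^{\rm loc})^m f$ with $m=[2R+1]$ lets one simply compose the bounded operator $M^{\rm loc}$ of Theorem \ref{thm:main} $m$ times on $L^{p(\cdot)}(w)$. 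You instead transfer the statement by the dilation $\delta_R$, which forces you to verify the stability of ${\rm LH}_0$ and ${\rm LH}_\infty$ under $x\mapsto Rx$ and the modular scaling $\|\delta_R h\|_{L^{q_R(\cdot)}(\delta_R v)}\le\|h\|_{L^{q(\cdot)}(v)}\le R^{n/q_-}\|\delta_R h\|_{L^{q_R(\cdot)}(\delta_R v)}$; your intertwining identity, the computation $\sigma_R=w_R{}^{-1/(p_R(\cdot)-1)}$, and the transfer of the weight constant ($[w_R]_{A^{\rm loc}_{p_R(\cdot)}}\le R^n[w]_{A^{{\rm loc},R}_{p(\cdot)}}$, since $|RQ'|\le R^n$) all check out, and the step "modular at the norm is $\le1$" is justified because $p_+<\infty$. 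What each approach buys: the composition trick is shorter, needs no hypotheses on how the class of exponents behaves under dilation, and never touches $p(\cdot)$; your rescaling uses Theorem \ref{thm:main} strictly as a black box (no covering/iteration lemma, no revisiting of the sparse argument), at the cost of changing the exponent to $p_R(\cdot)$, so the final constant depends on $R$ also through the ${\rm LH}_0$ constant $\log(2R)$ fed into Theorem \ref{thm:main}. One small correction: on the range $\frac{1}{2R}<|x-y|\le\frac12$ the fact you need is the upper bound $-\log|x-y|<\log(2R)$ (coming from $|x-y|>\frac{1}{2R}$), not the lower bound $-\log|x-y|\ge\log 2$ that you cite; with it the constant $(p_+-p_-)\log(2R)$ closes the ${\rm LH}_0$ verification, and the rest of your proof stands as written.
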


We remark that
the class
$w \in A_{p(\cdot)}^{{\rm loc},R}$
with $R \ge 1$
is independent of $R \ge 1$.
\begin{proposition}
Suppose
that a variable exponent 
$p(\cdot)\,:\,\mathbb{R}^n \to [1,\infty)$
satisfies conditions 
$(\ref{eq:190613-1})$ and 
$(\ref{eq:190613-2})$
and $1<p_- \le p_+<\infty$.
The class
$w \in A_{p(\cdot)}^{{\rm loc},R}$
with $R \ge 1$
is independent of $R \ge 1$.
\end{proposition}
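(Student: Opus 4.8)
The inclusion $A^{{\rm loc},R'}_{p(\cdot)}\subseteq A^{{\rm loc},R}_{p(\cdot)}$ for $1\le R\le R'$ is immediate, since enlarging $R$ only enlarges the family of cubes over which the supremum defining the characteristic constant is taken; in particular $A^{\rm loc}_{p(\cdot)}=A^{{\rm loc},1}_{p(\cdot)}\subseteq A^{{\rm loc},R}_{p(\cdot)}$. The whole content is therefore the reverse inclusion: given $w\in A^{\rm loc}_{p(\cdot)}$ and $R\ge1$, show $w\in A^{{\rm loc},R}_{p(\cdot)}$ together with a bound on $[w]_{A^{{\rm loc},R}_{p(\cdot)}}$ depending only on $[w]_{A^{\rm loc}_{p(\cdot)}}$, $R$, $n$, $p(\cdot)$. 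The plan is to pass through the boundedness of $M^{{\rm loc},R}$ rather than to compare the weighted norms $\|\chi_Q\|_{L^{p(\cdot)}(w)}$ on nearby cubes of comparable size directly; the latter would require a doubling-type estimate for the \emph{local} class, which is not in our toolbox, whereas boundedness of $M^{{\rm loc},1}$ on $L^{p(\cdot)}(w)$ is already at hand through Theorem \ref{thm:main}.

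\emph{Step 1: a purely geometric domination.} For $\lambda>0$ put $D_\lambda f:=f(\lambda\,\cdot)$; a change of variables gives $M^{{\rm loc},a}\circ D_\lambda=D_\lambda\circ M^{{\rm loc},\lambda a}$ for every $a>0$. Applying $D_R$ to the pointwise inequality $M^{\rm loc}g\le C(M^{\rm loc}_{6^{-1}})^7 g$ recorded in the introduction (here $M^{\rm loc}=M^{{\rm loc},1}$ and $M^{\rm loc}_{6^{-1}}=M^{{\rm loc},6^{-1}}$) yields $M^{{\rm loc},R}f\le C(M^{{\rm loc},R/6})^7 f$; iterating $m:=\lceil\log_6 R\rceil$ times and using $M^{{\rm loc},R/6^m}\le M^{{\rm loc},1}$ once $R/6^m\le1$, we obtain
\[
M^{{\rm loc},R}f(x)\le C_{R,n}\,(M^{{\rm loc},1})^{k}f(x)\qquad(x\in\mathbb{R}^n),\quad k:=7^m .
\]
\emph{Step 2: boundedness of $M^{{\rm loc},R}$.} Since $w\in A^{\rm loc}_{p(\cdot)}$, Theorem \ref{thm:main} gives $\|M^{{\rm loc},1}h\|_{L^{p(\cdot)}(w)}\le C\|h\|_{L^{p(\cdot)}(w)}$; composing $k$ times and invoking Step 1, $\|M^{{\rm loc},R}f\|_{L^{p(\cdot)}(w)}\le C'\|f\|_{L^{p(\cdot)}(w)}$ with $C'=C'(R,n,w,p(\cdot))$.

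\emph{Step 3: recovering the $A^{{\rm loc},R}_{p(\cdot)}$ condition.} Fix a cube $Q$ with $|Q|\le R^n$. For any $g\ge0$ with $\operatorname{supp}g\subseteq Q$ one has $M^{{\rm loc},R}g\ge|Q|^{-1}\bigl(\int_Q g\bigr)\chi_Q$, hence by Step 2
\[
|Q|^{-1}\Bigl(\int_Q g\Bigr)\|\chi_Q\|_{L^{p(\cdot)}(w)}\le\|M^{{\rm loc},R}g\|_{L^{p(\cdot)}(w)}\le C'\|g\|_{L^{p(\cdot)}(w)}.
\]
Taking the supremum over all such $g$ with $\|g\|_{L^{p(\cdot)}(w)}\le1$ and using that the associate space of $L^{p(\cdot)}(w)$ is $L^{p'(\cdot)}(\sigma)$ — equivalently, $\sup\{\int_Q g:\ g\ge0,\ \operatorname{supp}g\subseteq Q,\ \|g\|_{L^{p(\cdot)}(w)}\le1\}\sim\|\chi_Q\|_{L^{p'(\cdot)}(\sigma)}$, valid under $1<p_-\le p_+<\infty$ and obtained from the unweighted norm-conjugate formula via the substitution $g\mapsto g\,w^{1/p(\cdot)}$ — we get $|Q|^{-1}\|\chi_Q\|_{L^{p(\cdot)}(w)}\|\chi_Q\|_{L^{p'(\cdot)}(\sigma)}\lesssim C'$. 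Since $Q$ was arbitrary with $|Q|\le R^n$, this shows $w\in A^{{\rm loc},R}_{p(\cdot)}$ with $[w]_{A^{{\rm loc},R}_{p(\cdot)}}\lesssim_{R,n}C'$, which finishes the proof.

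The only non-formal point is Step 3: the associate-space identification in the weighted variable-exponent setting. It reduces to the standard unweighted statement (for $1\le p_-\le p_+<\infty$) by the substitution above, but it is the step that must be made precise. The bookkeeping in Step 1 and the monotonicity/positive-homogeneity of $M^{{\rm loc},a}$ used when composing inequalities are routine, and invoking Theorem \ref{thm:main} creates no circularity, since its proof nowhere appeals to this proposition.
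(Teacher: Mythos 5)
Your proof is correct and follows essentially the same route as the paper: dominate $M^{{\rm loc},R}$ pointwise by a finite composition of $M^{\rm loc}$ (the paper uses an $m$-fold composition with $m=[2R+1]$, you derive it by dilating the $M^{\rm loc}\le C(M^{\rm loc}_{6^{-1}})^7$ inequality), deduce boundedness of $M^{{\rm loc},R}$ on $L^{p(\cdot)}(w)$ from Theorem \ref{thm:main}, and conclude $w\in A^{{\rm loc},R}_{p(\cdot)}$ from the necessity of the weight condition. The only difference is that you make explicit (via the associate-space/duality argument in Step 3) the necessity step that the paper leaves implicit.
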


\begin{proof}
Let $w \in A_{p(\cdot)}^{\rm loc}$.
Then
if we let $m\equiv[2R+1]$, then
$M^{{\rm loc},R}f(x)
\le 
C_m
(M^{\rm loc})^m f(x)$
for any measurable function $f$,
where
$(M^{\rm loc})^m$ denotes
the $m$-fold composition of $M^{\rm loc}$.
Consequently,
$M^{{\rm loc},R}$
is bounded on $L^{p(\cdot)}(w)$.
Thus,
$w \in A_{p(\cdot)}^{{\rm loc},R}$.
\end{proof}

\section{Proof of Theorem \ref{thm:main3}}
\label{s4}

Thanks to Lemma \ref{lem:190627-11},
we have
\[
\|M^{\rm loc}_{{\mathfrak D}}f\|_{L^{p(\cdot)}(w)}
\sim
\left(
\sum_{Q \in {\mathcal D}_{0,(1,1,\ldots,1)}}
\left(\|(M^{\rm loc}_{{\mathfrak D}}f)\chi_Q\|_{L^{p(\cdot)}(w)}\right)^{p_\infty}
\right)^{\frac1{p_\infty}}.
\]
Since
$(M^{\rm loc}_{{\mathfrak D}}f)\chi_Q
=M^{\rm loc}_{{\mathfrak D}}[f\chi_Q]
=(M_{{\mathfrak D}}[f\chi_Q])\chi_Q$
for any $Q \in {\mathcal D}_{0,(1,1,\ldots,1)}$,
we can use Theorem \ref{thm:main2} to have
\[
\|M^{\rm loc}_{{\mathfrak D}}f\|_{L^{p(\cdot)}(w)}
\lesssim
\left(
\sum_{Q \in {\mathcal D}_{0,(1,1,\ldots,1)}}
\left(\|f\chi_Q\|_{L^{p(\cdot)}(w)}\right)^{p_\infty}
\right)^{\frac1{p_\infty}}.
\]
Using Lemma  \ref{lem:190627-11} once again,
we obtain the desired result.

\section{Application -- the weighted vector-valued maximal inequality} \label{s5}

Finally, as an application, we consider the weighted vector-valued inequality for $M^{\rm loc}$.
Once Theorem \ref{thm 191031-2} is proved, Theorem \ref{thm 191031-3} follows immediately
from Proposition \ref{lem 191031-1}.
So, we concentrate Theorem \ref{thm 191031-2}.
To this end, we use extrapolation for $A^{\rm loc}_{p(\cdot)}$.
We prepare two lemmas.

\begin{lemma} \label{lem 191023-1}
Let $w_0, w_1 \in A^{\rm loc}_1$ and $1<p<\infty$. Then, $w=w_0w_1^{1-p} \in A^{\rm loc}_p$.

\begin{proof}
The proof is analogous to the corresponding assertion to $A_1$ and $A_p$.
Here for the sake of convenience, we supply the proof.
Fix a cube $Q$ with $|Q|\le1$. Then,
\begin{align*}
&\frac{1}{|Q|} \int_{Q} w_0(x)w_1(x)^{1-p} {\rm d}x
\left(
\frac{1}{|Q|} \int_{Q} w_0(x)^{1-p'}w_1(x) {\rm d}x
\right)^{p-1}\\
&\lesssim
\frac{1}{|Q|} \int_{Q} w_0(x)
\left(
\frac{1}{|Q|} \int_{Q} w_1(y) {\rm d}y
\right)^{1-p}
{\rm d}x\\
&\qquad
\times\left(
\frac{1}{|Q|} \int_{Q} w_1(x) {\rm d}x
\right)^{p-1}
\left(
\frac{1}{|Q|} \int_{Q} w_0(y){\rm d}y
\right)^{-1}\\
&=1.
\end{align*} 
Thus, $[w]_{A^{\rm loc}_p} \lesssim 1$ so that $w \in A^{\rm loc}_p$.
\end{proof}
\end{lemma}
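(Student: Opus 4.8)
The plan is to localize the classical factorization $A_1 \cdot A_1^{1-p} \subset A_p$; the only genuinely ``local'' ingredient is that every average is taken over a cube $Q$ with $|Q| \le 1$, which is exactly the range on which the hypotheses $w_0, w_1 \in A_1^{\rm loc}$ give information. Throughout I write $m_Q(g) \equiv |Q|^{-1}\int_Q g(x)\,dx$, and I use that $w_i \in A_1^{\rm loc}$ forces $w_i(y) \gtrsim m_Q(w_i)$ for a.e.\ $y \in Q$ whenever $|Q| \le 1$, with implied constant $[w_i]_{A_1^{\rm loc}}$.

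First I would settle the exponent bookkeeping. Since $p' = p/(p-1)$, one has $1-p' = -1/(p-1)$, hence $(1-p)(1-p') = 1$ and $(1-p')(p-1) = -1$. Consequently $w^{1-p'} = w_0^{1-p'}\,w_1^{(1-p)(1-p')} = w_0^{1-p'} w_1$, so for a cube $Q$ with $|Q| \le 1$ the local $A_p$ expression for $w$ is (a fixed power of)
\[
m_Q\!\bigl(w_0 w_1^{1-p}\bigr)\,m_Q\!\bigl(w_0^{1-p'} w_1\bigr)^{p-1}.
\]

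Then I would bound the two averages separately, each with one $A_1^{\rm loc}$ weight. Since $1-p < 0$ and $w_1(y) \gtrsim m_Q(w_1)$ a.e.\ on $Q$, we get $w_1(y)^{1-p} \lesssim m_Q(w_1)^{1-p}$, hence $m_Q(w_0 w_1^{1-p}) \lesssim m_Q(w_1)^{1-p} m_Q(w_0)$; symmetrically, $1-p' < 0$ and $w_0 \in A_1^{\rm loc}$ give $m_Q(w_0^{1-p'} w_1) \lesssim m_Q(w_0)^{1-p'} m_Q(w_1)$. Multiplying, with the second estimate raised to the power $p-1$, the net power of $m_Q(w_0)$ is $1 + (1-p')(p-1) = 0$ and that of $m_Q(w_1)$ is $(1-p) + (p-1) = 0$, so the whole expression is $\lesssim 1$ uniformly over cubes with $|Q| \le 1$. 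Taking the supremum gives $[w]_{A_p^{\rm loc}} < \infty$, i.e.\ $w \in A_p^{\rm loc}$.

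I do not expect a genuine obstacle: this is a verbatim localization of the standard argument. The only points requiring care are the exponent identities $(1-p)(1-p') = 1$ and $(1-p')(p-1) = -1$, and the observation that one needs the \emph{lower} bound $w_i \gtrsim m_Q(w_i)$ coming from $A_1^{\rm loc}$ (not an upper bound), precisely because the factors carrying $w_0$ and $w_1$ enter with \emph{negative} exponents and so must be controlled from above.
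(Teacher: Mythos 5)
Your argument is correct and is essentially the same as the paper's proof: both bound $w_1^{1-p}$ and $w_0^{1-p'}$ from above on $Q$ (for $|Q|\le 1$) via the $A_1^{\rm loc}$ lower bounds $w_i \gtrsim m_Q(w_i)$ and the negativity of the exponents, after which the powers of $m_Q(w_0)$ and $m_Q(w_1)$ cancel exactly. Your explicit exponent bookkeeping $(1-p)(1-p')=1$, $(1-p')(p-1)=-1$ is just a spelled-out version of the cancellation the paper leaves implicit.
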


Let us conclude the proof of Theorem \ref{thm 191031-2}.

Let $w \in A^{\rm loc}_{p(\cdot)}$ and $(f, g) \in \mathcal{F}$ with $\|f\|_{L^{p(\cdot)}(w)}<\infty$.
We may assume that $\|f\|_{L^{p(\cdot)}(w)}>0$ and $0<\|g\|_{L^{p(\cdot)}(w)}<\infty$. 
Set
\[
h_1 
\equiv
\frac{f}{\|f\|_{L^{p(\cdot)}(w)}}+\frac{g}{\|g\|_{L^{p(\cdot)}(w)}}.
\]
Then, we see that $h_1 \in L^{p(\cdot)}(w)$ and $\|h_1\|_{L^{p(\cdot)}(w)} \le 2$.
We define the operator $\mathcal{R}$ by
\[
\mathcal{R}h(x)
\equiv
\sum_{k=0}^\infty 
\frac{\left(M^{\rm loc}\right)^kh(x)}{2^k\|M^{\rm loc}\|^k_{\mathcal{B}(L^{p(\cdot)}(w))}}
\quad (x \in \mathbb{R}^n)
\]
for $h \in L^{p(\cdot)}(w)$.
Then, we can show that
\begin{itemize}
\item[(i)]
for all $x \in \mathbb{R}^n$, $|h(x)|\le \mathcal{R}h(x)$,

\item[(ii)]
$\|\mathcal{R}h\|_{L^{p(\cdot)}(w)} \le 2 \|h\|_{L^{p(\cdot)}(w)}$,

\item[(iii)]
$\mathcal{R}h \in A^{\rm loc}_1$ 
with $[\mathcal{R}h]_{A^{\rm loc}_1} \le 2\|M^{\rm loc}\|_{\mathcal{B}(L^{p(\cdot)}(w))}$.
\end{itemize}
Define $M'h \equiv w^{-1}\cdot M^{\rm loc}(hw)$. 
Note that if $\sigma=w^{-\frac{1}{p(\cdot)-1}} \in A^{\rm loc}_{p(\cdot)}$, 
then $M^{\rm loc}$ is bounded on $L^{p'(\cdot)}(\sigma)$
so that we see that $M'$ is bounded on $L^{p'(\cdot)}(w)$.
In fact,
\begin{align*}
\|M'h\|_{L^{p'(\cdot)}(w)}
&=
\|
w^{-1}\cdot M^{\rm loc}(hw) w^{\frac{1}{p'(\cdot)}}
\|_{L^{p'(\cdot)}}\\
&=
\|
M^{\rm loc}(hw) \cdot \sigma^{\frac{1}{p'(\cdot)}}
\|_{L^{p'(\cdot)}}\\
&=
\|
M^{\rm loc}(hw)
\|_{L^{p'(\cdot)}(\sigma)}
\lesssim
\|hw\|_{L^{p'(\cdot)}(\sigma)}
=\|h\|_{L^{p'(\cdot)}(w)}.
\end{align*}
Moreover, define
\[
\mathcal{R}'h(x)
\equiv
\sum_{k=0}^\infty 
\frac{\left(M'\right)^kh(x)}{2^k\|M'\|^k_{\mathcal{B}(L^{p'(\cdot)}(w))}}
\quad (x \in \mathbb{R}^n)
\]
for $h \in L^{p'(\cdot)}$.
Then, we also have
\begin{itemize}
\item[(i)]
for all $x \in \mathbb{R}^n$, $|h(x)|\le \mathcal{R'}h(x)$,

\item[(ii)]
$\|\mathcal{R'}h\|_{L^{p'(\cdot)}(w)} \le 2 \|h\|_{L^{p'(\cdot)}(w)}$,

\item[(iii)]
$(\mathcal{R'}h)w \in A^{\rm loc}_1$ 
with $[(\mathcal{R}'h)w]_{A^{\rm loc}_1} 
\le 2\|M'\|_{\mathcal{B}(L^{p'(\cdot)}(w))}$.
\end{itemize}
Fix $f \in L^{p(\cdot)}(w)$. 
Then, $fw^{\frac{1}{p(\cdot)}} \in L^{p(\cdot)}$.
Thus, by duality there exists a non-negative function $h \in L^{p'(\cdot)}$ with $\|h\|_{L^{p'(\cdot)}}=1$ 
such that
\begin{align*}
&\|f\|_{L^{p(\cdot)}(w)}\\
&\lesssim
\int_{\mathbb{R}^n} f(x)h(x)w(x)^{\frac{1}{p(x)}} {\rm d}x\\ 
&\le
\int_{\mathbb{R}^n} 
f(x)\mathcal{R}h_1(x)^{-\frac{1}{p'_0}}\mathcal{R}h_1(x)^{\frac{1}{p'_0}}
\mathcal{R}'\left[hw^{-\frac{1}{p'(\cdot)}}\right](x)^{\frac{1}{p_0}}
\mathcal{R}'\left[hw^{-\frac{1}{p'(\cdot)}}\right](x)^{\frac{1}{p'_0}}w(x)
{\rm d}x\\
&\le
\left(
\int_{\mathbb{R}^n} 
f(x)^{p_0} \mathcal{R}h_1(x)^{1-p_0}
\mathcal{R}'\left[hw^{-\frac{1}{p'(\cdot)}}\right](x)w(x)
{\rm d}x \right)^{\frac{1}{p_0}}\\
&\qquad \times
\left(
\int_{\mathbb{R}^n} 
\mathcal{R}h_1(x)
\mathcal{R}'\left[hw^{-\frac{1}{p'(\cdot)}}\right](x)w(x)
{\rm d}x \right)^{\frac{1}{p'_0}}\\
&\equiv I_1\times I_2.
\end{align*}

We estimate $I_1$.
Since $\mathcal{R}h_1, \mathcal{R}'\left[hw^{-\frac{1}{p'(\cdot)}}\right]w \in A^{\rm loc}_1$, 
according to Lemma \ref{lem 191023-1}, we have
\[
\left(\mathcal{R}h_1\right)^{1-p_0}
\left(\mathcal{R}'\left[hw^{-\frac{1}{p'(\cdot)}}\right]w\right) \in A_{p_0}.
\]
Thus, by the assumption and H\"older's inequality, we have
\begin{align*}
I_1^{p_0}
&\le
\int_{\mathbb{R}^n} 
g(x)^{p_0} \mathcal{R}h_1(x)^{1-p_0}
\mathcal{R}'\left[hw^{-\frac{1}{p'(\cdot)}}\right](x)w(x)
{\rm d}x\\
&\le
\int_{\mathbb{R}^n} 
g(x)^{p_0} 
\left(\frac{g(x)}{\|g\|_{L^{p(\cdot)}(w)}}\right)^{1-p_0}
\mathcal{R}'\left[hw^{-\frac{1}{p'(\cdot)}}\right](x)w(x)
{\rm d}x\\
&=
\|g\|_{L^{p(\cdot)}(w)}^{p_0-1}
\int_{\mathbb{R}^n} 
g(x)
\mathcal{R}'\left[hw^{-\frac{1}{p'(\cdot)}}\right](x)w(x)
{\rm d}x\\
&=
\|g\|_{L^{p(\cdot)}(w)}^{p_0-1}
\|g\|_{L^{p(\cdot)}(w)}
\|\mathcal{R}'\left[hw^{-\frac{1}{p'(\cdot)}}\right]\|_{L^{p'(\cdot)}(w)}\\
&\lesssim
\|g\|_{L^{p(\cdot)}(w)}^{p_0}
\|hw^{-\frac{1}{p'(\cdot)}}\|_{L^{p'(\cdot)}(w)}\\
&=
\|g\|_{L^{p(\cdot)}(w)}^{p_0}.
\end{align*}

It remains to estimate $I_2$. 
Using H\"older's inequality, we have
\begin{align*}
I_2^{p'_0}
&\lesssim
\left\|(\mathcal{R}h_1)w^{\frac{1}{p(\cdot)}}\right\|_{L^{p(\cdot)}}
\left\|{\mathcal R}'[hw^{-\frac{1}{p'(\cdot)}}]\cdot w^{\frac{1}{p'(\cdot)}}\right\|_{L^{p'(\cdot)}}\\
&=
\left\|\mathcal{R}h_1\right\|_{L^{p(\cdot)}(w)}
\left\|{\mathcal R}'[hw^{-\frac{1}{p'(\cdot)}}]\right\|_{L^{p'(\cdot)}(w)}\\
&\lesssim
\left\|h_1\right\|_{L^{p(\cdot)}(w)}
\left\|hw^{-\frac{1}{p'(\cdot)}}\right\|_{L^{p'(\cdot)}(w)}\\
&=
\left\|h_1\right\|_{L^{p(\cdot)}(w)}
\left\|h\right\|_{L^{p'(\cdot)}}
\sim1.
\end{align*}
If we combine these two estimates, we obtain the desired result.

%\begin{proof}[Proof of Theorem \ref{thm 191031-3}]

%As a consequence of Lemma \ref{lem 191031-1} and Theorem \ref{thm 191031-2}, 
%we obtain (\ref{eq 191031-4}).

%\end{proof}

\section*{Acknowledgement}
The second author is supported 
by Grant-in-Aid for Scientific Research (C) (19K03546), the Japan Society 
for the Promotion of Science and People’s Friendship University of Russia.

\bibliographystyle{amsplain}

\begin{thebibliography}{10}

\bibitem{AnJo80}
K.F.~Andersen and R.T.~John,
\textit{Weighted inequalities
for vecter-valued maximal functions and singular integrals},
Studia Math., {\bf 69} (1980), 19--31.

\bibitem{CFMP06}
D. Cruz-Uribe, SFO, A. Fiorenza, J. M. Martell and C. P\'{e}rez, 
\textit{The boundedness of classical operators
on variable $L^p$ spaces}, 
Ann. Acad. Sci. Fenn. Math. {\bf 31} (2006), 239--264.

\bibitem{CFN12}
D. Cruz-Uribe. SFO, A. Fiorenza, C. J. Neugebaucer,
\textit{Weighted norm inequalities for the maximal operator on variable Lebesgue spaces},
J. Math. Anal. Appl. {\bf 394} (2012), 744--760.

\bibitem{CW17}
D. Cruz-Uribe and L.-A. Wang, 
\textit{Extrapolation and weighted norm inequalities in the variable Lebesgue spaces},
Trans. Amer. Math. Soc. {\bf 369} (2017), no. 2, 1205--1235. 

\bibitem{Grafakos14}
L. Grafakos,
\textit{Classical Fourier Analysis},
Texts in Mathmatics, Springer, New York, Third edition {\bf 249} (2014).


\bibitem{Hasto2009}
P.~H\"{a}st\"{o}, 
\textit{Local-to-global results in variable exponent spaces}, 
Math. Res. Letters {\bf 16} (2009), no.~2, 263--278.

\bibitem{Ho16}
K.-P. Ho,
\textit{Singular integral operators, John--Nirenberginequalities and Triebel--Lizorkin type spaces on weighted Lebesgue spaces with variable exponets},
Rev. Un. Mat. Argentina, Vol. 57, No. 1 (2016), 85--101.

\bibitem{HyPe13}
T. Hyt\"{o}nen and
C. Perez,
\textit{Sharp weighted bounds involving 
$A_\infty$},
Anal. PDE {\bf 6} (2013), no. 4, 777--818. 

\bibitem{LeNa18}
A.K. Lerner and F. Nazarov,
\textit{Intuitive dyadic calculus: the basics}, Expo.Math. (2018),
to appear.

\bibitem{NaSa12}
E. Nakai
and Y. Sawano,
\textit{Hardy spaces with variable exponents and generalized Campanato spaces},
J. Funct. Anal. {\bf 262} (2012), no. 9, 3665--3748.


\bibitem{NoSa12}
T. Noi and
Y. Sawano, 
Complex interpolation of Besov spaces
and Triebel--Lizorkin spaces
with variable exponents.
J. Math. Anal. Appl. {\bf 387}, 676--690 (2012)

\bibitem{Rychkov01}
V. S. Rychkov,
\textit{Littlewood--Paley theory and function spaces with $A_{p}^{\rm loc}$ weights,}
Math. Nachr. {\bf 224} (2001), 145--180.

\bibitem{Tanaka10}
H.~Tanaka,
\textit{Morrey spaces and fractional operators},
J. Aust. Math. Soc. {\bf 88} (2010), no. 2, 247--259.

\bibitem{Tang12}
L. Tang,
\textit{Weighted local Hardy spaces and their applications},
Illinois J. Math. {\bf 56} (2012), no. 2, 453--495.

\end{thebibliography}

\end{document}